\documentclass[11pt,a4paper]{article}
\usepackage[USenglish]{babel}
\usepackage{booktabs}
\usepackage[T1]{fontenc}
\usepackage[utf8]{inputenc}
\usepackage{lmodern}
\usepackage{fullpage}
\usepackage{microtype}
\usepackage{csquotes}
\usepackage{authblk}
\usepackage{xcolor}
\usepackage{graphicx}
\usepackage{hyperref}
\usepackage{float}
\usepackage{array}
\usepackage{subfigure}
\usepackage[mathscr]{euscript}
\usepackage{enumitem}
\usepackage{amsmath}
\usepackage{mathtools}
\usepackage{amsthm}
\usepackage{thmtools}
\usepackage{thm-restate}
\usepackage{dsfont}
\usepackage{amssymb}
\usepackage[noabbrev]{cleveref}
\usepackage{crossreftools}
\usepackage{framed}
\usepackage{soul}
\usepackage{cite}
%%%
\declaretheorem[style=plain,parent=section,title=Theorem,refname={Theorem,Theorems}]{theo}
\declaretheorem[style=plain,sibling=theo,title=Proposition,refname={Proposition,Propositions}]{prop}
\declaretheorem[style=plain,sibling=theo,title=Corollary,refname={Corollary,Corollaries}]{cor}
\declaretheorem[style=plain,sibling=theo,title=Lemma,refname={Lemma,Lemmas}]{lem}
\declaretheorem[style=definition,sibling=theo,title=Definition,refname={Definition,Definitions}]{defin}
\declaretheorem[style=plain,parent=section,title=Remark,refname={Remark,Remarks}]{rem}

\newlist{myenum}{enumerate}{1}
\setlist[myenum]{label={\bf\roman*)},ref={\bf(\roman*)}}
\newlist{mylist}{itemize}{1}
\setlist[mylist]{label=\textbullet}
\newlist{Hassum}{enumerate}{1}
\setlist[Hassum]{label={\bf(H\arabic*)}}
\crefname{Hassumi}{condition}{conditions}

\crefrangelabelformat{Hassumi}{#3#1--#2#4}
\newcommand{\onlyref}[1]{\labelcref{#1}}

%%%%%

\DeclareMathOperator*{\amin}{arg\,min}

\newcommand{\email}[2][]{\textsuperscript{#1}\href{mailto:#2}{\texttt{#2}}}

\DeclareRobustCommand{\rchi}{{\mathpalette\irchi\relax}}
\newcommand{\irchi}[2]{\raisebox{\depth}{$#1\chi$}}

\newcommand{\RR}{\mathbb{R}}
\newcommand{\NN}{\mathbb{N}}
\newcommand{\Ix}{\mathcal{I}_{\Delta x}}
\newcommand{\Ixt}{\mathcal{I}_{\Delta}}
\newcommand{\Ixtn}{\mathcal{I}_{\Delta_n}}

\newcommand{\It}{\mathcal{I}_{\Delta t}}
\newcommand{\Ixn}{\mathcal{I}_{\Delta x_n}}
\newcommand{\Jxn}{\mathcal{J}_{\Delta x_n}}

\newcommand{\Itn}{\mathcal{I}_{\Delta t_n}}
\newcommand{\Itns}{\mathcal{I}^*_{\Delta t_n}}
\newcommand{\CC}{\mathcal{C}}
\newcommand{\BB}{\mathcal{B}}
\newcommand{\GG}{\mathcal{G}}

\newcommand{\diver}{{\rm{div}}}
\newcommand{\ov}[1]{\overline{#1}}

\def\supp{\mathop{\rm supp}}
\def\P{\mathcal{P}}
\def\M{\mathcal{M}}
\def\S{\mathcal{S}}

\def\dd{{\rm d}}

\title{A semi-Lagrangian scheme for First-Order  Mean Field Games based on monotone operators}
\author[1]{Elisabetta Carlini}
\author[2]{Valentina Coscetti}
\affil[1]{Sapienza, University of Rome, Italy. \textit{Email address:} \email{carlini@mat.uniroma1.it}}
\affil[2]{Sapienza, University of Rome, Italy. \textit{Email address:} \email{valentina.coscetti@uniroma1.it}}
\date{}

\begin{document}

    \maketitle

    \begin{abstract}
       We construct a semi-Lagrangian scheme for first-order, time-dependent, and non-local Mean Field Games.       
       The convergence of the scheme to a weak solution of the system is analyzed by exploiting a key monotonicity property. 
      To solve the resulting discrete problem, we implement a Learning Value Algorithm, prove its convergence, and propose an acceleration strategy based on a Policy iteration method. Finally, we present numerical experiments that validate the effectiveness of the proposed schemes and show that the accelerated version significantly improves performance.
    \end{abstract}

    \medskip
 \noindent{\textbf{AMS-Subject Classification: 91A16, 49N80, 35Q89, 65M12, 65M25.}}\\
    \noindent{\textbf{Keywords:} First-order Mean Field Games,  semi-Lagrangian schemes, convergence
results, accelerated Learning Algorithm, numerical simulations.} 

\section{Introduction} 
Mean Field Games (MFGs) constitute a significant class of systems used to model the strategic interactions of a large number of indistinguishable rational agents. These systems have independently been introduced  by  J.-M. Lasry-Lions 
 \cite{LasryLions06i, LasryLions06ii, LasryLions07} and  by M. Huang, R.P. Malham\'e, and P.E. Caines \cite{HMC06}, and
are typically characterized by a coupling between a backward Hamilton-Jacobi-Bellman (HJB) equation and a forward Fokker-Planck-Kolmogorov-type equation, often referred to as the continuity equation. The state of the system is described by the value function $v(x, t)$, which represents the optimal cost-to-go for an individual agent starting at position $x$ at time $t$, and the population distribution $m(x, t)$, representing the distribution of agents at position $x$ at time $t$.

We are interested in first-order, evolutive, non-local MFGs system of the form:
\begin{align}\label{eq:MFGs1} 
&-\partial_{t}v+H(x,D_x v)=F(x,m(t)) &\textrm{in }&\RR^d\times(0,T),\\
\label{eq:MFGs2} &\partial_{t}m -\diver\Bigl(D_{p}H(x,D_x  v)m\Bigl)= 0&\textrm{in }&\RR^d\times(0,T),\\
\label{eq:MFGs3} &m(0)= m_0^*,\quad v(x,T)= G(x,m(T))&\textrm{in }&\RR^d,
\end{align}
where $T>0$ is a fixed time horizon, $H\colon\RR^d \times\RR^{d}\to\RR$ is convex with respect to its second argument, $F\colon\RR^d\times\P_{1}(\RR^d)\to\RR$,  $G\colon\RR^d\times\P_{1}(\RR^d)\to\RR$, $m_{0}^{*}\in\P_{1}(\RR^d)$, and $\P_{1}(\RR^d)$ is the set of probability measures over $\RR^d$ with finite first order moment.

In general, this complex backward-forward system does not admit an explicit solution, therefore numerical methods are required to compute an approximated one.

For non-degenerate second-order MFGs systems, a variety of numerical techniques have been developed and analyzed:
finite-difference schemes are well-established and investigated in numerous studies \cite{AchdouCapuzzo10,Gueant12,AchdouPorretta16,AlmullaFerreiraGomes,BonnansPfeiffer23,LiuPfeiffer25},  semi-Lagrangian  schemes have also been explored in this context \cite{CarliniSilva18} and \cite{Chowdhury_Jakobsen23} for MFGs with non-local
and fractional diffusion terms,  
deep and reinforcement learning methods have been proposed and studied \cite{Carmona21,Carmona22,Lauriere22},
and recently finite element-based schemes have been introduced for MFGs with non-differentiable Hamiltonians \cite{OsborneSmears24,OsborneSmears25,osbornesmears2025ratesconvergencefiniteelement}.
In cases where the differential game dynamics are deterministic, the resulting MFGs system is of first order. Several numerical methods have been proposed  for approximating the solutions of such systems. Among these are semi-Lagrangian discretizations as discussed in contributions like \cite{camsilva12, CarliniSilva2014,CS15,CarliniSilvaZorkot,AshrafyanGomes}.
Another effective approach involves approximation through discrete-time finite state space MFGs, which has led to significant theoretical and computational developments \cite{HadikhanlooSilva, GianattiSilvaZorkot2024, GianattiSilva2024}. Additionally, Fourier analysis techniques offer an alternative numerical strategy \cite{NurbekyanSaude18, Liu21}.

Our purpose is to construct a semi-Lagrangian (SL) scheme aimed at improving the approach in \cite{CarliniSilva2014,CarliniSilvaZorkot} in two main aspects.
First, we eliminate the convolution-based regularization and the associated $\epsilon$ parameter, which was previously introduced to guarantee the convergence of the gradient of the value function. Second, we provide a convergent algorithm to solve the fully discrete non-linear scheme arising from the numerical approximation.

To define the scheme, we adopt the same idea as in \cite{AshrafyanGomes} for the one-dimensional price model. Although the semi-Lagrangian scheme for the value function remains the same as the one proposed in \cite{CarliniSilva2014, CarliniSilvaZorkot}, the distribution of players is transported by discrete relaxed optimal controls, rather than the regularized gradient of the value function.

The motivation for introducing discrete relaxed optimal controls is due to the fact that we can not ensure uniqueness of the discrete controls. One possible approach is therefore to formulate the scheme using relaxed controls.  Alternatively, one could regularize the Hamiltonian, as in \cite{OsborneSmears25reg}.
 However, this second approach introduces an additional parameter, in a way similar to \cite{CarliniSilva18}.

We prove an existence result  for the proposed scheme, and under a key monotonicity property, its convergence.

It is important to emphasize that the convergence result we obtain refers to a definition of weak solutions based on monotone operators, as introduced in \cite{FerreiraGomes}. This notion is less used compared to the more standard distributional-viscosity solutions, for which the theory is more developed (see, e.g. \cite{cardaliaguet2010notes,LasryLions06i,LasryLions06ii,LasryLions07}). In turn, it allows us to prove a convergence result.

Moreover, a uniqueness result for the discrete system can be established under a more restrictive assumption of the uniqueness of the discrete optimal controls.

The proposed scheme preserves the non-linear backward-forward structure and requires an algorithm to be solved.
We follow the Learning Value Iteration procedure, based on a fictitious play procedure introduced in \cite{CardaliaguetHadikhanloo} in the continuous setting, and we apply this procedure to the proposed  scheme, which results in algorithm referred as \textbf{DLVI}. The convergence of \textbf{DLVI} follows from the results in \cite{HadikhanlooSilva}.

Furthermore, we develop an accelerated version of \textbf{DLVI}, referred as \textbf{ADLVI}, obtained by initializing the \textbf{DLVI} by a efficient initial guess. This guess is computed by a discrete version of the Policy Iteration Algorithm introduced in  \cite{CacaceCamilliGoffi} to solve a second order MFGs problem.

The paper is organized as follows: 
\cref{sec_prel} introduces preliminary results and assumptions, \cref{sec:semi-Lagrangian schemes}
presents the proposed scheme for the MFGs system
and the convergence result, 
\cref{sec:algo} describes the algorithms \textbf{DLVI} and \textbf{ADLVI} and  proves convergence,   finally
\cref{sec:tests} shows the performance of the 
methods in three model problems. 

 \paragraph{Acknowledgments.} 
The authors were partially supported by Italian Ministry of Instruction, University and Research (MIUR) (PRIN Project2022238YY5, ``Optimal control problems: analysis, approximation'') and by INdAM–GNCS Project, codice CUP$\_$E53C24001950001. 
 The second author was partially supported   by European Union - Next Generation EU, Missione 4, Componente 1, CUP$\_$  B53C23002010006 and by Ricerca di Ateneo  CUP$\_$ B83C25004300005.
 
\section{Preliminaries and assumptions}\label{sec_prel}
Let $d\in\NN$. For any $x,y \in \RR^d$, we denote by $x\cdot y$ their standard inner product and by $|\cdot|$ its induced norm. We write $B(0,R)$ and $\ov{B}(0,R)$  for the associated open and closed balls, respectively, centered at the origin and radius $R>0$. We denote by  $|\cdot|_{\infty}$ the maximum norm in $\RR^d$, and we write $B_{\infty}(0,R)$ and $\ov{B}_{\infty}(0,R)$ for the corresponding open and closed balls, respectively, centered at the origin and radius $R>0$. Let $\P(\RR^d)$ be the set of probability
measures on $\RR^d$, endowed with the Kantorovich–Rubinstein distance 
\begin{equation*} \label{e:d_1_alternative} 
d_1(\eta,\tilde \eta)=\sup\bigg\{\int_{\RR^d}\varphi(x)\dd[\eta-\tilde \eta](x)\,\Big|\,\varphi: \RR^d \to \RR  \text{ is 1- Lipschitz } \bigg\}.
\end{equation*} 
We denote by $\to$ and $\rightharpoonup$  the convergence  in $\P(\RR^d)$ with respect to the $d_1$-distance and the narrow topology, respectively.
For every $\eta\in\P(\RR^{d})$ we denote by $\supp(\eta)$ its support.
In what follows, given an absolutely continuous measure (w.r.t. the Lebesgue measure in $\RR^{d}$)  $\eta\in \P(\RR^d)$,  we still denote by $\eta$ its density.
For any normed vector spaces $(E^1,d_{E^1})$ and $(E^2,d_{E^2})$, we denote by  $\mathcal{B}(E^1;E^2)$, $\CC_b(E^1;E^2)$, $\CC^k(E^1;E^2)$, $\CC^{k}_c(E^1;E^2)$, 
the sets of  bounded, bounded continuous, $k$-times continuously differentiable, and $k$-times continuously differentiable functions with compact support   from $E^1$ to $E^2$ respectively.  Whenever $E^2=\RR$, we suppress explicit mention of the codomain and use the abbreviated notations  $\mathcal{B}(E^1)$, $\CC_b(E^1)$,  $\CC^k(E^1)$ and   $\CC^{k}_c(E^1)$. 

We suppose that the functions $F,G :\RR^{d} \times \P_{1}(\RR^d) \to \RR$ and the measure $m^*_0$, which are the data of   the MFGs system, satisfy the following assumptions:
   \begin{Hassum}
        \item\label{continuity} $F,G \in \CC( \RR^d \times\P_{1}(\RR^d))$; 
        \item\label{C2bounded}  $G(\cdot,\eta)\in \CC^2(\RR^d)$ for any $\eta \in \P_1(\RR^d)$, and there exist  some constants $C_{F,i}>0,C_{G,i}>0$ $(i=1,2)$ such that, for any $\eta \in \P_1(\RR^d)$, 
        \begin{equation*}
        \begin{aligned}
        |F(x,\eta|&\leq C_{F,1}, \\ 
            |F(x,\eta)-F(y,\eta|&\leq C_{F,2}|x-y|, \\ 
        \supp(G(\cdot,\eta))&\subseteq \ov B(0,C_{G,1}),\\ 
            \sup_{x\in \RR^{d}}\Big\{|G(x, \eta)|+ |D_x G(x, \eta)|+ |D^2_xG(x, \eta)|\Big\}\,&\leq C_{G,2};\\
        \end{aligned}
        \end{equation*}
     \item\label{initialdensity} the initial condition $m^*_{0}\in \P_{1}(\RR^d)$ is absolutely continuous with respect to the Lebesgue measure, with density still denoted by $m^*_{0}$ such that $m_0^*\in \CC^2(\RR^d)$   and $\supp(m^*_{0})\subseteq \ov B(0,C^*)$, for some $C^*>0$;  
     \item \label{ass:lagrangian}  it holds that 
\begin{equation*}
    H(x,p):=\sup_{a\in \RR^d}\big( a\cdot p - L(x,a)\big) \quad \text{ for any\,} (x,p) \in \RR^d\times\RR^d,
\end{equation*}
     where $L\in \CC^2(\RR^d\times\RR^d)$, it is bounded from below  and there exist some constants $C_{L,i}>0$  $(i=1,\dots,5)$ such that, for any $(x,a)\in \RR^d \times \RR^d$, we have
\begin{equation*}
\begin{aligned}
L(x,a)&\leq C_{L,1}|a|^2+C_{L,2},\\
|D_{x}L(x,a)|&\leq C_{L,3}(1+|a|^2),\\
C_{L,4}|b|^2&\leq 
D^{2}_{aa}L(x,a)b \cdot b \quad\text{for any }b\in\RR^d,\\
D^{2}_{xx}L(x,a)y\cdot y &\leq C_{L,5}(1+|a|^2)|y|^2\quad\text{for any }y\in\RR^d;
\end{aligned}
\end{equation*}
\item\label{monoticityassumption} for $h=F,G$ the following Lasry--Lions monotonicity condition holds         \begin{equation*}
    \int_{\RR^{d}}\Big(h(x,\eta)-h(x,\tilde \eta)\Big)\dd[\eta-\tilde \eta](x)\geq 0 \quad \text{for any }  \eta,\tilde \eta\in \P_{1}(\RR^d).
\end{equation*}
\end{Hassum}
\begin{rem} As shown in \cite[Remark 2.1]{CarliniSilvaZorkot}, \ref{ass:lagrangian} implies that there exist $C_{L,6}>0 $ and $C_{L,7}>0$ such that 
\begin{equation*}
    L(x,a)\geq C_{L,6}|a|^{2}-C_{L,7}\quad\text{for any }x,\,a\in\RR^d.
\end{equation*}
Therefore, $H\in \CC^2(\RR^d\times \RR^d)$, $D_{p}H(x,p)$ is the unique maximizer of $\sup_{a\in\RR^{d}}\big\{ a\cdot p -L(x,a)\big\}$ and there exist some constants $C_{H,i}>0$ $(i=1,\cdots,5)$ such that 
\begin{equation*}
\begin{aligned}
   C_{H,1}|p|^2-C_{H,2}&\leq H(x,p)\leq C_{H,3}|p|^2+C_{H,4}\quad\text{for any }x,\,p\in\RR^d, \\ 
   |D_{p}H(x,p)|&\leq C_{H,5}(1+|p|)\quad\text{for any }x,\,p\in\RR^{d}.
\end{aligned}
\end{equation*}
\end{rem}

\begin{rem}\label{rem:viscosity} Under
\cref{C2bounded,continuity,initialdensity,ass:lagrangian},there exists at least one solution 
$(m,v) \in L^1(\mathbb{R}^d\times(0,T))\times W^{1,\infty}_{loc} (\mathbb{R}^d\times[0, T])$ such that \eqref{eq:MFGs2}  is satisfied in the sense of distributions, while \eqref{eq:MFGs1} is satisfied
 in the viscosity sense. Moreover, also assuming \ref{monoticityassumption}
and  $F(\cdot,\eta)\in \CC^2(\RR^d)$ with uniformly bounded derivatives, the solution is unique (see \cite{lions2007cours,cardaliaguet2010notes}).
\end{rem}

A notion of weak solution for stationary MFGs problems was introduced in \cite{FerreiraGomes}, relying on the theory of motonone operators, as mentioned also in \cite{CardaliaguetPorretta} for second order MFGs problems. Following this approach,   the MFGs system can be rephrased as $\mathcal{A}[w] = 0$ subject to  initial-terminal conditions, where $\mathcal{A}$ is  the
operator on the couple $w=(m,v)$ defined by
\begin{equation*}
\mathcal{A}[w]
\coloneqq
\begin{bmatrix}
	\partial_{t}v(x,t)-H(x,D_xv(x,t))+F(x,m(t)) \\
	\partial_{t} m(x,t)- \diver\Bigl(D_pH\big(x,D_xv(x,t)\big)m(x,t)\Bigl) \\
\end{bmatrix}. 
\end{equation*}
For any $w=(m,v)$ and $\Tilde w=(\Tilde m,\Tilde v)$, setting $\langle w,\tilde{w} \rangle \coloneqq \int_{Q_T}\Big( m(x,t)\Tilde m(x,t) +  v(x,t)\Tilde v(x,t)  \Big)\dd x\dd t,$
and $Q_T:={\RR^d\times[0,T]}$,
we give the following notion of weak solution:
\begin{defin}\label{weak solution}
    The couple $w^*=(m^*,v^*)$ is a {\emph weak} solution  to   the MFGs system if $m^*,v^* :{Q_T}\to \RR$, $m^*(t)\in\P_1(\RR^d)$ for any $t\in [0,T]$, 
     $ v^*\in L^\infty({Q_T})$ and 
    \begin{equation*}
         \langle \mathcal A[\varphi],\varphi-w^* \rangle \geq 0 \qquad \text{for any } \varphi= (\phi,\psi) \in \mathcal{D},
    \end{equation*}
where 
       \begin{equation*}
               \begin{aligned}
        \mathcal{D}\coloneqq \Big\{ \varphi=(\phi,\psi) \,  \Big |\,  
        &\phi,\psi\in \CC_c^2({Q_T}), \\ 
        &\phi \geq 0, \, \int_{\RR^d}\phi(x,t)\dd x=1  \text{ for any } t \in [0,T], \, \phi(x,0)=m_0^*(x),\\  & \psi(x,T)=G(x,\phi(T)), \, 
       ||D_x \psi||_{L^\infty({Q_T})}\leq \ov  C_{\text{{\rm b}}} \Big\},
     \end{aligned}
    \end{equation*}
with $\ov C_{\text{{\rm b}}}>0$ such that $C_{G,2}\leq\ov C_{\text{{\rm b}}}\leq C_H/C_{H,5}-1$, where $ C_H\coloneqq C_{\text{{\rm b}}}+C_{H,5}(C_{G_2}+1)$ and  $ C_{\text{{\rm b}}}>0$ is the constant defined in  \cite[Proposition 2.1]{CarliniSilvaZorkot}. 
\end{defin}

Let us observe that the constants  $\ov C_{\text{{\rm b}}}$ and $C_H$ guarantee  $||D_pH(\cdot,D_x\psi)||_{L^\infty({Q_T})}\leq C_H$.

The notion of solution in \cref{weak solution}
is less used than the one referred in \cref{rem:viscosity}, however it allowed us to
established a converge result for
 the SL scheme proposed in \cref{sec: A semi-Lagrange scheme for MFGs}.  In addition, this convergence result
 proves that, under   the same assumptions of \cref{rem:viscosity}  and also assuming  \ref{DiscreteMonotonicity} 
and \ref{condition dxdt for compact support discrete solution},  there exists a weak solution to   the MFGs system (\cref{existencewealsolution}) in the sense of \cref{weak solution}.
However, we are currently unable to conclude whether such a weak solution is also a distributional-viscosity solution; this remains an open question. Partial results in this direction have been 
established for stationary MFGs in \cite{FerreiraGomesVardan25}. 
We also note that any classical solution is automatically both a weak solution and a distributional-viscosity solution, as also  observed in \cite{AshrafyanGomes}.

\section{Semi-Lagrangian schemes}\label{sec:semi-Lagrangian schemes}
We recall some standard SL schemes for approximating solutions to the HJB equation and the continuity equation. These schemes will be employed in the subsequent algorithms developed to compute approximate solutions to   the MFGs system.

Let us consider a pair $\Delta=(\Delta x,\Delta t)>0$ such that $N_T\Delta t=T$ for some $N_T\in\NN_+$. We define the sets of indices  
$\It\coloneqq\{0,\dots, N_T\}$, $\It ^*\coloneqq\It \backslash \{N_T\}$,  
$\Ix\coloneqq\mathbb{Z}^d$, $\Ixt\coloneqq \Ix\times\It$ and $\Ixt^*\coloneqq \Ix\times\It^*$.
For any $(i,k)\in \Ixt$ we define  $x_i\coloneqq i\Delta x$, $t_k\coloneqq k\Delta t$ and the cell $E_{i}:= \{x\in\RR^d\, |\, |x-x_i|_\infty \leq \Delta x/2\}$. We define the $d$ dimensional space grid
$\GG_{\Delta x}\coloneqq \{x_{i},  i\in \Ix\}$, the time grid $\GG_{\Delta t}\coloneqq \{t_{k}, k\in \It\}$ and the space-time grid $\GG_{\Delta}\coloneqq \GG_{\Delta x}\times \GG_{\Delta t}$. We also define  $\GG^*_{\Delta t}\coloneqq \{t_{k}=k\Delta t, k\in \It^*\}$ and $\GG^*_{\Delta}\coloneqq \GG_{\Delta x}\times \GG^*_{\Delta t}$.   Given $h \in \BB(\GG_{\Delta x};\RR^d)$ and $l\in \BB(\GG_{\Delta};\RR^d)$, we use the notation 
\begin{equation*}
    h_i\coloneqq h(x_i) \qquad l_{i,k}\coloneqq l(x_i,t
    _k) \qquad \text{ for any } (i,k)\in \Ixt,
 \end{equation*} 
 and, for $k\in \It$, we define the function $l_{k}\in \BB(\GG_{\Delta x};\RR^d)$ such that  $ (l_{k})_{i}\coloneqq l_{i,k}$ for any $i\in \Ix$.
 Given $\varphi \in \CC_{b}(\RR^{d})$ and $\psi\in \CC_{b}(Q_T)$, we define the functions  $\hat \varphi \in\BB(\GG_{\Delta x})$ and $\hat \psi \in \BB(\GG_{\Delta})$ such that 
\begin{equation*}
  \hat \varphi _i\coloneqq \varphi(x_i) \qquad \hat \psi _{i,k}\coloneqq\psi(x_i,t_k) \qquad \text{ for any } (i,k)\in \Ixt.
\end{equation*}
We consider the $\mathbb{Q}_1$ basis of functions $\{\beta_{i}:\RR^{d}\to\RR, \, i\in\Ix\}$ (see \cite{CarliniSilvaZorkot}) and the interpolation operator $I_{\Delta x}:\BB(\GG_{\Delta x})\to \CC_{b}(\RR^{d})$ defined by  
\begin{equation}\label{interpolation operator}
    I_{\Delta x}[W](\cdot)\coloneqq \sum_{i\in \Ix}W_{i}\beta_{i}(\cdot).
\end{equation}
We have that, given $\phi\in \CC(\RR^d)$, the following holds  (see \cite{QuarteroniValli})
\begin{equation} \label{interperr}
    \sup_{x\in \RR^d}|I_{\Delta x}[\hat{\phi}](x)-\phi(x)|= \mathcal{O}(\Delta x^{\gamma}),
\end{equation}
where $\gamma=1$ if $\phi$ is Lipschitz and $\gamma=2$ if $ \phi\in \CC^2(\RR^d)$ with bounded first and second
derivatives.

\subsection{Fully discrete scheme for the Hamilton Jacobi Bellman equation }\label{A_fully_discrete_scheme_for_the_Hamilton_Jacobi_Bellman_equation}
We consider the following fully discrete SL scheme, which differs from standard SL formulations in the discretization of the running and terminal costs.
Although the change is mild, it enables us to establish a discrete monotonicity property for the entire scheme, a key ingredient in the analysis of the Mean Field Games system.
In particular, \cref{rem: discrete monotonicity}  and \cref{rem: monotone descrete operator} show that the proposed discretization satisfies the required discrete monotonicity assumptions.
Given $u \in \BB(\GG_{\Delta})$, we define its extension on $ Q_T $  by
\begin{equation}\label{vdelta}
    V_{\Delta}[u](x,t)\coloneqq I_{\Delta x}[u_{\lceil\frac{t}{\Delta t}\rceil}](x).
\end{equation}
Given $\eta \in \CC([0,T];\P_1(\RR^d))$ 
and  $C_H$  as in  \cref{weak solution},  we define ${S}_{\Delta}[\eta]:\BB(\GG_{\Delta x})\times \Ixt \to \RR$ by
\begin{equation*}\label{eq:infS}
    {S}_{\Delta}[\eta](W,i,k)\coloneqq \inf_{a\in \ov{B}_{\infty}(0,C_H) }\left\{I_{\Delta x}[W](x_{i}-\Delta t a)+\Delta tL(x_i,a)\right\}+\frac{\Delta t}{\Delta x^d}\int_{E_i} F(x,\eta(t_{k}))\dd x.
\end{equation*}

We consider the following fully-discrete SL scheme for \eqref{eq:MFGs1}: define recursively 
 $v\in \BB(\GG_{\Delta})$ as
\begin{equation}\label{eq:HJBdiscr}\tag{$HJBdiscr$}
            \begin{aligned}
               &v_{i,k}={S}_{\Delta}[\eta](v_{k+1},i,k) &&\text{for any   }(i,k)\in \Ixt ^*, \\
               &v_{i,N_T}=\int_{E_i}G(x,\eta(T))\dd x &&  \text{for any  }i\in \Ix. \\  
            \end{aligned}
\end{equation}
and its extension  $v_{\Delta}:=V_{\Delta}[v]$.
The following properties of ${S}_{\Delta}$ and $v_{\Delta}$  are proved similarly to \cite{CarliniSilvaZorkot}. 
\begin{prop}\label{properties decrete value function}
Under \onlyref{continuity,C2bounded,ass:lagrangian}, let $\eta \in \CC([0,T];\P_1(\RR^d))$ and $\Delta=(\Delta x,\Delta t)>0$,
the following properties hold:
\begin{enumerate}
\item [{\rm(i)}]  {\rm[Monotonicity]} 
For any $W,\tilde W\in \BB(\GG_{\Delta_x})$ such that $W_i\leq \tilde W_i $ for any $i\in \Ix$, we have 
\begin{equation*}
{S}_{\Delta}[\eta](W,i,k)\leq {S}_{\Delta}[\eta](\tilde W,i,k) \quad \text{for any } (i,k)\in \Ixt.
\end{equation*}
\item [{\rm(ii)}]  {\rm[Consistency]}  
For any $\psi\in\CC^2(Q_T)$ with bounded derivatives and s.t. 
$||D_x \psi||_{L^\infty(Q_T)}\leq \ov  C_{\text{{\rm b}}}$, for any $(i,k)\in \Ixt^*$, we have 
\begin{equation*}
\begin{aligned}
\frac{\hat\psi_{i,k}-{S}_{\Delta}[\eta](\hat{\psi}_{k+1},i,k)}{\Delta t}=&\partial_{t}\psi(x_i,t_k)+H(x_i,D_{x}\psi(x_i,t_k))-F(x_i,\eta(t_k))\\ &+\mathcal{O}\left(\frac{\Delta x^2}{\Delta t}+\Delta t +\Delta x \right).
\end{aligned}
\end{equation*}

    \item [{\rm(iii)}] {\rm[Stability]}
  We have 
\begin{equation*}
|v_{\Delta}(x,t)|\leq \widetilde C_{\text{{\rm v}}}\quad\text{for any }(x,t)\in \RR^{d}\times [0,T], 
\end{equation*}
where $\widetilde{C}_{\text{{\rm v}}}>0$ is independent of $(\eta,\Delta x,\Delta t)$.
\item [{\rm(iv)}] {\rm[Lipschitz property]}
  We have 
  \begin{equation*}
      \big|v_{\Delta}(x,t)-v_{\Delta}(y,t)\big|\leq \widetilde{C}_{\text{{\rm Lip}}}|x-y|\quad\text{for any }x,\,y\in\RR^d\,, t\in[0,T],
  \end{equation*}
  where $\widetilde{C}_{\text{{\rm Lip}}}>0$ is independent of
$(\eta,\Delta x,\Delta t)$. 
\end{enumerate}
\end{prop}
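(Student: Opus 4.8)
The plan is to establish the four items one at a time, each reducing to the definition of ${S}_\Delta[\nu]$ together with three elementary features of the $\mathbb{Q}_1$ interpolation operator $I_{\Delta x}$ of \eqref{interpolation operator}: the basis functions $\beta_i$ are nonnegative with $\sum_i\beta_i\equiv1$, so $I_{\Delta x}$ is monotone ($W_i\le\tilde W_i$ for all $i$ implies $I_{\Delta x}[W]\le I_{\Delta x}[\tilde W]$ pointwise), it reproduces constants, and $\|I_{\Delta x}[W]\|_{\infty}\le\sup_i|W_i|$. Item (i) is then immediate: if $W_i\le\tilde W_i$ for all $i$, then for every admissible $a\in\ov B_{\infty}(0,C_H)$ one has $I_{\Delta x}[W](x_i-\Delta t a)+\Delta tL(x_i,a)\le I_{\Delta x}[\tilde W](x_i-\Delta t a)+\Delta tL(x_i,a)$; taking the infimum over such $a$ and adding the common term $\Delta tF(x_i,\nu(t_k))$ preserves the inequality.

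For item (ii) I would plug $\hat\psi_{k+1}$ into ${S}_\Delta[\nu]$ and expand. First, $I_{\Delta x}[\hat\psi_{k+1}](x_i-\Delta t a)=\psi(x_i-\Delta t a,t_{k+1})+\mathcal O(\Delta x^2)$ uniformly in $a$, by \eqref{interperr} with $\gamma=2$ since $\psi\in\CC^2(Q_T)$ has bounded derivatives; then, because $a$ ranges over the \emph{bounded} set $\ov B_{\infty}(0,C_H)$, a second-order Taylor expansion gives $\psi(x_i-\Delta t a,t_{k+1})=\psi(x_i,t_k)+\Delta t\,\partial_t\psi(x_i,t_k)-\Delta t\,a\cdot D_x\psi(x_i,t_k)+\mathcal O(\Delta t^2)$ with remainder uniform in $a$. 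Substituting, using the identity $\inf_{a\in\ov B_{\infty}(0,C_H)}\{L(x_i,a)-a\cdot D_x\psi(x_i,t_k)\}=-H(x_i,D_x\psi(x_i,t_k))$, and dividing by $\Delta t$ yields the stated consistency relation, the $\mathcal O(\Delta x^2/\Delta t)$ error coming from the interpolation step (the Taylor remainder contributing a further $\mathcal O(\Delta t)$ of lower or equal order). The step needing care is that infimum identity: the unique unconstrained maximizer of $a\mapsto a\cdot p-L(x_i,a)$ is $a^*=D_pH(x_i,p)$ with $|a^*|\le C_{H,5}(1+|p|)$, so for $p=D_x\psi(x_i,t_k)$ the admissibility bound $\|D_x\psi\|_{L^\infty(Q_T)}\le\ov C_{\rm b}$ together with $\ov C_{\rm b}\le C_H/C_{H,5}-1$ gives $|a^*|\le C_H$, hence $a^*\in\ov B_{\infty}(0,C_H)$ and the truncated infimum equals the untruncated one; this is exactly why $C_H$ and $\ov C_{\rm b}$ are calibrated as in \cref{weak solution}.

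Items (iii) and (iv) I would obtain by backward induction on $k$ from $k=N_T$. For stability, $|v_{i,N_T}|=|G(x_i,\nu(T))|\le C_{G,2}$ by \ref{C2bounded}; in the step, $a=0$ gives $v_{i,k}\le\|v_{k+1}\|_\infty+\Delta t(L(x_i,0)+C_{F,1})\le\|v_{k+1}\|_\infty+\Delta t(C_{L,2}+C_{F,1})$, while $L\ge-C_{L,7}$ and $I_{\Delta x}[v_{k+1}]\ge-\|v_{k+1}\|_\infty$ give $v_{i,k}\ge-\|v_{k+1}\|_\infty-\Delta t(C_{L,7}+C_{F,1})$; iterating over the at most $N_T$ levels yields $\|v_k\|_\infty\le C_{G,2}+TC=:\widetilde C_{\rm v}$ with $C$ depending only on the data, and $|v_\Delta(x,t)|\le\|v_{\lceil t/\Delta t\rceil}\|_\infty$ closes the stability estimate. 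For the Lipschitz bound I would run the same induction on $\ell_k\coloneqq\sup_{i\ne j}|v_{i,k}-v_{j,k}|/|x_i-x_j|$: one has $\ell_{N_T}\le C_{G,2}$ from the bound on $D_xG$, and in the step, picking $a$ near-optimal in ${S}_\Delta[\nu](v_{k+1},j,k)$ and using it as a competitor for $v_{i,k}$ (legitimate since $\ov B_{\infty}(0,C_H)$ does not depend on the index) reduces the estimate to bounding $|I_{\Delta x}[v_{k+1}](x_i-\Delta t a)-I_{\Delta x}[v_{k+1}](x_j-\Delta t a)|$ plus $|L(x_i,a)-L(x_j,a)|\le C_{L,3}(1+|a|^2)|x_i-x_j|\le C_{L,3}(1+dC_H^2)|x_i-x_j|$ (using $a\in\ov B_{\infty}(0,C_H)$) and $|F(x_i,\nu)-F(x_j,\nu)|\le C_{F,2}|x_i-x_j|$. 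The first term is $\le\ell_{k+1}|x_i-x_j|$ because $x_i-x_j$ is a \emph{grid} vector: for $h\in\Delta x\,\mathbb Z^d$ one has $I_{\Delta x}[W](\cdot+h)-I_{\Delta x}[W](\cdot)=\sum_j(W_{j+h/\Delta x}-W_j)\beta_j(\cdot)$, so $\beta_j\ge0$ and $\sum_j\beta_j\equiv1$ show that translation by a grid vector does not inflate the discrete Lipschitz constant. Hence $\ell_k\le\ell_{k+1}+\Delta t\,C'$, so $\ell_k\le C_{G,2}+TC'$ after iteration; since $v_\Delta(\cdot,t)=I_{\Delta x}[v_{\lceil t/\Delta t\rceil}]$ is Lipschitz with a constant at most a dimensional multiple of $\ell_{\lceil t/\Delta t\rceil}$, this produces $\widetilde C_{\rm Lip}$ independent of $(\nu,\Delta)$.

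I expect the real obstacles to be exactly the two bookkeeping points highlighted above: in (ii), verifying that all Taylor and interpolation remainders are genuinely uniform over $a\in\ov B_{\infty}(0,C_H)$ and that truncating the control to that ball does not spoil consistency---which is where the calibration of $C_H$ and $\ov C_{\rm b}$ is used decisively---and in (iv), handling $\mathbb{Q}_1$ interpolation carefully, namely exploiting that only grid-vector translations occur inside the induction (so no dimensional factor accumulates over the $N_T$ steps) and absorbing the single remaining dimensional factor when passing from the grid values to $v_\Delta$ on all of $\RR^d$. Both are routine for this family of semi-Lagrangian schemes and are carried out in detail in \cite{CarliniSilvaZorkot}.
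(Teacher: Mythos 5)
The paper does not prove this proposition at all: it is quoted verbatim from \cite{CarliniSilvaZorkot}, where these properties are established, and your argument is essentially the standard semi-Lagrangian one used there (monotonicity and stability from nonnegativity, partition of unity and constant-reproduction of the $\mathbb{Q}_1$ basis; consistency via \eqref{interperr} with $\gamma=2$ plus a Taylor expansion and the calibration of $\ov{C}_{\mathrm{b}}$, $C_H$ so that the truncated infimum realizes $-H$; Lipschitz bounds by backward induction using grid-vector translations of the interpolant). So your proposal is correct in substance. Two small caveats worth recording: carried out literally, your expansion gives $\frac{\hat\psi_{i,k}-S_\Delta[\nu](\hat\psi_{k+1},i,k)}{\Delta t}=-\partial_t\psi(x_i,t_k)+H(x_i,D_x\psi(x_i,t_k))-F(x_i,\nu(t_k))+\dots$, i.e.\ the residual of the backward HJB equation; the sign $+\partial_t\psi$ in the displayed statement is a misprint, not something your computation should be bent to match. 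Also, the Taylor remainder contributes an additional $\mathcal{O}(\Delta t)$ which is \emph{not} dominated by $\mathcal{O}(\Delta x^2/\Delta t)$ in the inverse-CFL regime $\Delta x/\Delta t\to 0$ used later, so the sharp error is $\mathcal{O}(\Delta t+\Delta x^2/\Delta t)$; this is harmless for the convergence argument, since both terms vanish, but your remark that the $\mathcal{O}(\Delta t)$ term is ``of lower or equal order'' is not accurate in that regime.
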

\medskip

The  term $\mathcal{O}\Delta x)$ 
in the consistency error is due to the introduction of the cell average of the cost instead of  their pointwise value. However, this extra term does not change the overall truncation error of the fully discrete MFGs system, which appears in the proof \cref{convergenceschemeMFGsD1}.

 For any
 $(i,k)\in \Ixt^*$ and $v \in \BB(\GG_{\Delta})$, we define  the set of {\emph{discrete optimal controls}}
\begin{equation}\label{optimal set}
\begin{aligned}
  \Lambda_{i,k}[v]&\coloneqq\amin_{a \in \ov{B}_{\infty}(0,C_H)}\left\{I_{\Delta x}[v_{k+1}](x_{i}-\Delta t a)+\Delta tL(x_i,a)\right\}.\\ 
\end{aligned}
\end{equation}

Although the Lagrangian is strictly convex and therefore ensures uniqueness of the optimal control at the continuous level, this property may be lost in the discrete scheme. In fact, the interpolation procedure used in \eqref{optimal set} does not preserve strict convexity, and as a consequence the discrete minimization problem may admit multiple solutions. To deal with this potential lack of uniqueness, following \cite{AshrafyanGomes}, we introduce relaxed controls, i.e., probability measures supported on the set of discrete minimizers. This formulation naturally extends deterministic controls and guarantees the well-posedness of the scheme.

Therefore, we also define a set formed by grid functions of optimal controls and a set of  {\emph{relaxed controls}}
\begin{equation*} 
\begin{aligned}
  \Lambda_{\Delta}[v]&\coloneqq\{q\in \BB(\GG_{\Delta}^*;\RR^d) \, | \, q_{i,k}\in \Lambda_{i,k}[v]\}, \\ 
   \M_{\Delta}[v]&\coloneqq \Big\{\mu=(\mu_{i,k})_{(i,k)\in \Ixt^*} \, \Big | \, \mu_{i,k}\in \P_1(\RR^d), \, \supp(\mu_{i,k})\subseteq\Lambda_{i,k}[v]  \Big\}.
\end{aligned}
\end{equation*}

\subsection{Fully discrete scheme for the continuity equation}\label{A_fully_discrete_scheme_for_the_continuity_equation}
Given $\Delta=(\Delta x, \Delta t)$, we define  the sets $ \S(\GG_{\Delta x}):= \{ z\in \BB(\GG_{\Delta x}) |\, z_{i}\geq 0, \; \sum_{i\in \Ix}z_{i}=1 \}$ 
and $\S(\GG_{\Delta})\coloneqq \{ \eta=(\eta_{k})_{k=0}^{N_{T}} |\,\eta_{k}\in \S(\GG_{\Delta x}) \}$.
For any $\eta\in \S(\GG_{\Delta x})$, we define its extension on  $\RR^d$ as 
\begin{equation*}
    M_{\Delta x }[\eta](x)\coloneqq \frac{1}{(\Delta x) ^{d}}\sum_{i \in \Ix} \eta_i\rchi_{E_{i}}(x), \quad {\textrm{for }}\,x\in \RR^d,
\end{equation*}
where
$ E_{i}\coloneqq \{x\in\RR^d\, |\, |x-x_i|_\infty \leq \Delta x/2\}$,  and $\rchi_{E}$ denotes the characteristic function of $E\subset \RR^d$.
Given $\eta\in \S(\GG_{\Delta})$, we also define  its extension on  $Q_T$  as 
\begin{equation}\label{mdelta}
 M_{\Delta}[\eta](x,t)\coloneqq M_{\Delta x}[\eta_k](x) \quad \text{if }t\in [t_k,t_{k+1}), k \in \It^*,\quad M_{\Delta}[\eta](x,t)\coloneqq M_{\Delta x}[\eta_{N_T}](x) \quad \text{if }t=T.
\end{equation}
Given $\mu=(\mu_{i,k}) $ such that $\mu_{i,k}\in \P(\RR^d)$ for  any ${(i,k)\in \mathcal{I}_{\Delta}^*}$, we define recursively $m\in \S(\GG_{\Delta})$  by the following discrete transport of the initial measure $m_0^*$ induced  by a generic probability measure $\mu$ :
\begin{equation}\label{eq:CEdiscr}\tag{$CEdiscr$}
            \begin{aligned}
               &m_{i,k+1}=\sum_{j\in \Ix}m_{j,k} \int_{\RR^d}\beta_{i}(x_j-\Delta t a)\dd \mu_{j,k}(a) &&\text{for any   }(i,k)\in \Ixt ^*, \\
               &m_{i,0}=\int_{E_{i}}\dd [m_0^*](x) &&  \text{for any  }i\in \Ix. \\  
            \end{aligned}
\end{equation}
It is easy to verify the following result,  analogous to \cite[Lemma 4.1]{CarliniSilvaZorkot}. 
\begin{lem}\label{SLCEproperties}
Let us assume  \ref{initialdensity} and
  $\Delta=(\Delta x,\Delta t )>0.$
  Let $\mu=(\mu_{i,k})$ be such that $\mu_{i,k}\in \P(\RR^d)$ 
  with $\supp(\mu_{i,k}) \subseteq \ov B(0,R)$, for some $R>0$, 
  for any ${(i,k)\in \mathcal{I}_{\Delta}^*}$. Let
  $m\in \S(\GG_{\Delta})$  be defined by \eqref{eq:CEdiscr}. 
For any $c>0$, such that $\Delta x\leq c\Delta t$, then there exists $C>0$, independent of $(\Delta ,\mu)$, such that 
  $m_{i,k}=0$ if $x_i\notin\ov B(0,C)$ for any $(i,k) \in \Ixt$.
\end{lem}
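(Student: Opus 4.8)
The claim is essentially a finite-speed-of-propagation statement for the discrete transport scheme \eqref{eq:CEdiscr}: if the initial mass sits inside $\ov B(0,C^*)$ and every control measure $\mu_{i,k}$ is supported in $\ov B(0,R)$, then after $k$ steps the mass can only have moved a bounded distance, uniformly in $\Delta$ and $\mu$. The plan is to argue by induction on $k$, tracking at each step the smallest ball that contains $\{x_j : m_{j,k}>0\}$ and showing it grows by a controlled amount each step; the total growth over the $N_T = T/\Delta t$ steps telescopes to something independent of $\Delta$ because each step contributes $O(\Delta t)$.

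More precisely, first I would recall the locality of the $\mathbb{Q}_1$ basis: there is a fixed constant $c_\beta$ (coming from the stencil size, namely $c_\beta = \sqrt d\,$ or similar) such that $\beta_i(y) \neq 0$ only if $|y - x_i|_\infty \le \Delta x$. Hence in the update
\[
m_{i,k+1}=\sum_{j\in \Ix}m_{j,k} \int_{\RR^d}\beta_{i}(x_j-\Delta t a)\dd \mu_{j,k}(a),
\]
a term can be nonzero only if there is some $j$ with $m_{j,k}>0$, some $a\in\supp(\mu_{j,k})\subseteq\ov B(0,R)$, and $|x_i - (x_j - \Delta t\,a)|_\infty \le \Delta x$. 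This forces $|x_i|_\infty \le |x_j|_\infty + \Delta t\,|a| + \Delta x \le |x_j|_\infty + R\,\Delta t + \Delta x$ (absorbing the passage between $|\cdot|$ and $|\cdot|_\infty$ into constants). So if $R_k$ denotes a radius with $m_{j,k}=0$ for $|x_j| > R_k$, then one may take $R_{k+1} = R_k + c_1(R\,\Delta t + \Delta x)$ for a fixed geometric constant $c_1$.

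Next, using the hypothesis $\Delta x \le c\,\Delta t$, the per-step increment is bounded by $c_1(R + c)\Delta t$. Starting from $R_0 \le C^*$ (by \ref{initialdensity} and the definition $m_{i,0}=\int_{E_i}\dd[m_0^*]$, whose support lies in $\ov B(0,C^*)$, up to the half-cell correction $\Delta x/2 \le c\Delta t/2$), induction over $k \le N_T$ gives
\[
R_k \le C^* + k\,c_1(R+c)\,\Delta t + \tfrac{c}{2}\Delta t \le C^* + c_1(R+c)\,T + \tfrac{cT}{2} =: C,
\]
which is independent of $\Delta$ and of $\mu$. Setting $C$ to this value proves the lemma. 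The only mildly delicate point is bookkeeping the norm equivalence constants ($|a|\le\sqrt d\,|a|_\infty$ etc.) and the exact support size of the $\mathbb{Q}_1$ basis, but these are fixed geometric constants, so there is no real obstacle; the argument is a routine discrete Grönwall/telescoping estimate once the one-step support-propagation bound is in place.
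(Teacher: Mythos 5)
Your argument is correct and is exactly the finite-speed-of-propagation estimate the paper has in mind: the paper itself omits the proof, stating the lemma is ``easy to verify'' by analogy with \cite[Lemma 4.1]{CarliniSilvaZorkot}, and that reference's proof is the same one-step support-propagation bound (locality of the $\mathbb{Q}_1$ basis plus $\supp(\mu_{i,k})\subseteq \ov B(0,R)$) telescoped over the $N_T=T/\Delta t$ steps using $\Delta x\leq c\Delta t$. No gaps; the resulting constant $C$ depends only on $C^*$, $R$, $c$, $T$ and fixed geometric constants, as required.
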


\subsection{A semi-Lagrangian scheme \eqref{eq:MFGsdiscr1}  for MFGs}\label{sec: A semi-Lagrange scheme for MFGs}
We introduce a SL scheme to approximate a weak solution to   the MFGs system. 
This scheme is obtained by combining the SL scheme for the HJB equation defined in \cref{A_fully_discrete_scheme_for_the_Hamilton_Jacobi_Bellman_equation} and the SL scheme for the continuity equation defined in \cref{A_fully_discrete_scheme_for_the_continuity_equation}. We show that, under suitable assumptions, there exists a  solution to this scheme and it converges to a weak solution to   the MFGs system. 

We define $f,g : \GG_{\Delta x} \times \S(\GG_{\Delta x})\to\RR$ 
by 
\begin{equation*} 
    f(x_i,\eta)\coloneqq \frac{1}{\Delta x^d}\int_{E_i}F(x,M_{\Delta x}[\eta])\dd x, \qquad g(x_i,\eta)\coloneqq \frac{1}{\Delta x^d}\int_{E_i}G(x,M_{\Delta x}[\eta])\dd x. 
\end{equation*}
The proposed SL scheme is the following: find $(m,v) \in \S(\GG_{\Delta})\times \BB(\GG_{\Delta})$ such that 
\begin{equation}\label{eq:MFGsdiscr1}\tag{$MFG^1_d$}
            \begin{aligned}
               &v_{i,k}=I_{\Delta x}[v_{k+1}](x_{i}-\Delta t q_{i,k})+\Delta tL(x_i,q_{i,k})  +\Delta t f(x_i,m_k)&&\text{for any   }(i,k)\in \Ixt ^*, \\
                 &m_{i,k+1}=\sum_{j\in \Ix}m_{j,k} \int_{\RR^d}\beta_{i}(x_{j}-\Delta t a)\dd[\mu_{j,k}](a)  &&\text{for any   }(i,k)\in \Ixt ^*,\\
               &v_{i,N_T}=g(x_i, m_{N_T}), \qquad m_{i,0}=\int_{E_{i}}\dd [m^*_{0}](x)&&  \text{for any  }i\in \Ix,\\  
            \end{aligned}
\end{equation}
 with  $q \in \Lambda_{\Delta}[v]$  and $\mu\in \M_{\Delta }[v]$.
We consider the following assumptions:
\begin{Hassum}[resume]
\item \label{DiscreteMonotonicity} for $h=f,g$ the following monotonicity condition holds   
  \begin{equation*}
  \sum_{i \in \Ix}\Big(h(x_i,\eta)- h(x_i,\tilde \eta)  \Big)\Big(\eta_i-\tilde \eta_i\Big)\geq 0  \,\text{ for any } \eta,\tilde \eta\in \S(\GG_{\Delta x}) \text{ with compact support};
\end{equation*}
\item \label{condition dxdt for compact support discrete solution}
the pair $\Delta=(\Delta x,\Delta t)$ of spatial and time step is such  that $ \Delta x < c \Delta t$, for some $c>0$.
\end{Hassum}
\begin{rem}\label{rem: discrete monotonicity}  Under  \ref{monoticityassumption}, assumption \ref{DiscreteMonotonicity} is automatically satisfied. 
Indeed, for $\eta,\tilde \eta$ as in \ref{DiscreteMonotonicity}, the definition of 
$f$ yields that 
\begin{equation*}
     \begin{aligned}
       & \sum_{i\in \mathcal{I}_{\Delta x}}\Big( f(x_i,\eta)-f(x_i,\tilde \eta)\Big)(\eta_i-\tilde\eta_i)\\ 
          & = \sum_{i\in \mathcal{I}_{\Delta x}}\int_{E_i}\Big( F(x,M_{\Delta x}[\eta])- F(x,M_{\Delta x}[\tilde \eta])\Big)\Big (M_{\Delta x}[\eta](x)-M_{\Delta x}[\tilde\eta]\Big)\dd x\\ 
             & =\int_{\RR^d}\Big( F(x,M_{\Delta x}[\eta])- F(x,M_{\Delta x}[\tilde \eta])\Big)\Big (M_{\Delta x}[\eta](x)-M_{\Delta x}[\tilde\eta]\Big)\dd x\geq 0,\\
     \end{aligned}
 \end{equation*}
 where the last inequality follows from \ref{monoticityassumption}. A similar argument holds for $h = g$ in \ref{DiscreteMonotonicity}.
\end{rem}

For any   $v \in \BB(\GG_{\Delta})$, $q\in \Lambda_{\Delta }[v]$ and $\mu\in \M_{\Delta}[v]$, we notice that, for any $(i,k)\in \Ixt^*$,
\begin{equation}\label{optimalrelaxedcontrolproperty}
  I_{\Delta x}[v_{k+1}](x_{i}-\Delta t q_{i,k})+\Delta tL(x_i,q_{i,k}) =\int_{\RR^d}\Big(I_{\Delta x}[v_{k+1}](x_{i}-\Delta t a)+\Delta tL(x_i,a) \Big)\dd[\mu_{i,k}](a). 
\end{equation}
We observe that, for any $v,\tilde v \in \BB(\GG_{\Delta})$, setting $\mu\in \M_{\Delta}[v]$ and $\tilde \mu\in \M_{\Delta}[\tilde v]$,
    the following holds  
\begin{equation}\label{minproperty}
 \int_{\RR^d}\Big(I_{\Delta x}[ \tilde v_{i,k+1}](x_i- \Delta t a)+\Delta tL(x_i,a)\Big )\dd [ \mu_{i,k}-\tilde \mu_{i,k}](a)\geq 0 \quad{\text{for any $(i,k)\in \Ixt^*$}}.
\end{equation} 
For any  couple $w=(m,v) \in \S(\GG_{\Delta}) \times \BB(\GG_{\Delta})$,  we consider the operator $A_{\Delta }[w]$ defined as the set of values given by  
\begin{equation*}\label{eq:multivaluedDiscr}
	A_{\Delta }[w] \coloneqq \{ A_{i,k}[w] \, | \,  (i,k)\in\Ixt^* \},
\end{equation*}
where the map $A_{i,j}[w]$ is defined by 
\[
A_{i,k}[w]
\coloneqq
\begin{bmatrix}
	-v_{i,k} + I_{\Delta x}[v_{k+1}](x_i-\Delta t q_{i,k} ) + \Delta tL(x_i,q_{i,k}) +\Delta tf(x_i,m_k)\\
	m_{i,k+1} -  \sum_{j\in \Ix}m_{j,k}\int_{\RR^d}\beta_{i}(x_j-\Delta t a)\dd[\mu_{j,k}](a) \\
\end{bmatrix},
\]
with $q\in \Lambda_{\Delta}[v]$ and  $\mu\in \M_{\Delta}[v]$. We observe that, if $w=(m,v)$ is a solution to \eqref{eq:MFGsdiscr1}, then $A_{\Delta}[w]=0$.
For any $w=(m,v), \tilde w=(\tilde m,\tilde v) \in \S(\GG_{\Delta}) \times \BB(\GG_{\Delta})$, setting $q\in \Lambda_{\Delta}[v]$ and $\mu\in \M_{\Delta}[v]$,  we  define 
   \begin{equation*}
   \begin{aligned}
         &\langle  A_{\Delta }[ w], \tilde w \rangle_{\Delta }:= \\ &	 \sum_{k\in \It^*} \sum_{i\in\Ix} \Big(
		 -v_{i,k}  +  I_{\Delta x}[v_{k+1}](x_i - \Delta t q_{i,k}) + \Delta tL(x_i,q_{i,k}) +\Delta t f(x_i,m_k) \Big) (m_{i,k} - \tilde{m}_{i,k}) \\
		&+  \sum_{k\in \It^*} \sum_{i\in\Ix}\left( m_{i,k+1}  - \sum_{j\in\Ix} m_{j,k}\int_{\RR^d}\beta_i(x_j-\Delta t a)\dd[\mu_{j,k}](a) \right)(v_{i,k+1} - \tilde{v}_{i,k+1} ). 
   \end{aligned}
    \end{equation*}
For any $w,\ov w, \tilde w \in \S(\GG_{\Delta}) \times \BB(\GG_{\Delta})$, the following properties hold 
\begin{equation*}
\begin{aligned}
   \langle  A_{\Delta }[ w], \tilde w\rangle_{\Delta }-\langle  A_{\Delta }[\ov w], \tilde w\rangle_{\Delta }&=\langle  A_{\Delta }[ w]-A_{\Delta }[ \ov w], \tilde w\rangle_{\Delta } ,\\  \langle  A_{\Delta }[w], \tilde w\rangle_{\Delta }-\langle  A_{\Delta }[ w], \ov w\rangle_{\Delta }&=\langle  A_{\Delta }[ w], \tilde w-\ov w \rangle_{\Delta }.
\end{aligned}
\end{equation*}
The operator $A_{\Delta}$ satisfies the following monotonicity properties,  whose proof is given in \cref{Appendix}. 
\begin{prop}\label{Adelta properties}
   For any  $w=(m,v), \tilde w=(\tilde m,\tilde v) \in \S(\GG_{\Delta}) \times \BB(\GG_{\Delta})$ such that $m_k$ and $\tilde m_k$ have compact support for any $k\in \It$, setting $q\in \Lambda_{\Delta}[v]$,  $\tilde q\in \Lambda_{\Delta}[\tilde v]$, $\mu \in\M_{\Delta}[v]$ and $\tilde \mu \in\M_{\Delta}[\tilde v]$,   we have 
       \begin{equation}\label{eq:equality Adelta}
   \begin{aligned}
         \langle  A_{\Delta }[\tilde w]-A_{\Delta }[ w], \tilde w-w \rangle_{\Delta }&=  \sum_{i\in \Ix} \big((\tilde v_{i,0}-v_{i,0})(m_{i,0}-\tilde m_{i,0}) +(v_{i,N_T}-\tilde v_{i,N_T})(m_{i,N_T}-\tilde m_{i,N_T})\big)\\
+&\sum_{(i,k)\in\Ixt^*}m_{i,k}\int_{\RR^d}\Big(I_{\Delta x}[ \tilde v_{i,k+1}](x_i- \Delta t a)+\Delta tL(x_i,a)\Big )\dd [ \mu_{i,k}-\tilde \mu_{i,k}](a)\\ 
          +&\sum_{(i,k)\in\Ixt^*} m_{i,k}\int_{\RR^d}\Big(I_{\Delta x}[  v_{i,k+1}](x_i- \Delta t a)+\Delta tL(x_i,a)\Big )\dd[\tilde \mu_{i,k}-\mu_{i,k}](a)\\
            +&\sum_{(i,k)\in\Ixt^*}\Delta t \,\big(f(x_i,m_k)-f(x_i,\tilde m_k)\big)(m_{i,k}-\tilde m_{i,k}).\\ 
   \end{aligned}
    \end{equation}   
If \ref{DiscreteMonotonicity} is also satisfied, then 
           \begin{equation}\label{eq: inequality Adelta discrmon}
   \begin{aligned}
      \langle  A_{\Delta }[\tilde w]-A_{\Delta }[ w], \tilde w-w \rangle_{\Delta }&\geq   \sum_{i\in \Ix} \big((\tilde v_{i,0}-v_{i,0})(m_{i,0}-\tilde m_{i,0}) +(v_{i,N_T}-\tilde v_{i,N_T})(m_{i,N_T}-\tilde m_{i,N_T})\big).
   \end{aligned}
    \end{equation}
\end{prop}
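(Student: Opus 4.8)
The plan is to prove first the algebraic identity \eqref{eq:equality Adelta} by a direct expansion of $\langle A_\Delta[\tilde w]-A_\Delta[w],\tilde w-w\rangle_\Delta$, and then to obtain \eqref{eq: inequality Adelta discrmon} by checking that, apart from the boundary contributions, each summand on the right-hand side of \eqref{eq:equality Adelta} is nonnegative. Throughout, the hypothesis that $m_k$ and $\tilde m_k$ have compact support for every $k\in\It$ will be used to guarantee that all the sums over $i\in\Ix=\mathbb Z^d$ are in fact finite sums; since the $\mathbb Q_1$ basis $\{\beta_i\}$ has local support, every evaluation of an interpolation operator $I_{\Delta x}[\cdot]$ is itself a finite sum, so all the interchanges of summation below are legitimate.

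To expand $\langle A_\Delta[\tilde w]-A_\Delta[w],\tilde w-w\rangle_\Delta$ I would use the two bilinearity relations stated just before the proposition, together with the observation that $\langle A_\Delta[w],w\rangle_\Delta=0$ (every factor $m_{i,k}-\tilde m_{i,k}$ and $v_{i,k+1}-\tilde v_{i,k+1}$ in the definition of $\langle\,\cdot\,,\,\cdot\,\rangle_\Delta$ vanishes when $\tilde w=w$); this reduces the quantity to a combination of $\langle A_\Delta[w],\tilde w\rangle_\Delta$ and $\langle A_\Delta[\tilde w],w\rangle_\Delta$. I would then write each of these out from the definition and replace the minimised quantity $I_{\Delta x}[v_{k+1}](x_i-\Delta t q_{i,k})+\Delta tL(x_i,q_{i,k})$, as well as the one built on $\tilde v,\tilde q,\tilde\mu$, by its relaxed form $\int_{\RR^d}\big(I_{\Delta x}[v_{k+1}](x_i-\Delta t a)+\Delta tL(x_i,a)\big)\dd[\mu_{i,k}](a)$ using \eqref{optimalrelaxedcontrolproperty}.

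The heart of the argument is a discrete integration by parts. Writing $I_{\Delta x}[v_{k+1}](y)=\sum_{l\in\Ix}v_{l,k+1}\beta_l(y)$ in the terms of the Hamilton--Jacobi residual that are paired against $m_{i,k}$ and interchanging the order of summation, one recognises
\[
\sum_{i\in\Ix}m_{i,k}\int_{\RR^d}I_{\Delta x}[v_{k+1}](x_i-\Delta t a)\dd[\mu_{i,k}](a)=\sum_{l\in\Ix}v_{l,k+1}\sum_{j\in\Ix}m_{j,k}\int_{\RR^d}\beta_l(x_j-\Delta t a)\dd[\mu_{j,k}](a),
\]
which is precisely cancelled by the transport term appearing in the continuity-equation residual when it is paired against $v$. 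What survives from the interpolation terms, after matching $v$ against $\tilde v$, are exactly the integrals against $\mu_{i,k}-\tilde\mu_{i,k}$ and $\tilde\mu_{i,k}-\mu_{i,k}$ occurring in \eqref{eq:equality Adelta}. Simultaneously, reindexing $k\mapsto k+1$ in the terms $m_{i,k+1}(v_{i,k+1}-\tilde v_{i,k+1})$ and combining them with the terms $-v_{i,k}(m_{i,k}-\tilde m_{i,k})$ (and the corresponding tilde terms) yields a telescoping sum in the time index, leaving only the contributions at $k=0$ and $k=N_T$, which are the boundary terms of \eqref{eq:equality Adelta}; the $\Delta t f$-terms reorganise into $\Delta t\big(f(x_i,m_k)-f(x_i,\tilde m_k)\big)(m_{i,k}-\tilde m_{i,k})$. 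Collecting everything gives \eqref{eq:equality Adelta}.

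For \eqref{eq: inequality Adelta discrmon} it then suffices to check that the last three sums in \eqref{eq:equality Adelta} are nonnegative. Since $m\in\S(\GG_\Delta)$ we have $m_{i,k}\geq0$, and because $\mu\in\M_\Delta[v]$, so that $\supp(\mu_{i,k})\subseteq\Lambda_{i,k}[v]$, the factor $\int_{\RR^d}\big(I_{\Delta x}[\tilde v_{k+1}](x_i-\Delta t a)+\Delta tL(x_i,a)\big)\dd[\mu_{i,k}-\tilde\mu_{i,k}](a)$ is nonnegative for every $(i,k)\in\Ixt^*$ by \eqref{minproperty}, and likewise for the symmetric term with $\tilde\mu_{i,k}-\mu_{i,k}$; hence both control sums are $\geq0$. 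For the coupling sum, \ref{DiscreteMonotonicity} applied, for each fixed $k\in\It^*$, to the compactly supported grid functions $\nu=m_k$ and $\tilde\nu=\tilde m_k$ gives $\sum_{i\in\Ix}\big(f(x_i,m_k)-f(x_i,\tilde m_k)\big)(m_{i,k}-\tilde m_{i,k})\geq0$, and summing in $k$ with the positive weight $\Delta t$ shows that the coupling sum is $\geq0$ as well. The step I expect to require the most care is the discrete integration by parts: one must keep precise track of which time layer each surviving term lives on, so that the telescoping produces exactly the $k=0$ and $k=N_T$ boundary contributions with the correct signs, and one must make sure the rearrangements of the sums over $\Ix$ are justified — which is exactly where the compact-support hypothesis on $m_k,\tilde m_k$ enters.
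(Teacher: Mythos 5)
Your plan is essentially the paper's own proof: expand the duality product directly, replace the minimized quantities by their relaxed forms via \eqref{optimalrelaxedcontrolproperty}, commute the sums (finite by the compact supports, with $I_{\Delta x}$ a locally finite sum), telescope in time to isolate the $k=0$ and $k=N_T$ contributions, and then obtain \eqref{eq: inequality Adelta discrmon} from \eqref{minproperty}, the nonnegativity of the densities and \ref{DiscreteMonotonicity}; so the computational core is sound and matches the paper. Two points, however, need care in the write-up. First, your preliminary reduction through $\langle A_{\Delta}[w],w\rangle_{\Delta}=0$ and the two displayed relations is a sign trap: since the second slot of the paper's pairing enters through the differences $m_{i,k}-\tilde m_{i,k}$, $v_{i,k+1}-\tilde v_{i,k+1}$ (first argument minus second), the literal reduction yields $-\langle A_{\Delta}[\tilde w],w\rangle_{\Delta}-\langle A_{\Delta}[w],\tilde w\rangle_{\Delta}$, whose expansion is the \emph{negative} of the right-hand side of \eqref{eq:equality Adelta}; the paper avoids this by expanding the left-hand side directly as the difference of the two residuals multiplied by $m_{i,k}-\tilde m_{i,k}$ and $v_{i,k+1}-\tilde v_{i,k+1}$, and you must land on that orientation, otherwise the final inequality comes out reversed. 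Second, the careful bookkeeping produces the weight $\tilde m_{i,k}$ (not $m_{i,k}$) in the third sum of \eqref{eq:equality Adelta} — this is the form actually used in the proof of \cref{uniquenessMFGsd2}, the displayed $m_{i,k}$ there being a typo — so for the inequality you also need $\tilde m_{i,k}\geq 0$; moreover, the nonnegativity of each control term comes from the support property of the measure attached to the \emph{same} value function as the integrated cost (for the $\tilde v$-cost term it is $\supp(\tilde \mu_{i,k})\subseteq \Lambda_{i,k}[\tilde v]$ that makes \eqref{minproperty} work), not from $\supp(\mu_{i,k})\subseteq\Lambda_{i,k}[v]$ as you stated; since you invoke \eqref{minproperty} as a black box this does not break the argument, but the justification should be corrected.
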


\begin{rem}\label{rem: monotone descrete operator}
 Under \ref{DiscreteMonotonicity}, it follows that $A_{\Delta}$ combining with the initial and terminal conditions is \textit{monotone}, i.e. for any   $w=(m,v), \tilde w=(\tilde m,\tilde v) \in \S(\GG_{\Delta}) \times \BB(\GG_{\Delta})$ as in \cref{Adelta properties} and, in addition, such that,  for any $i\in \Ix$, $m_{i,0}=\tilde m_{i,0}$,    
 $v_{i,N_T}=g(x_i,m_{N_T})$  and $\tilde v_{i,N_T}=g(x_i,\tilde m_{N_T})$, 
 \begin{equation*}
      \langle  A_{\Delta }[\tilde w]-A_{\Delta }[ w], \tilde w-w \rangle_{\Delta }\geq 0.
 \end{equation*}
 \end{rem}
\begin{prop}\label{existenceMFGsd1}
     Let us assume \onlyref{continuity,initialdensity,ass:lagrangian} and $\Delta=(\Delta x,\Delta t)>0$ such that \ref{condition dxdt for compact support discrete solution} holds. Then there exists at least one solution to \eqref{eq:MFGsdiscr1}.
\end{prop}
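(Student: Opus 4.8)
The plan is to rephrase the scheme \eqref{eq:MFGsdiscr1} as a fixed-point problem for a set-valued map on a compact convex space of relaxed controls, and then to invoke the Kakutani--Fan--Glicksberg theorem. The two elementary building blocks are the one-directional maps contained in the scheme. First, given a family $\mu=(\mu_{i,k})_{(i,k)\in\Ixt^*}$ with $\supp(\mu_{i,k})\subseteq\ov B_\infty(0,C_H)$ for every $(i,k)$, the forward recursion \eqref{eq:CEdiscr} with datum $m_{i,0}=\int_{E_i}\dd[m_0^*](x)$ determines a unique $m=m[\mu]\in\S(\GG_\Delta)$; it conserves mass (the $\mathbb{Q}_1$ basis functions sum to $1$) and, by \cref{SLCEproperties} together with \ref{condition dxdt for compact support discrete solution}, is supported in a fixed ball $\ov B(0,C)$ with $C$ independent of $\mu$. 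Second, given such an $m$, the backward recursion $v_{i,N_T}=g(x_i,m_{N_T})$, $v_{i,k}=\inf_{a\in\ov B_\infty(0,C_H)}\{I_{\Delta x}[v_{k+1}](x_i-\Delta t a)+\Delta t L(x_i,a)\}+\Delta t f(x_i,m_k)$ determines a unique $v=v[m]\in\BB(\GG_\Delta)$; this is the HJB part of \eqref{eq:MFGsdiscr1} for any minimizing selection $q\in\Lambda_\Delta[v]$, the identity \eqref{optimalrelaxedcontrolproperty} ensuring that the value does not depend on the chosen optimal control.

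I would work on $\M\coloneqq\prod_{(i,k)\in\Ixt^*}\P(\ov B_\infty(0,C_H))$ endowed with the product of the $d_1$-topologies: since $\ov B_\infty(0,C_H)$ is compact (here \ref{ass:lagrangian} enters, so that $C_H$ is well defined and $L$ is continuous), each factor is convex and $d_1$-compact, and $\M$ is a nonempty, convex, compact, metrizable subset of a locally convex space. Define $\Phi\colon\M\rightrightarrows\M$ by $\Phi(\mu)\coloneqq\M_\Delta[v[m[\mu]]]$. For any $v$, the set $\Lambda_{i,k}[v]$ is a nonempty compact subset of $\ov B_\infty(0,C_H)$, being the set of minimizers of a continuous function over a compact set, so $\Phi(\mu)=\prod_{(i,k)}\P(\Lambda_{i,k}[v[m[\mu]]])$ is nonempty (put Dirac masses at minimizers), convex and compact. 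The composition $\mu\mapsto v[m[\mu]]$ is continuous into $\BB(\GG_\Delta)$ with componentwise convergence: $\mu\mapsto m[\mu]$ is continuous because, thanks to the support bound, each $m_{i,k}$ is a finite sum of terms $m_{j,k-1}\int\beta_i(x_j-\Delta t a)\dd\mu_{j,k-1}(a)$ depending continuously (indeed affinely) on $m_{k-1}$ and $\mu_{k-1}$; and $m\mapsto v[m]$ is continuous by backward induction, using \ref{continuity} (continuity of $F,G$, hence of $f,g$ and of $m\mapsto M_{\Delta x}[m]$ into $\P_1(\RR^d)$ on the compactly supported set), the continuity of $I_{\Delta x}$, and the fact that the infimum of a jointly continuous function over the fixed compact ball $\ov B_\infty(0,C_H)$ is continuous in the parameters; once again, for $x_i$ in a bounded region only finitely many components of $v_{k+1}$ enter the update of $v_{i,k}$, since the controls are bounded by $C_H$.

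The crucial step is the closed graph of $\Phi$. Let $\mu^n\to\mu$ in $\M$ and $\mu'^n\in\Phi(\mu^n)$ with $\mu'^n\to\mu'$; by the previous paragraph $v^n\coloneqq v[m[\mu^n]]\to v\coloneqq v[m[\mu]]$ componentwise. For fixed $(i,k)$ the functions $a\mapsto I_{\Delta x}[v^n_{k+1}](x_i-\Delta t a)+\Delta t L(x_i,a)$ converge uniformly on $\ov B_\infty(0,C_H)$ to the limit function, because on the bounded set $\{x_i-\Delta t a\colon|a|_\infty\le C_H\}$ the interpolation reduces to a finite sum of bounded $\mathbb{Q}_1$ basis functions with coefficients $(v^n_{k+1})_j\to(v_{k+1})_j$; hence, by the standard upper semicontinuity of the set of minimizers under uniform convergence of the objective, $\limsup_n\Lambda_{i,k}[v^n]\subseteq\Lambda_{i,k}[v]$, and since $\supp(\mu'^n_{i,k})\subseteq\Lambda_{i,k}[v^n]$ and $\mu'^n_{i,k}\rightharpoonup\mu'_{i,k}$, a routine argument gives $\supp(\mu'_{i,k})\subseteq\Lambda_{i,k}[v]$, i.e. $\mu'\in\Phi(\mu)$. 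Kakutani--Fan--Glicksberg then produces $\mu^*\in\Phi(\mu^*)=\M_\Delta[v^*]$, with $m^*\coloneqq m[\mu^*]$ and $v^*\coloneqq v[m^*]$; choosing any $q^*\in\Lambda_\Delta[v^*]$, the triple $(m^*,v^*)$ together with $q^*$ and $\mu^*$ satisfies every equation of \eqref{eq:MFGsdiscr1}.

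I expect the main obstacle to be the closed-graph step: it is precisely here that the relaxed, measure-valued formulation of the controls is indispensable, since it convexifies the minimizer sets $\Lambda_{i,k}[v]$, which in general are not convex, thereby making $\Phi$ convex-valued; and it is here that the locality of the $\mathbb{Q}_1$ interpolation (only finitely many basis functions are active on a bounded set) upgrades componentwise convergence $v^n\to v$ to uniform convergence of the objectives, which is what upper semicontinuity of the optimal controls requires. The remaining points --- mass conservation and the uniform support bound via \cref{SLCEproperties} and \ref{condition dxdt for compact support discrete solution}, and the continuity of the two building-block maps --- are routine.
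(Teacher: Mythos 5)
Your proposal is correct and follows essentially the same route as the paper: a Kakutani-type fixed-point argument on the compact convex set of relaxed controls supported in $\ov B_\infty(0,C_H)$, with the map $\mu\mapsto m\mapsto v\mapsto \M_\Delta[v]$ and the closed-graph/upper-hemicontinuity step established exactly as in the paper via convergence of the value functions, uniform convergence of the discrete objectives, upper semicontinuity of the minimizer sets (the paper's \cref{Compact nostrict Lemma 2.4 CapuzzoDolcetta}), and the support-convergence property of narrow limits. Your write-up merely spells out in more detail the continuity of the two building-block maps, which the paper treats more briefly.
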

\begin{proof}
We prove the existence of a solution to \eqref{eq:MFGsdiscr1} by applying
Kakutani’s fixed point theorem to a suitably defined multivalued map $\Psi:X\to 2^{X}\setminus \emptyset$,  as in  \cite{AshrafyanGomes}. We define the set $X\coloneqq\{\mu=(\mu_{i,k})_{(i,k)\in \Ixt^*}  \, | \, \mu_{i,k}\in \P_1(\RR^d), \, \supp(\mu_{i,k})\subseteq \ov B_\infty(0,C_H) \}$. It is compact and convex.
Given $\mu\in X$, let $m\in \S(\GG_{\Delta})$ be defined by  \eqref{eq:CEdiscr} and  $m_{\Delta}:=M_{\Delta}[m]$. Given $m_{\Delta}$,  let $v\in \BB(\GG_{\Delta})$ be defined by  \eqref{eq:HJBdiscr} and we take the set $\Lambda_{i,k}[v]$ defined by \eqref{optimal set}. Finally, we define 
\begin{equation*}
    \Psi(\mu)\coloneqq\{\ov \mu=(\ov \mu_{i,k})_{(i,k)\in \Ixt^*} \, | \, \ov \mu_{i,k}\in \P_1(\RR^d), \, \supp(\ov \mu_{i,k})\subseteq  \Lambda _{i,k}[v] \}.
\end{equation*}
We note that $\Psi(\mu)$ is a compact and convex set. The map  $\Psi$ is upper hemicontinuous, i.e. for any $\mu^n,\mu \in X$ such that $\mu^n \rightharpoonup \mu $ and for any $\ov \mu^n \in \Psi(\mu^n)$ such that  $\ov \mu^n \rightharpoonup \ov \mu$ for a some $ \ov \mu$, we have that $\ov \mu \in \Psi (\mu)$. To prove this property, we start by fixing $(i,k)\in \Ixt^*$.
According to \cite[Proposition 5.1.8]{AmbrosioGigliSavar}, the convergence $\ov \mu_{i,k} ^n \rightharpoonup \ov \mu_{i,k} $ implies that 
\begin{equation}\label{due to covnergennce of ovBn}
    \forall \ov a_{i,k} \in \supp(\ov \mu _{i,k}), \, \exists \,\ov a_{i,k}^n \in \supp(\ov \mu ^n_{i,k}) \, \text{ s.t. }\,  \lim_{n\to \infty}\ov a_{i,k}^n= \ov a_{i,k}. 
\end{equation}
As before, given $\mu^n$, let $m^n\in \S(\GG_\Delta)$ be defined by  \eqref{eq:CEdiscr} and  $m^n_{\Delta}:=M_{\Delta}[m^n]$. Given $m^n_{\Delta}$, let $v^n\in \BB(\GG_{\Delta})$ be defined by \eqref{eq:HJBdiscr}  and  $v^n_{\Delta}:=V_{\Delta}[v^n]$. In the same way, given $\mu$, we define $(m,m_{\Delta},v, v_{\Delta})$. We observe that $v^n_{\Delta}$ converges to $v_{\Delta}$ locally uniformly in $Q_T$. Indeed, according to  \cite[Proposition 7.1.5]{AmbrosioGigliSavar},  $\supp (\mu^n)_{i,k}\subseteq \ov B_\infty(0,C_H)$ and  $\mu ^n_{i,k} \rightharpoonup  \mu_{i,k}$ imply that  $\mu^n_{i,k}\to \mu_{i,k}$. Therefore, by definition of $m^n_{\Delta}$ and $m_{\Delta}$, we have $m^n_{\Delta}( t)\to m_{\Delta}(t)$ in $\P_1(\RR^d)$ for any $t \in [0,T]$. Combing with \ref{continuity}, this implies the locally uniform convergence of $v^n_{\Delta}$ to $v_{\Delta}$. We define $\omega^n,\omega: \ov B_{\infty}(0, C_H)\to \RR$ such that $ \omega^n(a)\coloneqq I_{\Delta x}[v^n_{k+1}](x_i-\Delta t a)+\Delta t L(x_i,a)$ and   $\omega(a)\coloneqq I_{\Delta x}[v_{k+1}](x_i-\Delta t a)+\Delta t L(x_i,a)$. We observe that $\omega^n$ converges uniformly to $\omega$ because $\sup_{a\in \ov B_{\infty}(0,C_H)}|\omega^n(a)-\omega(a)|=\sup_{a\in \ov B_{\infty}(0,C_H)}|v^n_{\Delta}(x_i-\Delta t a,t_k)-v_{\Delta}(x_i-\Delta t a,t_k)|$, due to \eqref{interperr}. 
Applying \cref{Compact nostrict Lemma 2.4 CapuzzoDolcetta}, we obtain that
\begin{equation}\label{due to covnergennce of controls}
\begin{aligned}
      &  \forall a^n_{i,k} \in \Lambda_{i,k}[v^n],  \text{ for any convergent subsequence (at least one exists), still denoted by $ a^n_{i,k}$}, \\ & \exists a^*_{i,k}\in  \Lambda_{i,k}[v] \,  \text{ s.t. } \,  \lim_{n\ \to \infty} a_{i,k}^{n}=  a^*_{i,k}.
\end{aligned}
\end{equation}
We aim to show that $\ov \mu \in \Psi(\mu)$,  i.e. for any $(i,k)\in \Ixt^*$ we have $\supp(\ov \mu_{i,k})\in \Lambda_{i,k}[v]$. Let $\ov a_{i,k}\in \supp(\ov \mu_{i,k})$. Due to \eqref{due to covnergennce of ovBn}, there exists  $\overline{a}^n_{i,k}\in \supp(\overline{\mu}_{i,k}^n)\subseteq\Lambda_{i,k}[v^n]$ s.t. $\lim_{n \to \infty}\ov a^n_{i,k}= \ov a_{i,k}$.   In addition, due to \eqref{due to covnergennce of controls}, there exists $ a^*_{i,k}\in  \Lambda_{i,k}[v]$ s.t. $\lim_{n\ \to \infty} \ov a_{i,k}^{n}=  a^*_{i,k}$ (where we passed to a further subsequence of $\{\overline{a}_{i,k}^n\}_{n}$ without changing the notation). For the uniqueness of the limit, $\ov a_{i,k}=a^*_{i,k} \in \Lambda_{i,k}[v]$. Finally, due to Kakutani's fixed-point theorem, there exists a fix point of the map $\Psi$.
\end{proof}
\cref{convergenceschemeMFGsD1} proves the convergence of the scheme \eqref{eq:MFGsdiscr1} to a weak solution to   the MFGs system, and implicitly also establishes the existence of such a solution. Its proof can be obtained using techniques analogous to those in  \cite[Proposition 7.7]{AshrafyanGomes}.
In the following, we will use {\em u.t.s.}
to indicate that the convergence holds up to subsequences.

\begin{theo}\label{convergenceschemeMFGsD1}
  Let us assume  \onlyref{continuity,C2bounded,initialdensity,ass:lagrangian}.  Consider a sequence  $\Delta_n=(\Delta x_n,\Delta t_n)>0$ such that $(\Delta x_n,\Delta t_n)\to 0$ and $\frac{\Delta x_n}{\Delta t_n}\to 0$ as $n\to \infty$, and assume that \onlyref{DiscreteMonotonicity,condition dxdt for compact support discrete solution} hold for any $(\Delta x_n, \Delta t_n)$.  Let $(m^n,v^n)$  be a solution to \eqref{eq:MFGsdiscr1} for the corresponding grid parameters $(\Delta x_n,\Delta t_n)$. Let $m_{\Delta_n}=M_{\Delta_n}[m^n]$ and $v_{\Delta_n}=V_{\Delta_n}[v^n]$.
Then there  exists  a weak solution $(m^*,v^*)$ to    the MFGs system such that, u.t.s., $m_{\Delta_n}(t)\to m^*(t)$ in $\P_1(\RR^d)$ for any $t\in [0,T]$,  and $v_{\Delta_n}\rightharpoonup^* v^*$ in $L^\infty(Q_T)$. 
\end{theo}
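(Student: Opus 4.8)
The plan is to combine compactness estimates for the discrete solutions with the monotonicity inequality \eqref{eq: inequality Adelta discrmon} in order to pass to the limit. First I would establish the a priori bounds that yield compactness. By \cref{properties decrete value function}(iii)--(iv), the family $v_{\Delta_n}$ is uniformly bounded and uniformly Lipschitz in $x$, so by Banach--Alaoglu it converges, u.t.s., weakly-$*$ in $L^\infty(Q_T)$ to some $v^*$, with $v^*$ also spatially Lipschitz. For the measures, \cref{SLCEproperties} (applicable by \ref{condition dxdt for compact support discrete solution}) gives a uniform compact support $\ov B(0,C)$ for all $m^n_k$; together with the fact that the discrete transport in \eqref{eq:CEdiscr} moves mass by at most $\Delta t_n C_H$ per step (since $\supp(\mu_{j,k})\subseteq\ov B_\infty(0,C_H)$), this yields an equi-Lipschitz (in $t$, w.r.t.\ $d_1$) and tight family $m_{\Delta_n}(\cdot)$. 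By Arzel\`a--Ascoli on $\CC([0,T];\P_1(\RR^d))$, u.t.s.\ $m_{\Delta_n}(t)\to m^*(t)$ in $\P_1(\RR^d)$ uniformly in $t$, with $m^*(t)\in\P_1(\RR^d)$ supported in $\ov B(0,C)$; the $L^1$ convergence follows from the uniform bound on densities and equi-integrability on the fixed compact set.

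Next I would identify the limit as a weak solution in the sense of \cref{weak solution}, i.e.\ verify $\langle\mathcal{A}[\varphi],\varphi-w^*\rangle\ge 0$ for every test pair $\varphi=(\phi,\psi)\in\mathcal{D}$. The key is to discretize the test function: for $\varphi=(\phi,\psi)\in\mathcal{D}$, set $\tilde w^n=(\hat\phi,\hat\psi)$ evaluated on $\GG_{\Delta_n}$, and apply \cref{Adelta properties} with $w=w^n$ (the scheme solution, for which $A_{\Delta_n}[w^n]=0$) and $\tilde w=\tilde w^n$. Since $w^n$ solves \eqref{eq:MFGsdiscr1}, the left side becomes $-\langle A_{\Delta_n}[\tilde w^n],\tilde w^n-w^n\rangle_{\Delta_n}$, and \eqref{eq: inequality Adelta discrmon} (applied with the roles of $w,\tilde w$ swapped) gives
\begin{equation*}
\langle A_{\Delta_n}[\tilde w^n],\tilde w^n-w^n\rangle_{\Delta_n}\ge \sum_{i\in\Ix}\big((\hat\psi_{i,0}-v^n_{i,0})(m^n_{i,0}-\hat\phi_{i,0})+(v^n_{i,N_T}-\hat\psi_{i,N_T})(m^n_{i,N_T}-\hat\phi_{i,N_T})\big).
\end{equation*}
The boundary terms are handled using the constraints built into $\mathcal{D}$: $\phi(\cdot,0)=m_0^*$ matches $m^n_{i,0}$ up to $\mathcal{O}(\Delta x_n)$, so the $t=0$ term is $o(1)$; and $\psi(\cdot,T)=G(\cdot,\phi(T))$ together with $v^n_{i,N_T}=g(x_i,m^n_{N_T})=G(x_i,M_{\Delta x_n}[m^n_{N_T}])$, the convergence $M_{\Delta x_n}[m^n_{N_T}]\to m^*(T)$ and $\phi(T)\to\phi(T)$, and \ref{continuity}, forces the $t=T$ term to converge to $\int_{\RR^d}(\psi(x,T)-G(x,m^*(T)))(m^*(x,T)-\phi(x,T))\dd x$, which I would argue is nonnegative (in the limit) or, more simply, absorb using the sign structure — the cleanest route is to note the $t=T$ boundary term converges to $\int(\psi(x,T)-v^*(x,T))(\dots)$ and is controlled since $v^n_{\Delta_n}(T)$ and $g(x_i,m^n_{N_T})$ both converge to $G(\cdot,m^*(T))$, while $\phi(T)\to$ itself; the precise bookkeeping mirrors \cite[Proposition 7.7]{AshrafyanGomes}.

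Finally I would show $\langle A_{\Delta_n}[\tilde w^n],\tilde w^n-w^n\rangle_{\Delta_n}\to \langle\mathcal{A}[\varphi],\varphi-w^*\rangle$. Writing out $\langle A_{\Delta_n}[\tilde w^n],\tilde w^n-w^n\rangle_{\Delta_n}$, the first block is a Riemann sum of $\big(\hat\psi_{i,k}-S_{\Delta_n}[M_{\Delta_n}[\phi]](\hat\psi_{k+1},i,k)\big)/\Delta t_n$ against $(\hat\phi_{i,k}-m^n_{i,k})$; by the consistency estimate \cref{properties decrete value function}(ii) (valid since $\|D_x\psi\|_{L^\infty}\le\ov C_{\mathrm b}$) this quotient converges uniformly on compacts, as $\Delta x_n/\Delta t_n\to 0$, to $\partial_t\psi+H(x,D_x\psi)-F(x,\phi(t))$, i.e.\ to the first component of $\mathcal{A}[\varphi]$. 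Combined with $M_{\Delta x_n}[\hat\phi_k]\to\phi(\cdot,t_k)$, $M_{\Delta_n}[m^n]\to m^*$ in $\P_1$, and $\hat\psi_{k+1}$ close to $\psi_{k+1}$, this block converges to $\int_{Q_T}(\partial_t\psi-H+F)(\phi-m^*)\dd x\dd t$. The second block, after a discrete summation by parts (using the compact support of $m^n_k$ from \cref{SLCEproperties} so there are no boundary contributions in $i$), is the consistent SL discretization of the continuity operator tested against $\psi-v^*$; here one uses that $\beta_i$ is the $\mathbb Q_1$ basis so that $\sum_i\psi(x_i)\beta_i(x_j-\Delta t_n a)=\psi(x_j-\Delta t_n a)+\mathcal{O}(\Delta x_n^2)$, expand to first order in $\Delta t_n$ to recover $-\diver(D_pH(x,D_x\psi)m^*)$, and pass to the limit using the $\P_1$-convergence of $m_{\Delta_n}$ and the weak-$*$ convergence of $v_{\Delta_n}$ together with the spatial regularity of $\psi$. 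Assembling the two blocks gives $\langle\mathcal{A}[\varphi],\varphi-w^*\rangle\ge 0$, so $w^*=(m^*,v^*)$ is a weak solution. The main obstacle is the last step: the second (continuity-equation) block pairs the \emph{measure} increments against the \emph{only weakly-$*$ convergent} $v_{\Delta_n}$, so one cannot simply pass the two limits independently — the argument must exploit that, after summation by parts, $v_{\Delta_n}$ enters through a fixed smooth test function $\psi$ plus a controlled remainder, and that the multivalued $\mu^n\in\M_{\Delta_n}[v^n]$ only appears through the consistent transport weights, which converge because the relaxed controls are supported in the fixed ball $\ov B_\infty(0,C_H)$ and the consistency estimate does not see $\mu^n$ at leading order.
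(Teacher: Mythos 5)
Your overall strategy coincides with the paper's: discretize the test pair, use $A_{\Delta_n}[w^n]=0$ together with \cref{Adelta properties} to reduce $\langle A_{\Delta_n}[\varphi^n],\varphi^n-w^n\rangle_{\Delta_n}$ to boundary terms from below, and identify its limit with $\langle\mathcal A[\varphi],\varphi-w^*\rangle$ via consistency. The genuine gap is in the terminal boundary term. After \eqref{eq: inequality Adelta discrmon} you are left with $\sum_i\big(g(x_i,m^n_{N_T})-G(x_i,\phi(T))\big)(m^n_{i,N_T}-\phi^n_{i,N_T})$, and neither of your two suggestions disposes of it: passing to the limit gives $\int\big(G(x,m^*(T))-G(x,\phi(T))\big)\dd\big[m^*(T)-\phi(T)\big](x)$, whose sign would require the continuous Lasry--Lions monotonicity \ref{monoticityassumption}, which is \emph{not} among the hypotheses of the theorem (only the discrete condition \ref{DiscreteMonotonicity} is assumed); and the alternative ``both converge to $G(\cdot,m^*(T))$, hence controlled'' is not an argument, since the first factor converges uniformly to $G(x,m^*(T))-G(x,\phi(T))$, which is not zero in general, while the second factor only converges weakly. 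The missing idea (the paper's \emph{Step 2}) is to add and subtract $g(x_i,\phi^n_{N_T})$ and apply \ref{DiscreteMonotonicity} for $g$ \emph{at the discrete level}; this reduces the terminal term to $\mathcal{E}_1(\Delta_n)=\sum_i\big(g(x_i,\phi^n_{N_T})-G(x_i,\phi(T))\big)(m^n_{i,N_T}-\phi^n_{i,N_T})$, which vanishes because $g(x_i,\phi^n_{N_T})=G(x_i,\phi_{\Delta_n}(T))\to G(x_i,\phi(T))$ uniformly on the relevant compact set and $\sum_i|m^n_{i,N_T}-\phi^n_{i,N_T}|\le 2$.

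Two further points. First, you discretize $\phi$ by pointwise evaluation: this is mis-scaled, because the grid values $m^n_{i,k}$ are cell masses (summing to one) while $\phi(x_i,t_k)$ are density values; with $\hat\phi$ the pair $\tilde w^n$ does not belong to $\S(\GG_{\Delta_n})\times\BB(\GG_{\Delta_n})$, so \cref{Adelta properties} does not apply and the boundary sums are not even of order one. The paper takes cell averages $\phi^n_{i,k}=\int_{E_i}\phi(x,t_k)\dd x$, which also makes the $t=0$ boundary term vanish exactly (not merely $\mathcal{O}(\Delta x_n)$), since $\phi(\cdot,0)=m_0^*$. Second, in the continuity block you correctly flag that $v_{\Delta_n}$ only converges weakly-$*$, but your resolution is not the operative one, and the controls you invoke are the wrong ones: in $\langle A_{\Delta_n}[\varphi^n],\varphi^n-w^n\rangle_{\Delta_n}$ the relaxed controls appearing in the transport weights are those of the discretized test function, $\nu^n\in\M_{\Delta_n}[\psi^n]$, not $\M_{\Delta_n}[v^n]$. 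What actually closes the argument is (i) the uniform spatial Lipschitz bound of \cref{properties decrete value function}, which makes the difference quotients of $l_{\Delta_n}=v_{\Delta_n}-\psi$ bounded and permits the integration by parts that throws the divergence onto $\phi\,D_pH(x,D_x\psi)$, and (ii) the identification $\nu^{x,t}_{\Delta_n}\to\delta_{D_pH(x,D_x\psi(x,t))}$, which does not follow from the controls lying in a fixed ball (that only gives compactness) but from the stability of minimizers under uniform convergence of the discrete cost (\cref{Compact nostrict Lemma 2.4 CapuzzoDolcetta}, Steps 4--5 of the paper's proof), combined with dominated convergence; your statement that ``the consistency estimate does not see $\mu^n$ at leading order'' is not a substitute for these two steps.
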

\begin{proof}

We define $w^n\coloneqq (m^n,v^n)$ and $w_{\Delta_n}\coloneqq(m_{\Delta_n},v_{\Delta_n})$.
Our goal is to prove that there exists a  $w^*=(m^*,v^*)$, weak u.t.s. limit of $w_{\Delta_n}$, 
such that
\begin{equation}\label{eq:tesi}
    \langle \mathcal A [ \varphi], \varphi-w^* \rangle \geq 0 \qquad \text{ for any } \varphi \in \mathcal{D}.
\end{equation}
 To prove \eqref{eq:tesi}, we will show that  for any $\varphi=(\phi,\psi)\in \mathcal{D}$ the following hold
 \begin{equation}\label{eq:tesi1}
\langle \mathcal A [ \varphi], \varphi-w_{\Delta _n} \rangle \to \langle \mathcal A [ \varphi], \varphi-w^* \rangle,\qquad  
 \end{equation}
 and 
 \begin{equation}\label{eq:tesi2}
     \mathcal{E}_1(\Delta_n)\leq \langle  A_{\Delta _n}[ \varphi^n], \varphi^n-w^n \rangle_{\Delta_n} = \langle \mathcal A [ \varphi], \varphi-w_{\Delta _n} \rangle + \mathcal{E}_2(\Delta_n)+\mathcal{E}_3(\Delta_n),
 \end{equation}
where  
   $\mathcal{E}_i(\Delta_n) \to 0$ as $n\to \infty$, for $i=1,2,3$,  $\varphi^n:=(\phi^n,\psi^n) \in \S(\GG_{\Delta_n})\times \BB(\GG_{\Delta_n})$ and
\begin{equation*}
    \phi^n_{i,k}\coloneqq \int_{E_i}\phi(x,t_k)\dd x, \qquad \psi^n_{i,k}\coloneqq \psi(x_i,t_k), \qquad \text{for any } (i,k)\in\Ixtn.
\end{equation*}
We define the  extensions $\phi_{\Delta_n}\coloneqq M_{\Delta_n}[\phi^n]$  and $ \psi_{\Delta_n}\coloneqq V_{\Delta_n}[\psi^n]$.
We observe that $\phi(\cdot,t)$, $\psi(\cdot,t)$, $\phi_{\Delta _n}(\cdot,t),\psi_{\Delta _n}(\cdot,t)$,
and $m_{\Delta_n}(\cdot,t)$
have  support in $\ov B(0,R_1)$, with $R_1 > C>0$ and $C$ as in \cref{SLCEproperties}.
Moreover, by the regularity of $\phi(\cdot,t_k)$, we have
\begin{equation}\label{eq:step5a}
    \sup_{k \in \Itn}d_1(\phi_{\Delta_n}(t_k),\phi(t_k))=\mathcal{O}(\Delta x_n),
\end{equation}
and  the following error estimate of the midpoint quadrature rule on $E_i$
\begin{equation}\label{eq:step5b}
     \sup_{(i,k)\in \Ixtn}|
     \phi_{\Delta_n}(x_i,t_k)-\phi(x_i,t_k)|
   =\mathcal{O}(\Delta x_n^{2}).
\end{equation}
We organize the proof in the following steps, where we will make us of the following definitions.
We set
 $\Jxn\coloneqq\{i\in\Ixn | x_i\in \ov B(0,R_1+\sqrt{d}TC_H) \}$,
which implies $\sum_{i \in \Jxn}\beta_i (x_j- \Delta t_n a)=1$, for any   $a \in \ov B_{\infty}(0,C_H)$ and  $x_j \in \ov B (0,R_1)$. Given $\eta^n\in\M_{\Delta_n}[\psi^n]$, we define $\eta_{\Delta_n}^{x,t}\in \P(\RR^d)$ as 
\begin{equation*}
\begin{aligned}
\eta_{\Delta_n}^{x,t}\coloneqq \eta^n_{i,k} \quad \text{if }(x,t) \in E_i\times[t_k,t_{k+1}),\quad \eta_{\Delta_n}^{x,t}\coloneqq \eta^n_{i,N_{T}-1} \quad \text{if }t=T,
\end{aligned}
\end{equation*}
 for any $(x,t)\in Q_T$.

\textit{Step 1: Existence of $w^*$ and proof of \eqref{eq:tesi1}.} By \cref{properties decrete value function}, $||v_{\Delta_n}||_{L^\infty(Q_T)}\leq  \widetilde C_{\text{{\rm v}}}$ for any $n\in\NN$. According to the Riesz Representation theorem, there exists $v^*\in L^\infty(Q_T)$ such that, u.t.s., 
 $v_{\Delta_n}\rightharpoonup^* v^*$,  
\begin{equation*}
 {\text {i.e.}}\quad  \int_{Q_T}v_{\Delta_n}(x,t) \theta(x,t)\dd x \dd t\to \int_{Q_T}v^*(x,t) \theta(x,t)\dd x\dd t \qquad \text{for any } \theta \in L^1(Q_T).
\end{equation*}
For any $t\in [0,T]$, by \cref{SLCEproperties}, $m_{\Delta_n}(t)$ are uniform tight measures. Then, by Prokhorov's theorem (see \cite[Theorem 5.1.3]{AmbrosioGigliSavar}), the sequence $m_{\Delta_n}(t)$ is relative compact in $\P_1(\RR^d)$. By \cref{SLCEproperties}, $m_{\Delta_n}(t)$ have uniformly integrable first moments. Then, by   \cite[Proposition 7.1.5]{AmbrosioGigliSavar}, there exists $m^*(t)\in \P_1(\RR^d)$ such that, u.t.s., $m_{\Delta_n}(t) \to m^*(t)$ in $\P_1(\RR^d)$.
 In addition, since $||m_{\Delta_n}(\cdot,t)||_{L^1(\RR^d)}=1$ for any $t\in [0,T]$, $m_{\Delta_n}(\cdot,t)\rightharpoonup m^*(\cdot,t)$ in $L^1(\RR^d)$. We observe that $m^*(x,t)=0$  for a.e.   $x \notin\ov B(0,C)$, and we call $w^*\coloneqq(m^*,v^*)$. This also proves \eqref{eq:tesi1}.
 
\textit{Step 2: Definition of  $\mathcal{E}_1(\Delta_n)$ and proof that $\mathcal{E}_1(\Delta_n)\to 0$.} We start by noticing that $\langle  A_{\Delta _n}[ \varphi^n], \varphi^n-w^n \rangle_{\Delta_n} = \langle  A_{\Delta _n}[ \varphi^n]-A_{\Delta _n}[ w^n], \varphi^n-w^n \rangle_{\Delta_n}$, since $A_{\Delta_n}[w^n]=0$. Applying \eqref{eq: inequality Adelta discrmon} combined with  the initial and terminal conditions in \eqref{eq:MFGsdiscr1},
we obtain 
\begin{equation*}
    \langle  A_{\Delta _n}[ \varphi^n], \varphi^n-w^n \rangle_{\Delta_n}\geq   \sum_{i\in \Ixn}(g(x_i,m^n_{N_T})- G(x_i,\phi(T))(m^n_{i,N_T}-\phi^n_{i,N_T}). 
\end{equation*}
By adding and subtracting  the terms $g(i,\phi^n_{N_T})(m^n_{i,N_T}-\phi^n_{i,N_T})$ for any $i\in \Ixn$ and using the discrete monotonicity \cref{DiscreteMonotonicity} of $g$, we further 
 obtain 
\begin{equation*}
    \langle  A_{\Delta _n}[ \varphi^n], \varphi^n-w^n \rangle_{\Delta_n}\geq   \sum_{i\in \Ixn}(g(x_i,\phi^n_{N_T})-  G(x_i,\phi (T)))(m^n_{i,N_T}-\phi^n_{i,N_T}). 
\end{equation*}
 Therefore, we set $\mathcal{E}_1(\Delta_n)\coloneqq \sum_{i\in \Ixn}(g(x_i,\phi^n_{N_T})-  G(x_i,\phi (T)))(m^n_{i,N_T}-\phi^n_{i,N_T})$.  We estimate $\mathcal{E}_1(\Delta_n)$ from above, noticing that $g(i,\phi^n_{N_T})=\frac{1}{\Delta x_n^d}\int_{E_i}G(x,\phi_{\Delta_n}(T)\dd x)$, 
\begin{equation*}
\begin{aligned}  
&|\mathcal{E}_1(\Delta x_n,\Delta t_n)|=\Big |\sum_{i\in \Jxn}\Big ( \frac{1}{\Delta x_n^d}\int_{E_i}G(x,\phi_{\Delta_n}(T))\dd x-  G(x_i,\phi (T))\Big)(m^n_{i,N_T}-\phi^n_{i,N_T})\Big|\\
& \leq \Big |\sum_{i\in \Jxn} (m^n_{i,N_T}-\phi^n_{i,N_T})\frac{1}{\Delta x_n^d}\int_{E_i}\big (G(x,\phi_{\Delta_n}(T))-G(x_i,\phi_{\Delta_n}(T))\big)\dd x \Big|
\\ & +|\sum_{i\in \Jxn}(m^n_{i,N_T}-\phi^n_{i,N_T}) \big (G(x_i,\phi_{\Delta_n}(T))-G(x_i,\phi(T))\big )|\\ 
 &\leq \sqrt{d}C_{G,2}\Delta x_n+2 \sup_{x\in\ov B(0,R_1)}|G(x,\phi_{\Delta_n}(T))- G(x,\phi (T))|.\\ 
\end{aligned}
\end{equation*}
We consider the sequence of functions $G(\cdot,\phi_{\Delta_n}(T)):\ov B(0,R_1)\to \RR$. By \eqref{eq:step5a} and \ref{continuity}, this sequence converges pointwise to  $G(\cdot,\phi(T))$. Moreover, by \ref{C2bounded},  these functions  are equi-bounded and equi-Lipschitz. By Ascoli-Arzelà theorem, there exists a subsequence, still denoted by $G(\cdot,\phi_{\Delta_n}(T))$, that converges uniformly to $G(\cdot,\phi(T))$. 
Since this limit is unique, we  conclude that  $\lim_{n\to \infty}\sup_{x\in\ov B(0,R_1)}|G(x,\phi_{\Delta_n}(T))- G(x,\phi (T)|= 0$, which implies that $\mathcal{E}_1(\Delta_n)\to 0$ as $n\to \infty$.

\textit{Step 3: Definition of  $\mathcal{E}_2(\Delta_n)$ and proof that $\mathcal{E}_2(\Delta_n)\to 0$.} We define 
\begin{equation*}
 \begin{aligned}
          &\mathcal{E}_2(\Delta_n)\coloneqq\int_{Q_T}\big(\partial_t\psi(x,t)-H(x,D_x\psi(x,t))+F(x,\phi(t)\big)\big(m_{\Delta_n}(x,t)-\phi(x,t)\big)\dd x\dd t \\ 
          &+\sum_{k\in \Itns} \sum_{i\in\Ixn} \Big(
		  \psi^n_{i,k}   - I_{\Delta x_n}[ \psi^n_{k+1}](x_i - \Delta t_n \xi^n_{i,k}) - \Delta t_n L(x_i,\xi^n_{i,k})  -\Delta t_n  f(x_i,\phi^n_k) \Big) (m^n_{i,k} - \phi^n_{i,k}). 
 \end{aligned}
 \end{equation*}
By defining the grid function $\zeta^n \coloneqq m^n-\phi^n$ and the functions 
$h\coloneqq\partial_t\psi-H(x,D_x\psi)+F(x,\phi(t))$, $\zeta_{\Delta_n}=m_{\Delta_n}-\phi_{\Delta_n}$,  we can rewrite $\mathcal{E}_2(\Delta_n)$ as the sum of the following four terms.
The first term is
\begin{equation}\label{err2: term1}
    \Delta t_n \sum_{k\in \Itns} \sum_{i\in\Ixn} \Big(
		\frac{1}{\Delta x_n^d}  \int_{E_i}F(x,\phi(t_k))\dd x - f(x_i,\phi^n_k) \Big) \zeta^n_{i,k}
\end{equation}
and tends to zero, arguing as in {\em Step 2}. The second term is\begin{equation}\label{err2: term2}
\begin{aligned}
          &\Delta t_n\sum_{k\in \Itns} \sum_{i\in\Ixn} \frac{
		  \psi^n_{i,k}   - I_{\Delta x_n}[ \psi^n_{k+1}](x_i - \Delta t_n \xi^n_{i,k}) - \Delta t_n L(x_i,\xi^n_{i,k})  -\frac{\Delta t_n}{\Delta x_n^d}  \int_{E_i}F(x,\phi(t_k))\dd x} {\Delta t_n}\zeta^n_{i,k} \\ 
  &-\Delta t_n \sum_{k\in \Itns} \sum_{i\in\Ixn} \big(\partial_t\psi(x_i,t_k)-H(x_i,D_x\psi(x_i,t_k))+F(x_i,\phi(t_k))\big)\zeta^n_{i,k},
		\\
\end{aligned}
\end{equation}
and goes to zero by the consistency property in \cref{properties decrete value function}. The third term is 

\begin{equation}\label{err2: term3}
    \begin{aligned}
    &\int_{Q_T}h(x,t)\zeta_{\Delta_n}(x,t)\dd x\dd t-\Delta t_n \Delta x_n^d\sum_{(i,k)\in \Ixtn^*} h(x_i,t_k)\zeta_{\Delta_n}(x_i,t_k),
    \end{aligned}
\end{equation}
and goes to zero because,  using that $\supp( \zeta_{\Delta_n}(\cdot,t)) \subseteq \ov B(0,R_1)$ for any $t \in [0,T]$, we can write 
\begin{equation*}
\begin{aligned}
   & \int_{Q_T}h(x,t)\zeta_{\Delta_n}(x,t)\dd x\dd t=\int_{\ov B(0,R_1)}\sum_{k\in \Itns}\int_{t_k}^{t_{k+1}}h(x,t)\zeta_{\Delta_n}(x,t_k)\dd t\dd x \\ &   =  \Delta t_n \sum_{k\in\Itns} \int_{\ov B(0,R_1)} h(x,t_k)\zeta_{\Delta_n}(x,t_k)\dd t\dd x+\mathcal{O}(\Delta t_n)\\
    & =  \Delta x_n^d \Delta t_n \sum_{k\in\Itns}  \sum_{i\in\Ixn}  h(x_i,t_k)\zeta_{\Delta_n}(x_i,t_k)+\mathcal{O}(\Delta t_n).\\
\end{aligned}
\end{equation*}
Finally, the fourth term is
\begin{equation}\label{err2: term4}
    \int_{Q_T}h(x,t)\left(\phi(x,t)-\phi_{\Delta_n}(x,t)\right)\dd x\dd t,
\end{equation}
and goes to zero by \eqref{eq:step5a}. 

\textit{Step 4: Convergence of the optimal controls.}  Given $(x,t)\in Q_T$, we consider a sequence of grid points $(x_{i_n},t_{k_n})\in \Ixtn^*$ such that
$(x_{i_n},t_{k_n})\to(x,t)$.  For any $\xi^n_{i_n,k_n}\in \Lambda_{i_n,k_n}[\psi^n]$, we have  $\xi^n_{i_n,k_n}\to D_pH(x,D_x\psi(x,t))$ as $n\to \infty$. Indeed, by defining 
$\omega^n,\omega: \ov B_{\infty}(0, C_H)\to \RR$ as
\begin{equation*}
    \omega^n(a)\coloneqq \frac{I_{\Delta x_n}[\psi^n_{k_n+1}](x_{i_n}-a_n\Delta t_n )-\psi(x_{i_n},t_{k_n})}{\Delta t_n}+ L(x_{i_n},a)
\end{equation*} 
and $ \omega(a)\coloneqq -aD_x\psi (x,t)+L(x,a)$,  it follows that 
$\omega^n,\omega\in \CC(\ov B_{\infty}(0,C_H))$ and $\omega^n$ converges uniformly to $\omega$.  
According to \cref{Compact Lemma 2.4 CapuzzoDolcetta}, we conclude that   $\xi^n_{i_n,k_n}\to D_pH(x,D_x\psi(x,t))$ as $n\to \infty$, since $D_pH\big(x,D_x\psi (x,t)\big)=\amin_{a\in \ov{B}_{\infty}(0,C_H)} \omega(a)$. 

\textit{Step 5: Convergence of the relaxed controls.} For any  $(x,t)\in Q_T$, we have that  $\eta_{\Delta_n}^{x,t}\to \delta_{D_pH(x,D_x\phi(x,t))}$ in $\P_1(\RR^d)$ as $n\to \infty$. Indeed, since by definition $\supp(\eta_{\Delta_n}^{x,t})\subseteq \ov B_{\infty}(0,C_H)$, $\eta_{\Delta_n}^{x,t}$ is a sequence of uniform tight measures with uniformly integrable first moments. Then, by Prokhorov's theorem (see \cite[Theorem 5.1.3]{AmbrosioGigliSavar}), $\eta_{\Delta_n}^{x,t}$ is relative compact in $\P_1(\RR^d)$ and by \cite[Proposition 7.1.5]{AmbrosioGigliSavar}, we conclude that there exists $\eta^{x,t}\in \P_1(\RR^d)$ such that, u.t.s., $\eta_{\Delta_n}^{x,t}\to \eta^{x,t}$. It remains to show that $\eta^{x,t}=\delta_{D_pH(x,D_x\psi(x,t))}$, i.e. $\supp(\eta^{x,t})=\{D_pH(x,D_x\psi(x,t))\}$. 
By definition, for any $n\in \NN$,  there exists a sequence of grid points $(i_n,k_n)\in \Ixtn^*$ such that $(x,t)\in E_{i_n}\times [t_{k_n},t_{k_n+1})$. Then, $\eta_{\Delta_n}^{x,t}=\eta^n_{i_n,k_n}$ and  $\supp(\eta_{\Delta_n}^{x,t})\subseteq\Lambda_{i_n,k_n}[\psi^n]$. By \cite[Proposition 5.1.3]{AmbrosioGigliSavar}, for any $a\in \supp(\eta^{x,t})$, there exists a sequence $a^n\in \supp(\eta_{\Delta_n}^{x,t})$ such that $a^n\to a$. By \emph{Step 4},  $a=D_pH(x,D_x\phi(x,t)).$ Finally, since the limit is unique, then the whole sequence converges to $\delta_{D_pH(x,D_x\psi(x,t))}$. 

\textit{Step 6:  Definition of  $\mathcal{E}_3(\Delta_n)$ and proof that $\mathcal{E}_3(\Delta_n)\to 0$.}  We define
 \begin{equation*}
 \begin{aligned}
          \mathcal{E}_3(\Delta_n):=  &\int_{Q_T}\Big(\partial_t\phi(x,t)-\diver\big(\phi(x,t)D_pH\big(x,D_x\psi(x,t)\big)\big)\Big)\big(v_{\Delta_n}(x,t)-\psi(x,t)\big)\dd x\dd t\\
          &+\sum_{k\in \Itns} \sum_{i\in\Ixn} \Big(- \phi^n_{i,k+1}  + \sum_{j\in\Ixn}  \phi^n_{j,k} \int_{\RR^d}\beta_i(x_j-\Delta t_n a )\dd[\eta^n_{j,k}](a) \Big) (v^n_{i,k+1} - \psi^n_{i,k+1} ).       
 \end{aligned}
 \end{equation*}
We define  the grid function   $l^n:= v^n-\psi^n$ and the function $l_{\Delta_n}\coloneqq v_{\Delta_n}-\psi$. We rewrite $\mathcal{E}_3(\Delta_n)$ as the sum of the following three terms.
The first term of $\mathcal{E}_3(\Delta_n)$ is
\begin{equation}\label{err3: term1}
    \begin{aligned}
       &  \Delta t_n \sum_{k\in \Itns} \sum_{i\in\Ixn} \left(- \frac{\phi^n_{i,k+1}-\phi^n_{i,k}}{\Delta t_n} \right) l_{\Delta_n}(x_i,t_{k+1}) +\int_{Q_T}\partial_t\phi(x,t)l_{\Delta_n}(x,t)\dd x\dd t\\
    \end{aligned}
\end{equation}
and it goes to zero. Indeed, 
 the regularity of $\psi$ and $\phi$, the definition of $R_1$, the Lipschitz property of $l_{\Delta_n}(\cdot, t)$ for any $t \in [0,T]$ (see \cref{properties decrete value function}),  and  \eqref{eq:step5b},  imply that the second term in  \eqref{err3: term1} is such that
\begin{equation*}
    \begin{aligned}
        &\sum_{k\in \Itns} \int_{\ov B(0,R_1)}\int_{t_k}^{t_{k+1}}\partial_t\phi(x,t)\big(v_{\Delta _n}(x,t_{k+1})-\psi(x,t)\big)\dd x\dd t \\
                & =\Delta t_n\sum_{k\in \Itns} \int_{\ov B(0,R_1)}\partial_t\phi(x,t_{k+1})l_{\Delta_n}(x,t_{k+1})\dd x+\mathcal{O}(\Delta t_n)\\
                   & =\Delta t_n\sum_{k\in \Itns} \int_{\ov B(0,R_1)}  \frac{\phi(x,t_{k+1})-\phi(x,t_{k})}{\Delta t_n}l_{\Delta_n}(x,t_{k+1})\dd x +\mathcal{O}(\Delta t_n)\\
       &=\Delta x_n^d \Delta t_n \sum_{k\in \Itns} \sum_{i\in\Jxn}\frac{\phi(x_i,t_{k+1})-\phi(x_i,t_{k})}{\Delta t_n}  l_{\Delta_n}(x_i,t_{k+1})  +\mathcal{O}\left(\Delta t_n+\frac{\Delta x_n}{\Delta t_n}\right)\\
          & = \Delta t_n \sum_{k\in \Itns} \sum_{i\in\Jxn} \frac{\phi^n_{i,k+1}-\phi^n_{i,k}}{\Delta t_n}  l_{\Delta_n}(x_i,t_{k+1}) +\mathcal{O}\left(\Delta t_n+\frac{\Delta x_n}{\Delta t_n}\right).
 \end{aligned}
 \end{equation*}
Where the term $\frac{\Delta x_n}{\Delta t_n}$  appears because we are approximating the integral in space of the function $\frac{\phi(x,t_{k+1})-\phi(x,t_{k})}{\Delta t_n}l_{\Delta_n}(x,t_{k+1})$ that is only Lipschitz continuous in space with Lipschitz constant of order $\frac{1}{(\Delta t_n)}$.
The second term of $\mathcal{E}_3(\Delta_n)$ is
\begin{equation}\label{err3: term2}
\begin{aligned}
&\sum_{k\in \Itns} \sum_{j\in\Ixn}\phi^n_{j,k}\Big(\sum_{i\in\Ixn} \beta_i(x_j-\Delta t_nD_pH(x_j,D_x \psi(x_j,t_{k+1})) )  l^n_{i,k+1}  -l^n_{j,k+1} \Big)\\ 
&-\int_{Q_T}\diver\big(\phi(x,t)D_pH(x,D_x \psi(x,t))\big)l_{\Delta_n}(x,t)\dd x\dd t,      
\end{aligned}
\end{equation}
and it goes to zero. Indeed, the definition of $R_1$, the regularity of $\phi$ and $\psi$, estimates \eqref{interperr} and \eqref{eq:step5b},
and the Lipschitz property of $l_{\Delta_n}(\cdot, t)$ for any $t \in [0,T]$, imply
that the first term in \eqref{err3: term2} is such that
\begin{equation*}\label{err3: term2 sum scrittura1}
    \begin{aligned}
      &\sum_{k\in \Itns} \sum_{j\in\Jxn}\phi^n_{j,k}\Big(\sum_{i\in\Jxn} \beta_i(x_j-\Delta t_nD_pH(x_j,D_x \psi(x_j,t_{k+1})) )  l^n_{i,k+1}  -l^n_{j,k+1} \Big)\\ 
     &=  \sum_{k\in \Itns} \sum_{j\in\Jxn}\phi^n_{j,k}\Big(I_{\Delta x_n}[l^n_{k+1}](x_j-\Delta t_n D_pH(x_j,D_x \psi(x_j,t_{k+1})))-l^n_{j,k+1}\Big) \\ 
    & = \sum_{k\in \Itns} \sum_{j\in\Jxn} \phi^n_{j,k}\Big( l_{\Delta_n}(x_j-\Delta t_n D_pH(x_j,D_x \psi(x_j,t_{k+1})),t_{k+1}) -l^n_{j,k+1}  \Big)  +\mathcal{O}\left(\frac{\Delta x_n^2}{\Delta t_n}\right) \\
    &        
      =\Delta x_n^d \sum_{\substack{k\in \Itns \\j\in\Jxn
      } }\phi(x_j,t_k)\Big( l_{\Delta_n}(x_j-\Delta t_n D_pH(x_j,D_x \psi(x_j,t_{k+1})) ,t_{k+1}) -l^n_{j,k+1} \Big) +\mathcal{O}\left(\frac{\Delta x_n^2}{\Delta t_n}\right) \\ 
    &= \int_{\ov B(0,R_1)\times [0,T]}\phi(x,t)  \frac{l_{\Delta_n}(x-\Delta t_nD_pH(x,D_x \psi (x,t)) ,t)-l_{\Delta_n}(x,t)}{\Delta t_n} \dd x\dd t  +\mathcal{O}\left(\frac{\Delta x_n}{\Delta t_n}+\Delta t_n\right).
    \end{aligned}
\end{equation*}
Moreover observing that,  again by the Lipschitz property of $l_{\Delta_n}(\cdot, t)$, 
\begin{equation*}
    \begin{aligned}
         & \int_{\ov B(0,R_1)\times [0,T]}\phi(x,t)  \frac{l_{\Delta_n}(x-\Delta t_nD_pH(x,D_x \psi (x,t)) ,t)-l_{\Delta_n}(x,t)}{\Delta t_n} \dd x\dd t  \\ 
         &=\int_{\ov B(0,R_1)\times[0,T]}\phi(x,t)D_pH(x,D_x \psi(x,t))\cdot D_x l_{\Delta_n}(x,t)\dd x\dd t +o(1),
    \end{aligned}
\end{equation*}
using integration by part and recalling the definition  of $R_1$, we conclude that \eqref{err3: term2} goes to 0.
The third term is
\begin{equation}\label{err3: term3}
\begin{aligned}
  &\sum_{k\in \Itns} \sum_{i\in\Ixn} \sum_{j\in\Ixn}  \phi^n_{j,k}   l_{\Delta_n}(x_i,t_{k+1})\int_{\RR^d}\beta_i(x_j-\Delta t_n a )\dd [\eta^n_{j,k}](a) \\
        &-\sum_{k\in \Itns} \sum_{i\in\Ixn} \sum_{j\in\Ixn} \beta_i(x_j-\Delta t_nD_pH(x_j,D_x \psi (x_j,t_{k+1})) ) \phi^n_{j,k}  l_{\Delta_n}(x_i,t_{k+1}) 
\end{aligned}
\end{equation}
and it goes to zero.
Indeed, by
 the definition of $R_1$, by the regularity of $\psi$ and$\phi$, by the estimate \eqref{interperr}, by the Lipschitz property of $l_{\Delta_n}(\cdot, t)$ (we denote  $\tilde C$ its Lipschitz constant),  and  by \eqref{eq:step5b}  
 we can approximate \eqref{err3: term3} as following,
\begin{equation*}\label{err3: term3 in last step}
    \begin{aligned}
         & \sum_{k\in \Itns} \sum_{j\in\Jxn}\phi^n_{j,k} \int_{\RR^d}I_{\Delta x_n}[ l^n_{k+1}](x_j-\Delta t_n a)\dd[\eta^n_{j,k}](a)\\ 
        & - \sum_{k\in \Itns} \sum_{j\in\Jxn}\phi^n_{j,k} I_{\Delta x_n}[l^n_{k+1}](x_j-\Delta t_nD_pH(x_j,D_x \psi(x_j,t_{k+1})))\\
   &= \sum_{\substack{k\in \Itns \\j\in\Jxn
      } }\phi^n_{j,k} \int_{\RR^d}l_{\Delta_n}(x_j-\Delta t_n a,t_{k+1})-l_{\Delta_n}(x_j-\Delta t_nD_pH(x_j,D_x \psi(x_j,t_{k+1})),t_{k+1})\dd[\eta^n_{j,k}](a)\\
      &+ \mathcal{O}\left(\frac{\Delta x_n^2}{\Delta t_n}\right),
    \end{aligned}
 \end{equation*}  
 where the term
 \begin{equation*}
       \begin{aligned}  
        &\Big|\sum_{\substack{k\in \Itns \\j\in\Jxn
      } }\phi^n_{j,k} \int_{\RR^d} l_{\Delta_n}(x_j-\Delta t_n a,t_{k+1})-l_{\Delta_n}(x_j-\Delta t_nD_pH(x_j,D_x \psi(x_j,t_{k+1})),t_{k+1})\dd[\eta^n_{j,k}](a)\Big|\\
        &\leq \tilde C \Delta t_n \sum_{k\in \Itns} \sum_{j\in\Jxn}\phi^n_{j,k} \int_{\RR^d}|a-D_pH(x_j,D_x\psi (x_j,t_{k+1}))|\dd[\eta^n_{j,k}](a)\\
        &=\tilde C\Delta x_n^d \Delta t_n \sum_{k\in \Itns} \sum_{j\in\Jxn}\phi(x_j,t_k) \int_{\RR^d}|a-D_pH(x_j,D_x\psi (x_j,t_{k+1}))|\dd[\eta^n_{j,k}](a)+\mathcal{O}\left(\Delta x^2_n\right) \\
          &=\tilde C \int_{0}^{T}\int_{\ov B(0,R_1)}\phi(x,t) \int_{\RR^d}|a-D_pH(x,D_x\psi (x,t))|\dd[\eta_{\Delta_n}^{x,t}](a)\dd x \dd t+\mathcal{O}\left(\Delta x_n+\Delta t_n\right)
    \end{aligned}
\end{equation*}
goes to zero by the Dominated convergence theorem and {\textit{Step 5}}.
In conclusion, $\mathcal{E}_3({\Delta_n})\to 0$, since \eqref{err3: term1}, \eqref{err3: term2} and \eqref{err3: term3} go to zero.

\end{proof}
\begin{rem}
Let us note that an inverse CFL condition $\frac{\Delta x}{\Delta t}\to 0$ is required to ensure convergence,  as is typical in SL scheme,  allowing for large time steps.
A similar condition is required in
\cref{properties decrete value function} to guarantee the consistency of the scheme for the value function, and  in \cref{SLCEproperties}   to ensure the compact support of the discrete distribution.
\end{rem}

\begin{rem}
We are not able to prove that the pair $(m^*,v^*)$ is a distributional-viscosity solution to the MFGs system as in \cite{CarliniSilva2014}, since proving that  $m^*$ is a distributional solution to the continuity equation relies on specific properties of the regularized gradient. These properties do not generally hold in our setting, since the drift is determined by discrete optimal controls.
However,  arguing as in \cite[Proposition 3.9]{CarliniSilva2014}, the boundedness of the discrete controls implies uniform equi-continuity in time of $m_{\Delta_n}$, which,   combined  with   Lemma \ref{SLCEproperties}, allows us to prove that $m_{\Delta_n} \to   m^* $ in $ \CC([0,T];\P_1(\RR^d))$.  Moreover, arguing as in \cite[Theorem 3.3]{CarliniSilva2014}, we  have $v_{\Delta_n}\to v^*$ uniformly over compact sets of $\RR^d\times [0,T]$, and  $v^*$ is the viscosity solution to the HJB equation given the measure $m^*$.
\end{rem}

 \cref{rem: discrete monotonicity} and \cref{convergenceschemeMFGsD1} prove the following  result. 
\begin{cor}\label{existencewealsolution}
Let us assume  \onlyref{continuity,C2bounded,initialdensity,ass:lagrangian} and \ref{monoticityassumption}. Then there exists a weak solution to   the MFGs system.
\end{cor}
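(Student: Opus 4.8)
The plan is to realise the weak solution as a subsequential limit of solutions of the fully discrete scheme \eqref{eq:MFGsdiscr1}, combining the three preceding results. First I would fix an admissible mesh sequence. Since the time step must satisfy $N_T\Delta t_n=T$ for some integer $N_T$, I take $\Delta t_n=T/n$ and $\Delta x_n=(\Delta t_n)^2=T^2/n^2$. Then $\Delta x_n\to 0$, $\Delta t_n\to 0$, and $\Delta x_n/\Delta t_n=T/n\to 0$, so the inverse-CFL hypothesis of \cref{convergenceschemeMFGsD1} is met; moreover, for every fixed $c>0$ one has $\Delta x_n<c\,\Delta t_n$ for $n$ large, so \ref{condition dxdt for compact support discrete solution} holds along the sequence. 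Any other choice with $\Delta x_n=o(\Delta t_n)$ would serve equally well.

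Next I would verify the discrete monotonicity \ref{DiscreteMonotonicity} along this sequence. This is precisely the content of \cref{rem: discrete monotonicity}: under \ref{C2bounded}, and since we assume \ref{monoticityassumption} with strict inequality for distinct measures, \ref{DiscreteMonotonicity} holds for all $\Delta x>0$ small enough, hence for all $n$ beyond some index $n_0$. After discarding the first $n_0$ terms and relabelling, \emph{both} \ref{DiscreteMonotonicity} and \ref{condition dxdt for compact support discrete solution} hold for every member of the sequence $\Delta_n=(\Delta x_n,\Delta t_n)$.

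With this in hand, \cref{existenceMFGsd1} applies for each $n$ (its hypotheses \ref{continuity}, \ref{initialdensity}, \ref{ass:lagrangian}, together with \ref{condition dxdt for compact support discrete solution}, are all in force), yielding a solution $(m^n,v^n)\in\S(\GG_{\Delta_n})\times\BB(\GG_{\Delta_n})$ of \eqref{eq:MFGsdiscr1}. Setting $m_{\Delta_n}=M_{\Delta_n}[m^n]$ and $v_{\Delta_n}=V_{\Delta_n}[v^n]$, the sequence $\{(m_{\Delta_n},v_{\Delta_n})\}_n$ satisfies all the hypotheses of \cref{convergenceschemeMFGsD1}; invoking that theorem produces, up to a subsequence, a limit $(m^*,v^*)$ that is a weak solution to the MFGs system in the sense of \cref{weak solution}, which is the assertion of the corollary.

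There is no genuine obstacle: the corollary is essentially a repackaging of \cref{rem: discrete monotonicity}, \cref{existenceMFGsd1} and \cref{convergenceschemeMFGsD1}. The only point deserving care is the \emph{compatibility} of the mesh constraints — one must exhibit a single sequence $\Delta_n$ that simultaneously satisfies the inverse-CFL condition $\Delta x_n/\Delta t_n\to 0$, the bound $\Delta x_n<c\,\Delta t_n$, the identity $N_T\Delta t_n=T$, and the smallness of $\Delta x_n$ demanded by \cref{rem: discrete monotonicity} — and the choice $\Delta t_n=T/n$, $\Delta x_n=(T/n)^2$ does exactly this.
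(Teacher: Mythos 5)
Your argument is correct and follows essentially the same route as the paper, which obtains the corollary directly by combining \cref{rem: discrete monotonicity} (to get \ref{DiscreteMonotonicity} for small $\Delta x$ from strict Lasry--Lions monotonicity) with \cref{existenceMFGsd1} and \cref{convergenceschemeMFGsD1}. Your only addition is the explicit admissible mesh choice $\Delta t_n=T/n$, $\Delta x_n=(T/n)^2$, which the paper leaves implicit but which is a sound way to make the compatibility of the mesh constraints concrete.
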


\subsection{A semi-Lagrangian scheme \eqref{eq:MFGsdiscr2}  for MFGs}\label{sec: A semi-Lagrange scheme 2 for MFGs}
In the reminder of the article, we suppose verified the following assumption:
\begin{Hassum}[resume]
  \item \label{uniquecontrol}
For every $v\in \BB(\GG_{\Delta x})$, $(i,k)\in \Ixt^*$, the set of {discrete optimal controls} $\Lambda_{i,k}[v]$
is a singleton.
\end{Hassum}

Although the discrete optimal controls are not necessarily unique in general, the assumption \ref{uniquecontrol} of uniqueness of the minimizer is reasonable, as it is frequently observed in practice. 
   The authors of \cite{AshrafyanGomes} note that the minimizer is typically unique, except in particular cases such as those involving symmetry. This empirical observation suggests that, in most cases, the function to be minimized (the discrete cost) has a single minimum point.

Moreover, the uniqueness assumption is a key ingredient in proving the convergence of the  Learning Value Iteration algorithm, which will be introduced and analyzed later in the paper. In addition, it simplifies the implementation: in this case, the relaxed control reduces to a Dirac measure supported on the unique optimal control. Without uniqueness, one would need to compute and select among multiple relaxed controls, which would introduce additional computational cost and ambiguity. Therefore, assuming uniqueness 
can be seen as a practical simplification that enhances computational efficiency. 

We consider the following simplified SL scheme to the MFGs system: find {$(m,v) \in \S(\GG_{\Delta})\times \BB(\GG_{\Delta})$ such that 
\begin{equation}\label{eq:MFGsdiscr2}\tag{$MFG_d^2$}
            \begin{aligned}
               &v_{i,k}=I_{\Delta x}[v_{k+1}](x_{i}-\Delta t q_{i,k})+\Delta tL(x_i,q_{i,k})  +\Delta t f(x_i,m_k)&&\text{for any   }(i,k)\in \Ixt^*, \\
                 &m_{i,k+1}=\sum_{j\in \Ix}\beta_{i}(x_{j}-\Delta tq_{j,k})m_{j,k}  &&\text{for any   }(i,k)\in \Ixt^*,\\
               &v_{i,N_T}=g (i, m_{N_T}), \qquad m_{i,0}=\int_{E_{i}}\dd [m^*_{0}](x)&&  \text{for any  }i\in \Ix \\  
            \end{aligned}
\end{equation}
 and $q \in \BB(\GG^*_{\Delta};\RR ^d)$ such that
\begin{equation}\label{eq:qMFGs}
        q_{i,k}=\amin_{a\in \ov{B}_{\infty}(0,C_H)}\left\{I_{\Delta x}[v_{k+1}](x_{i}-\Delta t a)+\Delta tL(x_i,a)\right\}\qquad \text{for any   }(i,k)\in \Ixt^*.
\end{equation}
\begin{rem}\label{rem equivalent schemes}
     We notice that, under \ref{uniquecontrol},  the schemes \eqref{eq:MFGsdiscr1} and \eqref{eq:MFGsdiscr2} are equivalent. In fact, \ref{uniquecontrol} implies that there exists a unique $q \in \Lambda_{\Delta}[v]$. Consequently, if $\mu \in \M_{\Delta}[v]$, then $\mu_{i,k}=\delta_{q_{i,k}}$ for any $(i,k)\in \Ixt^*.$ 
\end{rem}
Using the monotonicity property \cref{Adelta properties}, we can prove a uniqueness result as follows:\begin{prop}\label{uniquenessMFGsd2}
    Let us assume \ref{initialdensity} 
    and let $\Delta=(\Delta x,\Delta t)>0$ be such that \onlyref{uniquecontrol,DiscreteMonotonicity,condition dxdt for compact support discrete solution} hold. If $(m,v)$ and $(\tilde m,\tilde v)$  are two solutions to \eqref{eq:MFGsdiscr2}, then $(m,v)=(\tilde m,\tilde v)$.
\end{prop}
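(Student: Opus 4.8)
The plan is to exploit the monotonicity inequality \eqref{eq: inequality Adelta discrmon} from \cref{Adelta properties}. Let $w=(m,v)$ and $\tilde w=(\tilde m,\tilde v)$ be two solutions of \eqref{eq:MFGsdiscr2}. First I would observe that, under \ref{uniquecontrol} and \cref{rem equivalent schemes}, these are also solutions of \eqref{eq:MFGsdiscr1}, with the associated relaxed controls being Dirac masses $\mu_{i,k}=\delta_{q_{i,k}}$ and $\tilde\mu_{i,k}=\delta_{\tilde q_{i,k}}$, where $q\in\Lambda_\Delta[v]$ and $\tilde q\in\Lambda_\Delta[\tilde v]$ are the (unique) discrete optimal controls. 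Since $w,\tilde w$ solve the scheme, $A_\Delta[w]=A_\Delta[\tilde w]=0$, so the left-hand side of \eqref{eq: inequality Adelta discrmon} vanishes. By \ref{initialdensity} and \cref{SLCEproperties} (using \ref{condition dxdt for compact support discrete solution}), both $m_k$ and $\tilde m_k$ have compact support, so \cref{Adelta properties} applies. Moreover $m_{i,0}=\tilde m_{i,0}=\int_{E_i}\dd[m_0^*](x)$, so the first bracket in the right-hand side of \eqref{eq: inequality Adelta discrmon} reduces to the terminal term $\sum_{i\in\Ix}(v_{i,N_T}-\tilde v_{i,N_T})(m_{i,N_T}-\tilde m_{i,N_T})$; but $v_{i,N_T}=g(x_i,m_{N_T})$ and $\tilde v_{i,N_T}=g(x_i,\tilde m_{N_T})$, so by the discrete monotonicity \ref{DiscreteMonotonicity} of $g$ this terminal term is $\leq 0$. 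Combining, $0=\langle A_\Delta[\tilde w]-A_\Delta[w],\tilde w-w\rangle_\Delta\geq -\,(\text{something}\leq 0)\geq 0$ — more precisely, going back to the \emph{equality} \eqref{eq:equality Adelta} and the steps that produce \eqref{eq: inequality Adelta discrmon}, one deduces that \emph{every} nonnegative term that was discarded must vanish: the two control-comparison sums $\sum m_{i,k}\int(I_{\Delta x}[\tilde v_{k+1}](x_i-\Delta t a)+\Delta t L(x_i,a))\dd[\mu_{i,k}-\tilde\mu_{i,k}]$ and its symmetric counterpart are each $\geq 0$ by \eqref{minproperty}, the $f$-monotonicity sum is $\geq 0$ by \ref{DiscreteMonotonicity}, and the terminal $g$-term is $\leq 0$; since their total is $0$ and they have consistent signs after the rearrangement, each vanishes.

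Next I would extract equality of the controls on the support of $m$. From the vanishing of $\sum_{(i,k)}m_{i,k}\int_{\RR^d}\big(I_{\Delta x}[\tilde v_{k+1}](x_i-\Delta t a)+\Delta t L(x_i,a)\big)\dd[\mu_{i,k}-\tilde\mu_{i,k}](a)=0$ together with \eqref{minproperty} (each summand is nonnegative), and symmetrically for the other sum with $v$ in place of $\tilde v$, I get that for every $(i,k)$ with $m_{i,k}>0$ the value $I_{\Delta x}[\tilde v_{k+1}](x_i-\Delta t\,\tilde q_{i,k})+\Delta t L(x_i,\tilde q_{i,k})$ equals $I_{\Delta x}[\tilde v_{k+1}](x_i-\Delta t\, q_{i,k})+\Delta t L(x_i,q_{i,k})$, i.e.\ $q_{i,k}$ is also a minimizer of the discrete cost associated with $\tilde v$; by \ref{uniquecontrol} this forces $q_{i,k}=\tilde q_{i,k}$ whenever $m_{i,k}>0$. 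Since the forward scheme \eqref{eq:MFGsdiscr2} for $m$ only uses $q_{j,k}$ at indices $j$ with $m_{j,k}>0$ (the other terms are multiplied by $m_{j,k}=0$), an induction in $k$ starting from $m_{i,0}=\tilde m_{i,0}$ gives $m_{i,k}=\tilde m_{i,k}$ for all $(i,k)\in\Ixt$; hence $m=\tilde m$.

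Finally, with $m=\tilde m$ in hand, equality of the value functions follows by a standard backward-in-time argument: the terminal data agree, $v_{i,N_T}=g(x_i,m_{N_T})=g(x_i,\tilde m_{N_T})=\tilde v_{i,N_T}$, and then the backward recursion $v_{i,k}=S_\Delta[M_\Delta[m]](v_{k+1},i,k)$, which by \cref{properties decrete value function}(i) is a monotone (in fact $1$-Lipschitz in the $\sup$-norm, since the scheme is an inf of translations plus a running cost depending only on the data) map from $v_{k+1}$ to $v_k$ with the same source term for both $v$ and $\tilde v$, yields $v_k=\tilde v_k$ for all $k$ by downward induction on $k$. Therefore $(m,v)=(\tilde m,\tilde v)$.

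The main obstacle I anticipate is the first step: carefully tracking, in the chain \eqref{eq:equality Adelta}$\to$\eqref{eq: inequality Adelta discrmon}, which individual nonnegative summands were dropped, so as to conclude that each of them — the two control-comparison sums, the $f$-term, and the terminal $g$-term — vanishes separately. This requires re-examining the proof of \cref{Adelta properties} in \cref{Appendix} and checking that the sign conventions line up so that a total of zero forces termwise vanishing; once that bookkeeping is done, the remaining deductions ($q=\tilde q$ on $\{m>0\}$, then $m=\tilde m$ by forward induction, then $v=\tilde v$ by backward induction) are routine.
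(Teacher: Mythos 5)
Your overall strategy is exactly the paper's: plug the two solutions into the monotonicity identity \eqref{eq:equality Adelta} (with $A_\Delta[w]=A_\Delta[\tilde w]=0$), use the common initial datum, the terminal coupling, \ref{DiscreteMonotonicity} and \eqref{minproperty} to force the two control-comparison sums to vanish termwise, deduce $q_{i,k}=\tilde q_{i,k}$ on $\{m_{i,k}>0\}$ from \ref{uniquecontrol}, propagate $m=\tilde m$ forward in $k$, and then get $v=\tilde v$ from the backward recursion. This is the same decomposition and the same sequence of deductions as in the paper's proof.

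The one flaw is the sign you assign to the terminal term, and it matters for the step you yourself flag as the main obstacle. With $v_{i,N_T}=g(x_i,m_{N_T})$ and $\tilde v_{i,N_T}=g(x_i,\tilde m_{N_T})$, the terminal contribution is $\sum_{i}\big(g(x_i,m_{N_T})-g(x_i,\tilde m_{N_T})\big)(m_{i,N_T}-\tilde m_{i,N_T})$, which by \ref{DiscreteMonotonicity} (applied with $\nu=m_{N_T}$, $\tilde\nu=\tilde m_{N_T}$) is $\geq 0$, not $\leq 0$ as you claim. As written, your configuration of signs (three nonnegative groups and one nonpositive group summing to zero, justified by ``consistent signs after the rearrangement'') would not allow you to conclude that each group vanishes. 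With the corrected sign the difficulty disappears: all four discarded groups — the terminal $g$-term, the $f$-monotonicity sum, and the two control-comparison sums weighted by $m_{i,k}\geq 0$ and $\tilde m_{i,k}\geq 0$ respectively — are nonnegative and sum to zero, hence each vanishes; this is precisely the bookkeeping in the paper, which keeps the equality \eqref{eq:equality Adelta} and peels off the two control sums before invoking \ref{uniquecontrol}. Once that sign is fixed, the remainder of your argument (uniqueness of the minimizer on the support of $m$, forward induction for the density, backward induction for the value function) is correct and coincides with the paper's proof.
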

\begin{proof}
     The proof can be obtained similarly to \cite[Lemma 7.2]{AshrafyanGomes}. 
Let $w=(m,v)$ and $\tilde w=(\tilde m,\tilde v)$  be two solutions to \eqref{eq:MFGsdiscr2}, $q\in \Lambda_{\Delta}[v]$ and $\tilde q\in \Lambda_{\Delta}[\tilde v]$ be the respective unique grid functions of optimal controls, and $\mu\in \M_{\Delta}[v]$ and $\tilde \mu\in \M_{\Delta}[\tilde v]$ be the relaxed controls as in \cref{rem equivalent schemes}. Then $A_\Delta [w]=A_\Delta [\tilde w]$. According to \cref{SLCEproperties}, both $m_k$ and $\tilde m_k$ have compact support for any $k\in \It$. Using \eqref{eq:equality Adelta}, the initial and terminal condition in \eqref{eq:MFGsdiscr2}, \ref{DiscreteMonotonicity},  \eqref{minproperty} and the fact that $m_{i,k}, \tilde m_{i,k}\geq 0$ for any $(i,k)\in \Ixt$, we obtain 
       \begin{equation*}
        \begin{aligned}
           &0 = \langle  A_\Delta[\tilde w]-A_\Delta[ w], \tilde w-w \rangle_{\Delta} \\ 
            & = \sum_{i\in \Ix}\big(g(i,m_{N_T})-g(i,\tilde m_{N_T})\big)(m_{i,N_T}-\tilde m_{i,N_T}) \\
        &+\sum_{(i,k)\in\Ixt^*}m_{i,k}\big(I_{\Delta x}[ \tilde v_{i,k+1}](x_i- \Delta t q_{i,k})+\Delta tL(x_i,q_{i,k})-I_{\Delta x}[\tilde v_{i,k+1}](x_i-\Delta t \tilde q_{i,k})-\Delta tL(x_i,\tilde q_{i,k}) \big)\\ 
        &+\sum_{(i,k)\in\Ixt^*}\tilde m_{i,k}\big(I_{\Delta x}[  v_{i,k+1}](x_i-  \Delta t \tilde q_{i,k})+\Delta tL(x_i,\tilde q_{i,k})-I_{\Delta x}[v_{i,k+1}](x_i- \Delta t q_{i,k})-\Delta tL(x_i,q_{i,k})\big)\\
            &+\Delta t\sum_{(i,k)\in\Ixt^*}\big(f(x_i,m_k)-f(x_i,\tilde m_k)\big)(m_{i,k}-\tilde m_{i,k})\\ 
            &\geq  \sum_{(i,k)\in\Ixt^*}m_{i,k}\big(I_{\Delta x}[ \tilde v_{i,k+1}](x_i- \Delta t q_{i,k})+\Delta tL(x_i,q_{i,k})-I_{\Delta x}[\tilde v_{i,k+1}](x_i- \Delta t\tilde q_{i,k})-\Delta tL(x_i,\tilde q_{i,k}) \big)\\ 
        &+\sum_{(i,k)\in\Ixt^*}\tilde m_{i,k}\big(I_{\Delta x}[  v_{i,k+1}](x_i-  \Delta t \tilde q_{i,k})+\Delta tL(x_i,\tilde q_{i,k})-I_{\Delta x}[v_{i,k+1}](x_i- \Delta t q_{i,k})-\Delta tL(x_i,q_{i,k})\big)\\
               &\geq 0,
        \end{aligned}
\end{equation*}
and then
     \begin{equation*}
        \begin{aligned}
            &  \sum_{(i,k)\in\Ixt^*}m_{i,k}\big(I_{\Delta x}[ \tilde v_{i,k+1}](x_i- \Delta t q_{i,k})+\Delta tL(x_i,q_{i,k})-I_{\Delta x}[\tilde v_{i,k+1}](x_i- \Delta t \tilde q_{i,k})-\Delta tL(x_i,\tilde q_{i,k}) \big)\\ 
        &+\sum_{(i,k)\in\Ixt^*}\tilde m_{i,k}\big(I_{\Delta x}[  v_{i,k+1}](x_i-  \Delta t \tilde q_{i,k})+\Delta tL(x_i,\tilde q_{i,k})-I_{\Delta x}[v_{i,k+1}](x_i- \Delta t q_{i,k})-\Delta tL(x_i,q_{i,k})\big)\\
            &= 0.
        \end{aligned}
\end{equation*}
Once again using the non-negativity of $m_{i,k},\tilde m _{i,k}\geq 0$ for any $(i,k)\in \Ixt^*$, together with  \eqref{minproperty}, it follows that, for any $(i,k)\in \Ixt ^*$,
\begin{equation*}
    m_{i,k}\big( I_{\Delta x}[ \tilde v_{i,k+1}](x_i- \Delta t q_{i,k})+\Delta tL(x_i,q_{i,k})-I_{\Delta x}[\tilde v_{i,k+1}](x_i- \Delta t \tilde q_{i,k})-\Delta tL(x_i,\tilde q_{i,k}) \big)= 0 
\end{equation*}
and 
\begin{equation*}
   \tilde  m_{i,k}\big(I_{\Delta x}[  v_{i,k+1}](x_i-  \Delta t  \tilde q_{i,k})+\Delta tL(x_i,\tilde q_{i,k})-I_{\Delta x}[v_{i,k+1}](x_i- \Delta t q_{i,k})-\Delta tL(x_i,q_{i,k})\big) = 0. 
\end{equation*}
For $k=0$, we have $m_{i,0}=\tilde m_{i,0}$ for any $i\in \Ix$. Let $i\in\Ix$ be such that $m_{i,0}=\tilde m_{i,0}>0$. Therefore, we have that
    \begin{equation*}
  I_{\Delta x}[ \tilde v_{i,1}](x_i- \Delta tq_{i,0})+\Delta tL(x_i,q_{i,0})-I_{\Delta x}[\tilde v_{i,1}](x_i-\Delta t \tilde q_{i,0})-\Delta tL(x_i,\tilde q_{i,0})  = 0
\end{equation*}
implies, due to \ref{uniquecontrol}, that $q_{i,0}=\tilde q_{i,0}$ for all such  $i\in \Ix$. By definition of the scheme \eqref{eq:MFGsdiscr2},  this yields  $m_{1}=\tilde m_1$. Iterating over $k \in \It^*$, we conclude that  $m=\tilde m$ , and consequently $v=\tilde v$. 
\end{proof}
From \cref{rem equivalent schemes}, \cref{existenceMFGsd1}, and \cref{uniquenessMFGsd2} the following result holds true:
\begin{cor}\label{existence and uniqueness MFGsd2}
    Let us assume \onlyref{continuity,initialdensity,ass:lagrangian}  and   $(\Delta x,\Delta t)>0$ such that \cref{uniquecontrol,DiscreteMonotonicity,condition dxdt for compact support discrete solution} hold. Then there exists a unique  solution to \eqref{eq:MFGsdiscr2}.  
    \end{cor}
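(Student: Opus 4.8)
The plan is to deduce the statement by chaining together three results already at hand: the existence of a solution to the auxiliary scheme \eqref{eq:MFGsdiscr1}, the equivalence of \eqref{eq:MFGsdiscr1} and \eqref{eq:MFGsdiscr2} under \ref{uniquecontrol}, and the uniqueness of solutions to \eqref{eq:MFGsdiscr2}.

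For the existence part I would first invoke \cref{existenceMFGsd1}, whose hypotheses \onlyref{continuity,initialdensity,ass:lagrangian} and \ref{condition dxdt for compact support discrete solution} are all assumed here, to produce a solution $(m,v)\in\S(\GG_{\Delta})\times\BB(\GG_{\Delta})$ of \eqref{eq:MFGsdiscr1} together with $q\in\Lambda_{\Delta}[v]$ and $\mu\in\M_{\Delta}[v]$. Then I would appeal to \cref{rem equivalent schemes}: under \ref{uniquecontrol} the set $\Lambda_{i,k}[v]$ is a singleton for every $(i,k)\in\Ixt^*$, so $q$ is the unique minimizer in \eqref{eq:qMFGs} and necessarily $\mu_{i,k}=\delta_{q_{i,k}}$; substituting this into the continuity equation of \eqref{eq:MFGsdiscr1} collapses it to $m_{i,k+1}=\sum_{j\in\Ix}\beta_i(x_j-\Delta t\,q_{j,k})m_{j,k}$, which is exactly the second line of \eqref{eq:MFGsdiscr2}, while the remaining equations of the two schemes coincide verbatim. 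Hence $(m,v)$, with $q$ as in \eqref{eq:qMFGs}, is a solution of \eqref{eq:MFGsdiscr2}.

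Uniqueness is then immediate from \cref{uniquenessMFGsd2}, since its hypotheses (namely \ref{initialdensity} and \onlyref{uniquecontrol,DiscreteMonotonicity,condition dxdt for compact support discrete solution}) are all contained in those of the corollary. I do not expect any genuine obstacle: the corollary is a pure bookkeeping assembly of \cref{rem equivalent schemes}, \cref{existenceMFGsd1} and \cref{uniquenessMFGsd2}, and the only mild care needed is to check that the listed assumptions simultaneously match the hypotheses of both the existence statement (which does not use \ref{DiscreteMonotonicity} or \ref{uniquecontrol}) and the uniqueness statement (which does not explicitly require \ref{continuity} or \ref{ass:lagrangian} beyond what is used to define the scheme).
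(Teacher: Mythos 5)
Your proposal is correct and follows exactly the paper's own route: the paper deduces this corollary precisely by combining \cref{rem equivalent schemes}, \cref{existenceMFGsd1} and \cref{uniquenessMFGsd2}, just as you do. Your hypothesis bookkeeping (existence not needing \ref{DiscreteMonotonicity} or \ref{uniquecontrol}, uniqueness supplying the rest) matches the paper's intended argument.
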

From \cref{rem equivalent schemes},  \cref{existence and uniqueness MFGsd2} and \cref{convergenceschemeMFGsD1}, a convergence result for \eqref{eq:MFGsdiscr2} holds: 
\begin{cor}\label{convergenceschemeMFGsD2}
Let us assume \onlyref{continuity,C2bounded,initialdensity,ass:lagrangian}.  Consider a sequence  $\Delta_n=(\Delta x_n,\Delta t_n)>0$  such that $(\Delta x_n,\Delta t_n)\to 0$ and $\frac{\Delta x_n}{\Delta t_n}\to 0$ as $n\to \infty$, and suppose \onlyref{DiscreteMonotonicity,condition dxdt for compact support discrete solution,uniquecontrol} hold for any $(\Delta x_n, \Delta t_n)$.  Let $(m^n,v^n)$  be the unique solution to \eqref{eq:MFGsdiscr2} for the corresponding grid parameters $(\Delta x_n,\Delta t_n)$. Let $m_{\Delta_n}:=M_{\Delta_n}[m^n]$ and $v_{\Delta_n}:=V_{\Delta_n}[v^n]$.
Then there  exists  a weak solution $(m^*,v^*)$ to  the MFGs system, with $m^*$ absolutely continuous, such that, u.t.s.,  subsequence, $m_{\Delta_n}(t)\to m^*(t)$ in $\P_1(\RR^d)$ for any $t\in [0,T]$ and $v_{\Delta_n}\rightharpoonup^* v^*$ in $L^\infty(Q_T)$. 
\end{cor}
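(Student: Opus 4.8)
The plan is to deduce this corollary directly from \cref{convergenceschemeMFGsD1} by exploiting the equivalence of the two fully discrete schemes under \ref{uniquecontrol}. First I would check that, under the present hypotheses, all the assumptions needed to invoke \cref{convergenceschemeMFGsD1} are available: the data assumptions \onlyref{continuity,C2bounded,initialdensity,ass:lagrangian}, the mesh requirements $(\Delta x_n,\Delta t_n)\to 0$ and $\Delta x_n/\Delta t_n\to 0$, and the per-grid conditions \onlyref{DiscreteMonotonicity,condition dxdt for compact support discrete solution} are all assumed here as well.

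The main step is \cref{rem equivalent schemes}: since \ref{uniquecontrol} makes $\Lambda_{i,k}[v]$ a singleton for every $v\in\BB(\GG_{\Delta})$ and $(i,k)\in\Ixt^*$, the unique $q\in\Lambda_{\Delta}[v]$ is exactly the control defined by \eqref{eq:qMFGs}, and the only admissible relaxed control is $\mu_{i,k}=\delta_{q_{i,k}}$, so that $\int_{\RR^d}\beta_i(x_j-\Delta t a)\dd[\mu_{j,k}](a)=\beta_i(x_j-\Delta t q_{j,k})$. Substituting this into the transport step of \eqref{eq:MFGsdiscr1} recovers the transport step of \eqref{eq:MFGsdiscr2}, hence for each grid $\Delta_n$ the two schemes have the same solution set. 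By \cref{existence and uniqueness MFGsd2}, the scheme \eqref{eq:MFGsdiscr2} admits a unique solution $(m^n,v^n)$, which is therefore also a solution of \eqref{eq:MFGsdiscr1} for $\Delta_n$. Applying \cref{convergenceschemeMFGsD1} to the sequence $(m^n,v^n)$ then yields a weak solution $(m^*,v^*)$ of the MFGs system such that, up to a subsequence, $m_{\Delta_n}(t)\to m^*(t)$ in $\P_1(\RR^d)\cap L^1(\RR^d)$ for every $t\in[0,T]$ and $v_{\Delta_n}\rightharpoonup^* v^*$ in $L^\infty(Q_T)$.

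Finally, the absolute continuity of $m^*(t)$ is already contained in this conclusion: the $L^1(\RR^d)$-convergence $m_{\Delta_n}(t)\to m^*(t)$ forces the limit $m^*(t)$ to be an $L^1$ function, hence absolutely continuous with respect to Lebesgue measure, for every $t\in[0,T]$ — this is precisely the part of the first step of the proof of \cref{convergenceschemeMFGsD1} in which the narrow limit $m^*(t)$ is identified with the weak $L^1$ limit of the unit-mass densities $m_{\Delta_n}(\cdot,t)$, which are supported in a fixed ball by \cref{SLCEproperties}. I do not expect any genuine obstacle here: the statement is simply an assembly of \cref{rem equivalent schemes}, \cref{existence and uniqueness MFGsd2} and \cref{convergenceschemeMFGsD1}, and the only extra remark required, namely the absolute continuity of the limiting density, is immediate from the $L^1$-convergence already delivered by \cref{convergenceschemeMFGsD1}.
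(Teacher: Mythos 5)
Your proposal is correct and matches the paper's own argument: the corollary is obtained exactly by combining \cref{rem equivalent schemes} (under \ref{uniquecontrol} the two schemes coincide, with $\mu_{i,k}=\delta_{q_{i,k}}$), \cref{existence and uniqueness MFGsd2}, and \cref{convergenceschemeMFGsD1}. Your closing observation that the absolute continuity of $m^*$ is already contained in the $L^1$-identification of the limit in \emph{Step 1} of the proof of \cref{convergenceschemeMFGsD1} is also the paper's implicit reasoning, so there is nothing to add.
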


\section{Discrete Learning Value Iteration and Policy Iteration algorithms for \eqref{eq:MFGsdiscr2}}\label{sec:algo}

The  non-linear backward-forward structure of \eqref{eq:MFGsdiscr2} requires an effective procedure for the approximation of its solution.
We consider a Learning Value Iteration algorithm, we study its convergence, and propose an accelerated version based on a Policy Iteration procedure.

\subsection{Discrete Learning Value Iteration algorithm  (DLVI)}\label{sec: DLVI}
Under \ref{uniquecontrol}, we define a discrete Learning Value Iteration algorithm (\textbf{DLVI}) for \eqref{eq:MFGsdiscr2} inspired by the fictitious play procedure introduced in \cite{CardaliaguetHadikhanloo} for a continuous first and second order MFGs problem and in \cite{HadikhanlooSilva} for a discrete problem. 

Given  $\ov{m}^{(1)}\in \S(\GG_{\Delta})$, we iterate over  $n\geq 1$:
\begin{myenum}
\item Compute $v^{(n)}$ 
\begin{equation*}
            \begin{aligned}
               &v^{(n)}_{i,k}=\min_{a\in \ov{B}_{\infty}(0,C_H)}\left\{I_{\Delta x}[v^{(n)}_k](x_{i}-\Delta t a)+\Delta tL(x_i,a)\right\}+\Delta t  f(x_i,\ov m^{(n)}_k) \, \textrm{for any } (i,k)\in \Ixt^*,\\
               &v^{(n)}_{i,N_T}=g(x_i,\ov m^{(n)}_{N_T}) \quad\quad \textrm{for any } i\in \Ix.  
            \end{aligned}
\end{equation*}
\item Define $q^{(n)}$  
\begin{equation*}
    q^{(n)}_{i,k}\in \amin_{a\in \ov{B}_{\infty}(0,C_H)}\left\{I_{\Delta x}[v^{(n)}_{k+1}](x_{i}-\Delta t a)+\Delta tL(x_i,a)\right\}\qquad \textrm{for any }(i,k)\in \Ixt^*.
\end{equation*}
\item  Compute $m^{(n+1)}$ 
\begin{equation*}
            \begin{aligned}
               &m^{(n+1)}_{i,k+1}=\sum_{j\in \Ix}\beta_{i}(x_j-\Delta t q^{(n)}_{j,k})m^{(n+1)}_{j,k}  &&\textrm{for any } (i,k)\in \Ixt^*, \\
               &m^{(n+1)}_{i,0}=\int_{E_{i}} \dd [m^*_{0}](x) &&  \textrm{for any }i\in \Ix.
            \end{aligned}
\end{equation*}
\item Update  $\ov m^{(n+1)}$ 
\begin{equation*}
    \ov{m}^{(n+1)}_{i,k}=\left(1-\frac{1}{n+1}\right)\ov{m}^{(n)}_{i,k}+\frac{1}{n+1}m^{(n+1)}_{i,k} \qquad \textrm{for any }(i,k)\in \Ixt.
\end{equation*}
\end{myenum}
We consider the following stop criterion: given a tolerance $tol>0$,
\textbf{DLVI}  stops at  $n>1$ if  
    \begin{equation}\label{L1stopDLVI}
       E_{1}(m^{(n+1)},m^{(n)}) <tol \qquad  \text{and}\qquad E_{\infty}(v^{(n)},v^{(n-1)}) <tol
\end{equation}
and the solution returned by  the algorithm is  $(m^{(n+1)},v^{(n)})$. The above errors
 are defined as 
\begin{equation*}
     E_{1}(m,\tilde m):=\text{Int}_{ \GG_{\Delta x}}|m_{N_T}-\tilde m_{N_T}| \qquad  \text{and}\qquad    E_{\infty}(v,\tilde v):=\max_{i\in\Ix}|v_{i,0}-\tilde v_{i,0}|,  
\end{equation*}
 for any  $m,\tilde m\in \S(\GG_{\Delta})$ and $v,\tilde v \in \BB(\GG_{\Delta})$, where \text{Int}$_{ \GG_{\Delta x}}$ denotes the approximation of the integral on $ \RR^d$ by using the rectangle Rule on $ \GG_{\Delta x}$.

\subsubsection{Probabilistic notation}\label{sec: probabilistic notation}
To streamline the proof of \textbf{DLVI} convergence, we  slightly modify the notation, in order to apply the convergence result given in \cite{HadikhanlooSilva}. We  introduce a collection of definitions and give a probabilistic interpretation of \eqref{eq:MFGsdiscr2}.
Let $\S(\GG_{\Delta x})$ be the set of probability measure over $\Ix$, defined in  \cref{A_fully_discrete_scheme_for_the_continuity_equation}. 
   For any fixed  $j \in\Ix$ and $a \in \RR^d $, we define the map $p^j[a]:\Ix \to \RR $ by
\begin{equation*}
    p^j[a](i)\coloneqq \beta_i(x_j-\Delta t a) \qquad \text{ for any } i \in\Ix.
\end{equation*} 
    For any fixed  $q \in  \BB( \GG_{\Delta}^*;\RR^d)$, we define the map $P[q]:\Ix\times\Ixt^* \to \RR $ by
\begin{equation*}
    P[q](j,i,k)\coloneqq p^j[q_{j,k}](i) \qquad \text{ for any } j,i \in\Ix, k \in  \It^*.
\end{equation*} 
For any fixed  $i\in \Ix$, we define the map $c_{i}:\RR^d\times  \S(\GG_{\Delta})\to \RR$ by
\begin{equation}\label{runningcost}
    c_{i}(a,m)\coloneqq\Delta tL(x_i,a)+ \Delta tf(x_i,m).
\end{equation}
The scheme \eqref{eq:MFGsdiscr2} written with this new notation becomes: find $(m,v) \in \S(\GG_{\Delta})\times \BB(\GG_{\Delta})$ such that
\begin{equation}\label{eq:MFGsdiscrProb}
            \begin{aligned}
               &v_{i,k}=\sum_{j\in\Ix}P[\Tilde q](i,j,k)\big(v_{j,k+1}+c_{i}(\Tilde q_{i,k},m_k) \big) &&\text{for any   }(i,k)\in \Ixt^*, \\
                 &m_{i,k+1}=\sum_{j\in \Ix}P[\Tilde q](j,i,k)m_{j,k}  &&\text{for any   } (i,k)\in \Ixt ^*,\\
               &v_{i,N_T}=g(x_i, m_{N_T}), \qquad m_{i,0}=\int_{E_{i}} \dd [m^*_{0}](x)&&  \text{for any  }i\in \Ix, \\  
            \end{aligned}
\end{equation}
where  $\Tilde q \in  \BB( \GG_{\Delta}^*;\RR^d)$ is such that
\begin{equation}\label{eq:oprimalcontrol}
       \Tilde q_{i,k}= \amin_{a\in \ov{B}_{\infty}(0,C_H)}\left\{\sum_{j\in\Ix}p^i[a](j)\big(v_{j,k+1}+c_{i}(a,m_k)\big)\right\}\qquad \text{for any   }(i,k)\in \Ixt ^*.
\end{equation}
We observe that \eqref{eq:oprimalcontrol} does not depend on $m_k$ because the cost 
$c$ is decoupled and depends on 
$m_k$ only through 
$f$, which does not depend on 
$a$. \textbf{DLVI}, with this probabilistic notation, computes the sequences of marginal distributions $m^{(n)}\in\S_{\Delta }$ and value functions $v^{(n)}\in \BB( \GG_{\Delta})$ as follows.  Given  $\ov{m}^{(1)}\in  \S(\GG_{\Delta})$, we iterate on  $n\geq 1$:
\begin{myenum}
\item Compute $v^{(n)}$ 
\begin{equation*}
            \begin{aligned}
               &v^{(n)}_{i,k}=\inf_{a\in \ov{B}_{\infty}(0,C_H)}\left\{\sum_{j\in\Ix}p^i[a](j)\big(v^{(n)}_{j,k+1}+c_{i}(a,\ov m^{(n)}_k)\big)\right\} &&\text{for any   }(i,k) \in \Ixt^*, \\
               &v^{(n)}_{i,N_T}=g(x_i, \ov m^{(n)}_{N_T}) &&  \text{for any  }i\in \Ix.  
            \end{aligned}
\end{equation*}

\item  Define $q^{(n)}$  
\begin{equation}\label{optimalcontrolDLVI}
    q^{(n)}_{i,k}\in  \amin_{a\in \ov{B}_{\infty}(0,C_H)}\left\{\sum_{j\in\Ix}p^i[a](j)\big(v^{(n)}_{j,k+1}+c_{i}(a,\ov m^{(n)}_k)\big)\right\}\qquad \text{for any   }(i,k)\in \Ixt ^*.
\end{equation}

\item   Compute $m^{(n+1)}$ 
\begin{equation*}
            \begin{aligned}
               &m^{(n+1)}_{i,k+1}=\sum_{j\in \Ix}P[q^{(n)}](j,i,k)m^{(n+1)}_{j,k}  &&\text{for any   }i\in \Ixt^*, \\
               &m^{(n+1)}_{i,0}=\int_{E_{i}} \dd [m^*_{0}](x) &&  \text{for any  }i\in \Ix.\\  
            \end{aligned}
\end{equation*}
\item  Update $\ov m^{(n+1)}$ 
\begin{equation*}
    \ov{m}^{(n+1)}_{i,k}=\left(1-\frac{1}{n+1}\right)\ov{m}^{(n)}_{i,k}+\frac{1}{n+1}m^{(n+1)}_{i,k} \qquad \text{for any } (i,k)\in \Ixt.
\end{equation*}
\end{myenum}
\subsubsection{Probabilistic interpretation of $(\text{MFGs}_d^2)$}
Given $q \in  \BB( \GG_{\Delta}^*;\RR^d)$ and $m_0^*\in \P_1(\RR^d)$, the pair $(m^*_0,P[q])$ induces a probability distribution over $\S(\GG_{\Delta})$ with marginal distribution given by 
\begin{equation*}\label{eq:definition_M_Q_recursive}
\begin{aligned}
m^{m^*_0}_{i,0}[q]&\coloneqq \int_{E_{i}} \dd [m^*_{0}](x) &&\text{for any } i\in \Ix.\\
m^{m^*_0}_{i,k+1}[q]&\coloneqq \sum_{j \in \Ix}P[q](j,i,k)m^{m^*_0}_{j,k}[q]  && \text{for any } (i,k)\in \Ixt^*.
\end{aligned}
\end{equation*}
Given $m\in \S(\GG_{\Delta})$, we define   $J_m :  \BB( \GG_{\Delta}^*;\RR^d)\to \RR$ by 
\begin{align*}
J_m(q)&\coloneqq \sum_{k=0}^{N_T-1} \sum_{j,i \in \Ix} m^{m^*_0}_{j,k}[q]P[q](j,i, k) c_{j} ( q_{j,k} , m_k) + \sum_{j\in \Ix} m^{m^*_0}_{j,N_T}[q]  g(j , m_{N_T}),  \\ 
&=\sum_{k=0}^{N_T-1} \sum_{j \in \Ix}m^{m^*_0}_{j,k}[q] c_{j} ( q_{j,k} , m_k) + \sum_{j\in \Ix} m^{m^*_0}_{j,N_T}[q]   g(j ,m_{N_T}).
\end{align*}
We consider the following  problem: find $\Tilde q  \in  \BB( \GG_{\Delta}^*;\RR^d)$ such that 
\begin{equation}\label{eq:MFGsdiscrJ}
   \Tilde  q \in \amin_{ \substack{q \in   \BB( \GG_{\Delta}^*;\RR^d)\\ ||q||_{L^\infty(\GG_{\Delta}^*)} \leq C_H}}J_m(q) \qquad \text{ with } \quad m=m^{m^*_0}[\Tilde  q].
\end{equation}
\begin{rem}
    Note that if $(v,m)$ is the solution to \eqref{eq:MFGsdiscrProb}, then $\Tilde q$ satisfying \eqref{eq:oprimalcontrol} solves \eqref{eq:MFGsdiscrJ}.
\end{rem}
\begin{rem}
    If $q^{(n)}$ solves \eqref{optimalcontrolDLVI}, then 
    \begin{equation*}
          q^{(n)} \in \amin_{ \substack{q \in   \BB( \GG_{\Delta}^*;\RR^d)\\ ||q||_{L^\infty(\GG^*_{\Delta})} \leq C_H}}J_{\ov m^{(n)}}(q).
    \end{equation*}
\end{rem}

\subsubsection{Convergence Analysis of DLVI}\label{Convergence Analysis of DLVI}
We show that the sequence $(m^{(n)},v^{(n)})$ generated  by \textbf{DLVI} converges to  the solution to \eqref{eq:MFGsdiscr2} (or, equivalently, \eqref{eq:MFGsdiscrProb}).
   To this end, we  recall  an abstract result established in \cite[Theorem 3.1]{HadikhanlooSilva}, that will allow us to prove the convergence of \textbf{DLVI}. 
Let $\mathcal{X}$ and $\mathcal{Y}$  be two Polish spaces and $\mathcal{Z} \subseteq \mathcal{X}$ be a compact set.
Let  $\mathcal{P}(\mathcal{Z})$  the set of probability measures on $\mathcal{Z}$. The set $\mathcal{P}(\mathcal{Z})$ is compact. Let $\mathcal{F}:\mathcal{Z} \times \mathcal{P}(\mathcal{Z}) \to \RR$ be a given continuous function. Given $x_1\in \mathcal{Z}$, set $\ov{\eta}_1:= \delta_{x_1}$ and for any $n\geq 1$ define
\begin{equation}\label{GenFP}
x_{n+1} \in \amin_{x \in \mathcal{Z}} \mathcal{F}(x,\ov \eta_n), \qquad \ov \eta_{n+1} = \frac{1}{n+1} \sum_{k=1}^{n+1} \delta_{x_{k}} = \frac{n}{n+1} \ov \eta_n + \frac{1}{n+1}\delta_{x_{n+1}}.
\end{equation}
We consider the convergence problem of  the sequence $(\ov \eta_{n})$  to some $\tilde{\eta}\in \mathcal{P}(\mathcal{Z})$ satisfying that
\begin{equation}\label{eq:convergentProblem}
\mbox{supp}(\tilde{\eta}) \subseteq \amin_{x\in \mathcal{Z}}\mathcal{F}(x,\tilde{\eta}).
\end{equation}
\begin{theo}\label{Generalisedfictitiousplay}\cite[Theorem 3.1]{HadikhanlooSilva}
Consider the sequence $(x_n,\ov{\eta}_n)$ defined by \eqref{GenFP} and assume:
\begin{itemize}
\item[{\rm(i)}]$\mathcal{F}$ is monotone, i.e. 
 \begin{equation}\label{monotonocityF}
    \int_{\mathcal{Z}}\Big(\mathcal{F}(x,\eta)-\mathcal{F}(x,\eta')\Big) \dd [\eta-\eta'](
x)\geq 0    \qquad \text{for any } \eta,\eta'\in \mathcal{P} (\mathcal{Z}).
\end{equation}
\item[{\rm(ii)}] $\mathcal{F}$ is Lipschitz, when  $ \mathcal{Z}$ is endowed with the distance $d$  and $\mathcal{P} (\mathcal{Z})$ is endowed with the distance $d_1$, and there exists $C>0$ such that 
\begin{equation*}
| \mathcal{F}(x , \eta) - \mathcal{F}(x , \eta') - \mathcal{F}(x' , \eta) + \mathcal{F}(x' , \eta') | \leq C   d(x,x')   d_1 (\eta,\eta'),
\end{equation*}
for any $x$, $x' \in \mathcal{Z}$, and  $\eta$, $\eta' \in \mathcal{P} (\mathcal{Z})$.
\item[{\rm(iii)}] d$(x_{n+1},x_{n})\to 0$ as $n\to \infty$.
\end{itemize}
Then, every limit point $\Tilde{\eta}$ of $\ov{\eta}_n$(there exists at least one) solves \eqref{eq:convergentProblem}.
\end{theo}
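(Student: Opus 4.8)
Although this statement is quoted from \cite[Theorem 3.1]{HadikhanlooSilva} and could simply be cited, here is the line of argument I would follow. The plan is to prove that the \emph{exploitability}
\begin{equation*}
   \epsilon_n\coloneqq\int_{\mathcal Z}\mathcal F(x,\ov\eta_n)\dd\ov\eta_n(x)-\min_{x\in\mathcal Z}\mathcal F(x,\ov\eta_n)\ \ (\ge 0)
\end{equation*}
tends to $0$, and then to read off \eqref{eq:convergentProblem}. The last implication is soft: $\mathcal P(\mathcal Z)$ is compact, so a limit point $\tilde\eta$ exists; along a subsequence $\ov\eta_{n_j}\to\tilde\eta$, the uniform continuity of $\mathcal F$ on the compact $\mathcal Z\times\mathcal P(\mathcal Z)$ (which also makes $\eta\mapsto\min_x\mathcal F(x,\eta)$ continuous) passes $\epsilon_{n_j}\to0$ to the limit, giving $\int_{\mathcal Z}\mathcal F(x,\tilde\eta)\dd\tilde\eta(x)=\min_x\mathcal F(x,\tilde\eta)$; since $\mathcal F(\cdot,\tilde\eta)\ge\min_y\mathcal F(y,\tilde\eta)$ pointwise and is continuous, this forces $\supp(\tilde\eta)\subseteq\amin_{x\in\mathcal Z}\mathcal F(x,\tilde\eta)$.

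First I would set up a one-step energy identity for $\Phi_n\coloneqq\int_{\mathcal Z}\mathcal F(x,\ov\eta_n)\dd\ov\eta_n(x)$. Using $\ov\eta_{n+1}=\tfrac n{n+1}\ov\eta_n+\tfrac1{n+1}\delta_{x_{n+1}}$ and $\mathcal F(x_{n+1},\ov\eta_n)=\min_x\mathcal F(x,\ov\eta_n)$, a direct expansion gives
\begin{equation*}
   \Phi_{n+1}=\Phi_n-\frac{\epsilon_n}{n+1}+\theta_n,\qquad\theta_n\coloneqq\int_{\mathcal Z}\big(\mathcal F(x,\ov\eta_{n+1})-\mathcal F(x,\ov\eta_n)\big)\dd\ov\eta_{n+1}(x).
\end{equation*}
The monotonicity assumption (i), applied to the pair $(\ov\eta_{n+1},\ov\eta_n)$ whose difference is $\tfrac1{n+1}(\delta_{x_{n+1}}-\ov\eta_n)$, yields $\theta_n\le\mathcal F(x_{n+1},\ov\eta_{n+1})-\min_x\mathcal F(x,\ov\eta_n)$; substituting this and subtracting $\min_x\mathcal F(x,\ov\eta_{n+1})$ from both sides leaves the clean recursion
\begin{equation*}
   \epsilon_{n+1}\le\frac{n}{n+1}\,\epsilon_n+\tau_n,\qquad\tau_n\coloneqq\mathcal F(x_{n+1},\ov\eta_{n+1})-\min_{x}\mathcal F(x,\ov\eta_{n+1})\ \ (\ge0).
\end{equation*}

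Next I would estimate $\tau_n$. Writing $\min_x\mathcal F(x,\ov\eta_{n+1})=\mathcal F(x_{n+2},\ov\eta_{n+1})$, the mixed-Lipschitz bound (ii) with points $x_{n+1},x_{n+2}$ and measures $\ov\eta_{n+1},\ov\eta_n$, combined with $\mathcal F(x_{n+1},\ov\eta_n)=\min_x\mathcal F(x,\ov\eta_n)\le\mathcal F(x_{n+2},\ov\eta_n)$ and $d_1(\ov\eta_{n+1},\ov\eta_n)\le\frac1{n+1}\mathrm{diam}(\mathcal Z)$, gives $\tau_n\le\frac{C\,\mathrm{diam}(\mathcal Z)}{n+1}\,d(x_{n+1},x_{n+2})$. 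Plugging this into the recursion, multiplying by $n+1$ and telescoping yields $(n+1)\epsilon_{n+1}\le\epsilon_1+C\,\mathrm{diam}(\mathcal Z)\sum_{k=1}^{n}d(x_{k+1},x_{k+2})$, hence $\epsilon_{n+1}\le\frac{\epsilon_1}{n+1}+\frac{C\,\mathrm{diam}(\mathcal Z)}{n+1}\sum_{k=1}^{n}d(x_{k+1},x_{k+2})$. Assumption (iii) gives $d(x_{k+1},x_{k+2})\to0$, so by Cesàro the right-hand side tends to $0$, i.e. $\epsilon_n\to0$, which closes the argument.

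The main obstacle is precisely the estimate on $\tau_n$. The crude bound $\tau_n=o(1)$ coming only from (iii) and uniform continuity of $\mathcal F$ is \emph{not} enough: fed into the telescoped recursion, $\epsilon_{n+1}\le\frac{n}{n+1}\epsilon_n+o(1)$ only bounds $\epsilon_n$, it does not send it to $0$ (a constant sequence satisfies this). One genuinely needs the extra factor $\tfrac1{n+1}$ in the bound on $\tau_n$, and that is exactly where the bilinear Lipschitz hypothesis (ii) is indispensable. Thus it is the interplay of all three assumptions — monotonicity (i) to bound $\theta_n$, the mixed Lipschitz bound (ii) to gain the $\tfrac1{n+1}$ in $\tau_n$, and the stabilisation (iii) of best responses to make the telescoped residual Cesàro-negligible — no two of them alone, that yields $\epsilon_n\to0$ and hence that \emph{every} limit point of $\ov\eta_n$, not merely a subsequential one, solves \eqref{eq:convergentProblem}.
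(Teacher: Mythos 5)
Your argument is correct and is essentially the standard exploitability argument behind the cited result: the paper itself offers no proof of this theorem (it is quoted from \cite[Theorem 3.1]{HadikhanlooSilva}), and your recursion $\epsilon_{n+1}\le\tfrac{n}{n+1}\epsilon_n+\tau_n$ obtained from monotonicity, the bilinear Lipschitz bound giving the crucial $\tfrac1{n+1}$ factor in $\tau_n$, and the Ces\`aro step using (iii) reproduce the proof strategy of that reference. The concluding passage from $\epsilon_n\to 0$ to $\supp(\tilde\eta)\subseteq\amin_{x\in\mathcal Z}\mathcal F(x,\tilde\eta)$ via compactness and uniform continuity is also sound, so no gap remains.
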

In order to apply \cref{Generalisedfictitiousplay}, 
we define $\mathcal{Z}\coloneqq \{q :  \GG_{\Delta}^*\to \RR^d, ||q||_{L^\infty(\GG_{\Delta}^*)} \leq C_H\}$. Given $\eta \in  \mathcal{P} ( \mathcal{Z})$, we define $M^\eta\in   \S(\GG_{\Delta})$ and $\mathcal{F}:\mathcal{Z}\times \mathcal{P} ( \mathcal{Z}) \to \RR $ by
\begin{equation}\label{GenFormofCostFunction}
    M^{\eta}_k \coloneqq \int_{\mathcal{Z}} m^{m^*_0}_k[q]  \dd [\eta](q),  \,\,\, \text{for any }  k\in \mathcal{I}_{\Delta t},\quad \text{and}  \quad  \mathcal{F}(q,\eta)  \coloneqq  J_{M^\eta}(q).
\end{equation}
We define the fictitious play procedure: given $q^{(1)}\in \mathcal{Z}$ and $\eta \in  \mathcal{P} ( \mathcal{Z})$ such that $\ov{\eta}^{(1)}=\delta_{q^{(1)}}$, for $n\geq 1$, define 
\begin{equation}\label{ficitiousplayF}
q^{(n+1)} \in \amin_{q\in  \mathcal{Z}} \mathcal{F}(q,\ov \eta_n), \qquad \ov \eta_{n+1} = \frac{1}{n+1} \sum_{k=1}^{n+1} \delta_{q^{(k)}} = \frac{n}{n+1} \ov \eta_n + \frac{1}{n+1}\delta_{q^{(n+1)}}.
\end{equation}
\begin{rem}
    Notice that $M^{\ov{\eta}_n}=\ov{m}^{(n)}$.
\end{rem}

\subsubsection{Verification of the assumptions of \cref{Generalisedfictitiousplay}}
Let us consider the following assumption: 
\begin{Hassum}[resume]
    \item  \label{lipschitz in m}
    
    There exist  $C_{F,3},C_{G,3}\in L^1_{loc}(\RR^d)$  that are  positive almost everywhere and 
\begin{equation*}
\begin{aligned}
    |F(x,\eta)-F(x,\tilde \eta)|\leq C_{F,3}(x) \dd_1(\eta,\tilde \eta),  \qquad   |G(x,\eta)-G(x,\tilde \eta)|\leq C_{G,3}(x) \dd_1(\eta,\tilde \eta)
    \end{aligned}
\end{equation*}
for a.e. $x\in \RR^d$ and for any $\eta,\tilde \eta \in \P_1(\RR^d)$.
\end{Hassum}

Similarly to the proof of \cite[Lemma 3.2, Lemma 3.3, Lemma 3.4]{HadikhanlooSilva}, we can verify that, under \onlyref{continuity,lipschitz in m,C2bounded,ass:lagrangian,DiscreteMonotonicity,uniquecontrol,initialdensity,condition dxdt for compact support discrete solution}, the problem \eqref{ficitiousplayF} satisfies the assumptions of \cref{Generalisedfictitiousplay}. By applying \cref{Generalisedfictitiousplay} to \eqref{ficitiousplayF}, the following convergence result for \textbf{DLVI} holds true. The proof employs the same technique as in the proof of \cite[Theorem 3.2]{HadikhanlooSilva}.
\begin{theo}\label{thm:convercenceDLVI}
    Let us assume \onlyref{continuity,C2bounded,lipschitz in m,ass:lagrangian,DiscreteMonotonicity,uniquecontrol,initialdensity,condition dxdt for compact support discrete solution}. Let $(q^{(n)},v^{(n)},m^{(n)}, \ov m^{(n)})$ be the sequences defined by {\rm \textbf{DLVI}}. Then $(q^{(n)},v^{(n)},m^{(n)}, \ov m^{(n)})\to ( q,v,m, m)$, where $(m,v)$ is the unique solution  to \eqref{eq:MFGsdiscr2} and $q$ the unique element in $\Lambda_{\Delta}[v]$, for any initial guess $\ov m^{(1)}\in \S(\GG_{\Delta})$.
\end{theo}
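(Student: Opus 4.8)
The plan is to deduce Theorem~\ref{thm:convercenceDLVI} from the abstract fictitious-play result \cref{Generalisedfictitiousplay} applied to the procedure \eqref{ficitiousplayF}, following the blueprint of \cite[Theorem 3.2]{HadikhanlooSilva}. First I would verify the three hypotheses of \cref{Generalisedfictitiousplay} for the map $\mathcal{F}(q,\eta)=J_{M^\eta}(q)$ on the compact metric space $\mathcal{Z}=\{q:\GG_\Delta^*\to\RR^d,\ \|q\|_{L^\infty(\GG_\Delta^*)}\le C_H\}$, where $\mathcal{Z}$ is closed and bounded in a finite-dimensional-per-node sense (recall that by \ref{condition dxdt for compact support discrete solution} and \cref{SLCEproperties} all the relevant mass stays in a fixed compact set, so only finitely many grid nodes contribute). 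Hypothesis (i), the monotonicity \eqref{monotonocityF} of $\mathcal{F}$, is the analogue of \cite[Lemma 3.4]{HadikhanlooSilva}: for $\eta,\eta'\in\mathcal P(\mathcal Z)$ set $m=M^\eta$, $m'=M^{\eta'}$; expanding $J_{M^\eta}(q)-J_{M^{\eta'}}(q)$ and integrating against $\dd[\eta-\eta']$, the linear-in-$m$ running/terminal cost terms $c_j(\cdot,m_k)=\Delta t L(x_j,\cdot)+\Delta t f(x_j,m_k)$ and $g(j,m_{N_T})$ produce, after using the bilinearity of the pairing $m_k\mapsto\int m^{m_0^*}[q]\,\dd\eta$, exactly the discrete Lasry--Lions expressions $\sum_{i}(f(x_i,m_k)-f(x_i,m'_k))(m_{i,k}-m'_{i,k})$ and $\sum_i(g(x_i,m_{N_T})-g(x_i,m'_{N_T}))(m_{i,N_T}-m'_{i,N_T})$, which are nonnegative by \ref{DiscreteMonotonicity}; the terms not involving $m$ cancel. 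Hypothesis (ii), the cross-Lipschitz bound, follows from \ref{lipschitz in m} together with the Lipschitz dependence of $m^{m_0^*}_k[q]$ on $q$ (the maps $a\mapsto\beta_i(x_j-\Delta t\,a)$ are Lipschitz and there are boundedly many nonzero terms), plus boundedness of $L$, $D_xL$ on the compact control set and \cref{C2bounded}; this mirrors \cite[Lemma 3.3]{HadikhanlooSilva}.

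The substantive point is hypothesis (iii), $d(q^{(n+1)},q^{(n)})\to0$, which is where \ref{uniquecontrol} enters decisively. Here I would argue as in \cite[Lemma 3.2]{HadikhanlooSilva}: since $\ov m^{(n+1)}=\frac{n}{n+1}\ov m^{(n)}+\frac1{n+1}m^{(n+1)}$ and the $m$'s live in a fixed compact subset of $\S(\GG_{\Delta x})$, one has $d_1(\ov m^{(n+1)}_k,\ov m^{(n)}_k)=O(1/n)\to0$; by the stability and Lipschitz estimates of \cref{properties decrete value function} and continuity of $f,g$ in their measure argument, the corresponding value functions satisfy $v^{(n+1)}-v^{(n)}\to0$ uniformly on the relevant compact grid; then the minimized cost functions $\omega^{(n)}(a)=I_{\Delta x}[v^{(n)}_{k+1}](x_i-\Delta t a)+\Delta t L(x_i,a)$ converge uniformly, and since by \ref{uniquecontrol} each has a \emph{unique} minimizer, \cref{Compact Lemma 2.4 CapuzzoDolcetta} (the stability-of-argmin lemma already invoked in Step~4 of the proof of \cref{convergenceschemeMFGsD1}) gives $q^{(n+1)}_{i,k}-q^{(n)}_{i,k}\to0$ for every $(i,k)$, hence $d(q^{(n+1)},q^{(n)})\to0$. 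I expect this to be the main obstacle, because it requires propagating the $O(1/n)$ control of $\ov m^{(n)}$ through the backward HJB recursion uniformly in $n$, and the argmin-stability step genuinely needs the singleton assumption \ref{uniquecontrol} rather than mere compactness of the minimizer set.

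Granting the three hypotheses, \cref{Generalisedfictitiousplay} yields a limit point $\tilde\eta\in\mathcal P(\mathcal Z)$ of $\ov\eta_n$ with $\supp(\tilde\eta)\subseteq\amin_{q\in\mathcal Z}\mathcal F(q,\tilde\eta)$, i.e. (by the remarks following \eqref{eq:MFGsdiscrJ}) every $q$ in its support solves \eqref{eq:MFGsdiscrJ} with $m=M^{\tilde\eta}=m^{m_0^*}[q]$; under \ref{uniquecontrol} the optimal control of \eqref{eq:MFGsdiscr2} is the \emph{unique} element $q$ of $\Lambda_\Delta[v]$ for the unique solution $(m,v)$ guaranteed by \cref{existence and uniqueness MFGsd2}, so $\tilde\eta=\delta_q$ and $M^{\tilde\eta}=m$. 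Consequently $\ov m^{(n)}=M^{\ov\eta_n}\to M^{\tilde\eta}=m$ along the (a priori sub-)sequence; uniqueness of the limit upgrades this to convergence of the full sequence. Finally, feeding $\ov m^{(n)}\to m$ back into step~i) gives $v^{(n)}\to v$ (via \cref{properties decrete value function} and continuity of $f,g$), then \ref{uniquecontrol} and \cref{Compact Lemma 2.4 CapuzzoDolcetta} give $q^{(n)}\to q$, and the continuity-equation recursion in step~iii) propagates $q^{(n)}\to q$ to $m^{(n)}\to m$; since the conclusion depends only on $\ov m^{(n)}\to m$ and not on $\ov m^{(1)}$, it holds for every initial guess $\ov m^{(1)}\in\S(\GG_\Delta)$.
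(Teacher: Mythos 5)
Your proposal takes essentially the same route as the paper: the paper's argument is precisely to verify the hypotheses of \cref{Generalisedfictitiousplay} for the fictitious play \eqref{ficitiousplayF} following \cite[Lemmas 3.2--3.4]{HadikhanlooSilva} (monotonicity from \ref{DiscreteMonotonicity}, the cross-Lipschitz bound from \ref{lipschitz in m}, and $d(q^{(n+1)},q^{(n)})\to 0$ via \ref{uniquecontrol} and argmin stability) and then to conclude with the technique of \cite[Theorem 3.2]{HadikhanlooSilva}, so your sketch is, if anything, more explicit than the paper's. The only slight overstatement is $\tilde\eta=\delta_q$ (minimizers of $J_{M^{\tilde\eta}}$ are unconstrained at mass-free nodes, so the support need not be a singleton), but what is actually needed--and what your use of \ref{uniquecontrol} together with the uniqueness in \cref{existence and uniqueness MFGsd2} delivers--is $M^{\tilde\eta}=m$, so the conclusion is unaffected.
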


\subsection{Discrete Policy Iteration algorithm (DPI)}\label{DPI}
We consider a discrete Policy Iteration algorithm \textbf{(DPI)} to approximate \eqref{eq:MFGsdiscr2}. The algorithm is inspired by policy iteration procedure introduced in \cite{CacaceCamilliGoffi} for a continuous second order MFGs problem. In \textbf{DPI} the HJB equation is replaced by a linearized transport equation that we  discretize using  a classic SL scheme (see \cite{FalconeFerretti}).
 Let $\rho\in \mathcal{C}_{c}^{\infty}(\RR^d)$ be such that $\rho \geq 0 $ and $\int_{\RR^d} \rho(x)\text{d}x=1$. For $\epsilon>0$, we consider the mollifier $\rho_{\epsilon}(x)=\frac{1}{\epsilon^{d}}\rho(\frac{x}{\epsilon})$.
 
Given  ${q}^{(1)}\in \BB( \GG_{\Delta}^*;\RR^d)$, we iterate over  $n\geq 0$:
\begin{myenum}
    \item  Compute $m^{(n)}$
\begin{equation*} 
            \begin{aligned}
               m^{(n)}_{i,k+1}&=\sum_{j\in \Ix}\beta_{i}(x_j-\Delta t{q}^{(n)}_{j,k})m^{(n)}_{j,k}  &&\text{for any  }(i,k)\in \Ixt^*, \\
               m^{(n)}_{i,0}&=\int_{E_{i}}\text{d}[m^*_{0}](x) &&  \text{for any  }i\in \Ix.  
            \end{aligned}
\end{equation*}
\item Compute $v^{(n)}$ 
\begin{equation*}
            \begin{aligned}
               v^{(n)}_{i,k}&= I[v^{(n)}_{k+1}](x_{i}-q^{(n)}_{i,k}\Delta t)+\Delta tL(x_i,q^{(n)}_{i,k})  +\Delta t  f(x_i,m^{(n)}_k) &&\text{for any   }(i,k)\in \Ixt^*, \\
               v^{(n)}_{i,N_T}&=g(x_i,m^{(n)}_{N_T}) &&  \text{for any  }i\in \Ix.   
            \end{aligned}
\end{equation*}
\item Define $D_x v^{(n)}_{\Delta,\epsilon}(\cdot,t):=D_x \rho_{\epsilon}* V_{\Delta}[v^{(n)}](\cdot,t)$, for  $t\in[0,T]$ and set 

\begin{equation*}
     q^{(n+1)}_{i,k}= D_pH(x_i,D_x v^{(n)}_{\Delta,\epsilon}(x_i,t_k)) \qquad \text{for  any } (i,k)\in \Ixt^*.
\end{equation*}
\end{myenum}
We consider the following stop criterion: given a tolerance $tol>0$,
\textbf{DLVI}  stops at  $n>1$ if  
    \begin{equation}\label{L1stopDPI}
       E_{1}(m^{(n)},m^{(n-1)}) <tol \qquad  \text{and}\qquad E_{\infty}(v^{(n)},v^{(n-1)}) <tol,
\end{equation}
and the solution returned by  the algorithm is  $(m^{(n)},v^{(n)})$.
\begin{rem}\label{rem: advantage of DPI}
In the third step of \textbf{DPI}, in order to have a well defined gradient, the convolution with the mollifier is introduced as in  \cite{CarliniSilva2014}. 
The advantage of using the gradient directly for the policy update, rather than solving a minimization problem, is that it eliminates the minimization step, thereby speeding up the algorithm.
\end{rem}

\subsection{Accelerated Discrete Learning Value Iteration Algorithm (ADLVI)}\label{ADLVI} 
We present an accelerated iterative algorithm \textbf{ADLVI} for \eqref{eq:MFGsdiscr2}, which is constructed by coupling \textbf{DLVI} and \textbf{DPI}, and switching from a coarse grid to a fine grid. More specifically, the algorithm consists in implementing \textbf{DLVI} on a fine grid using as initial guess the distribution computed by the \textbf{DPI} implemented on a coarse grid. For this accelerated algorithm we need to have 
$(\Delta x_c,\Delta t _c,tol_c,\epsilon_c)>0$ for the first part of \textbf{ADLVI}, and $(\Delta x_f,\Delta t_f,tol_f)>0$ such that $\Delta x_f\leq \Delta x_c$ and $\Delta t_f\leq \Delta t_c$ for the last one. We set $\Delta_c:=(\Delta x_c,\Delta t _c)$ and $\Delta_f:=(\Delta x_f,\Delta t _f)$.
\textbf{ADLVI} is made up of these following three steps: 
\begin{myenum}
    \item Given an initial guess ${q}^{(1)}\in \mathcal{B}( \GG_{\Delta_c};\RR^d)$ and $\epsilon_c$, we iterate for  $n\geq 0$ using \textbf{DPI}  on the coarse mesh $\mathcal{G}_{\Delta_c}$, until the stop citerion in \eqref{L1stopDPI}, with tolerance $tol_c$, is satisfied at iteration $n_c$, returning the approximate solutions $(m^{(n_c)},v^{(n_c)},q^{(n_c+1)})$. 
   \item Define $\ov m^{(1)}\in \S(\GG_{\Delta_f})$ as  
   \begin{equation}\label{step 2 ADVI}
   \ov{m}^{(1)}=I[m^{(n_c)}] 
   \end{equation}
   where $I:\S(\GG_{\Delta_c})\to \S(\GG_{\Delta_f})$
is a  interpolation operator.
\item Given the initial guess $\ov 
m ^{(1)}\in \S(\GG_{\Delta_f})$ defined in \eqref{step 2 ADVI}, we iterate for   $n\geq 1$ using \textbf{DLVI} on the fine mesh $\GG_{\Delta_f}$, until the stop criterion in \eqref{L1stopDLVI},  with tolerance $tol_f$, is satisfied  at iteration $n_f$. 
\end{myenum}
The approximated solution given by \textbf{ADLVI} is $(m^{(n_f+1)},v^{(n_f)})$.

The goal of developing \textbf{ADLVI} is to exploit the low computational cost of \textbf{DPI} (see \Cref{rem: advantage of DPI}) to construct an initial guess 
$\ov m ^{(1)}$  for the convergent algorithm \textbf{DLVI} (see \cref{thm:convercenceDLVI}), so that it requires fewer iterations than it would with a random initialization. Numerically, we will observe that, for a fixed grid refinement, \textbf{DLVI} requires more iterations to converge than \textbf{ADLVI}, making \textbf{ADLVI} computationally more efficient.

\section{Numerical tests}\label{sec:tests}

We show the performance of the proposed algorithms
by approximating the solution to three different MFGs systems. In the first problem, the exact solution is known and it allows us to study the accuracy of \textbf{DLVI}. In this case, we focus on the one-dimensional setting.  
In the second and third problem, which are one and  two-dimensional respectively, we compare the performance of \textbf{DLVI} and \textbf{ADLVI}.
We denote by $(m^L,v^L)$ and $(m^{ACC},v^{ACC})$ the solutions computed by the two methods.
We initialize the iterations of \textbf{DLVI} with $\overline m^{(1)}_{i,k}=\int_{E_i}\dd [m_0^*](x)$ for any $(i,k)\in \Ixt$, and the iterations of \textbf{ADLVI} with $q^{(1)}_{i,k}=\partial_pH(x_i,\nabla G(x_i,m_0^*))$ for any $(i,k)\in \mathcal{I}^*_{\Delta _c}$.
In all the proposed tests, we choose   $tol=1E-6$. Finally, the numerical simulations have been carried
out in MATLAB R2022a on a computer with processor AMD Ryzen 5 5600H with Radeon Graphics 3.30 GHz.
We approximate the cell average of $F$, $G$, and $m_0$ over  $E_i$ by their pointwise values at the cell center $x_i$. 
Thanks to the regularity of the data, this approximation is consistent with first-order accuracy.

 \subsection{Example with analytical solution}\label{test1}
 We consider the MFGs system with quadratic Hamiltonian $H(x,p)=|p|^2/2$, coupling terms 
\begin{equation*}
    F(x,\eta)=\frac{1}{2}\left(x-\int_{\RR^d}y\dd[\eta](y) \right)^{2}, \quad G(x,\eta)=0,
\end{equation*}
and  initial measure $m^*_0$ given as  the density of a Gaussian random vector with mean $\mu^*_0\in\RR^d$ and covariance matrix $\Sigma_0\in\ \RR^{d\times d}$, assumed, for simplicity,
to be diagonal. 
This problem does not satisfy {\rm(H2)} and {\rm(H3)}, however, it admits a unique explicit solution $(m^*,v^*)$ (see \cite{CalzolaCarliniSilva2024}) that allows us to study the numerical convergence rates of \textbf{DLVI}.

In the following simulations, we consider this example set in the domain $[-2,2]^d\times[0,T]$. We choose $d=1$, $T=1$, $\mu_0=0.1$ and $\Sigma _{0}=0.5$.

We test the performance of  \textbf{DLVI} for $\Delta t=\sqrt{\Delta x}/2$ and for $\Delta t=\Delta x^{2/3}/2$, in combination with several refinements of the space grid.  The time step $\Delta t=\sqrt{\Delta x}/2$ is 
chosen to minimize the truncation error term $\mathcal{O}\left(\Delta t +\frac{\Delta x}{\Delta t}\right)$ which appears in the proof of Theorem \ref{convergenceschemeMFGsD1}.
Figure \ref{fig: convergence rate sqrt} displays the error behavior between the exact and numerical solution, $E_1(m^L, \hat m^*)$ and $E_\infty(v^L, \hat v^*)$, under successive space grid step refinements. The plot in the
left shows that the error $E_1(m^L, \hat{m}^*)$ follows a convergence rate of order $0.523$, while the right plot shows that $E_\infty(v^L, \hat{v}^*)$ has a convergence rate $0.482$, both represented by the slope of the triangles. Therefore, the overall convergence rate is approximately $1/2$, which is consistent with the truncation error.
The errors $E_1(m^L, \hat m^*)$ and $E_\infty(v^L, \hat v^*)$ corresponding to the  the choice $\Delta t = \Delta x^{2/3}/2$ proposed in \cite{CarliniSilvaZorkot} for the same problem, are shown in Figure \ref{fig: convergence rate}.
The left plot   shows that the error $E_1(m^L, \hat{m}^*)$ follows a convergence rate of order  $0.623$, while the right plot $E_\infty(v^L, \hat{v}^*)$  follows a convergence rate of order $0.579$. 
These rates overestimate the  predicted order given by this choice, which corresponds to $\mathcal{O}\left(\Delta x^{1/3}\right)$, 
  suggesting that the truncation error obtained in  Theorem \ref{convergenceschemeMFGsD1}  might  not be sharp and that an improved rate could, in principle, be obtained.

 \begin{figure}[!t]
        \centering        
        \includegraphics[width=4.7cm]{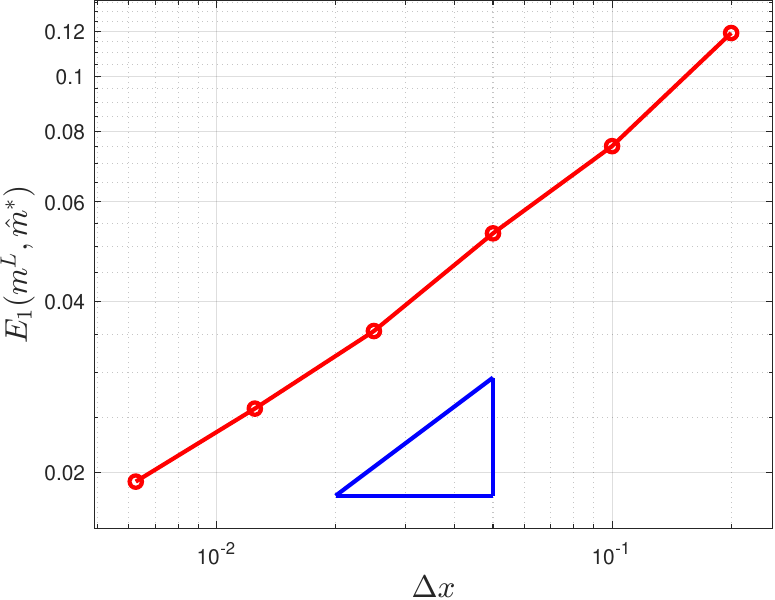}
                \hspace{5mm}
        \includegraphics[width=4.7cm]{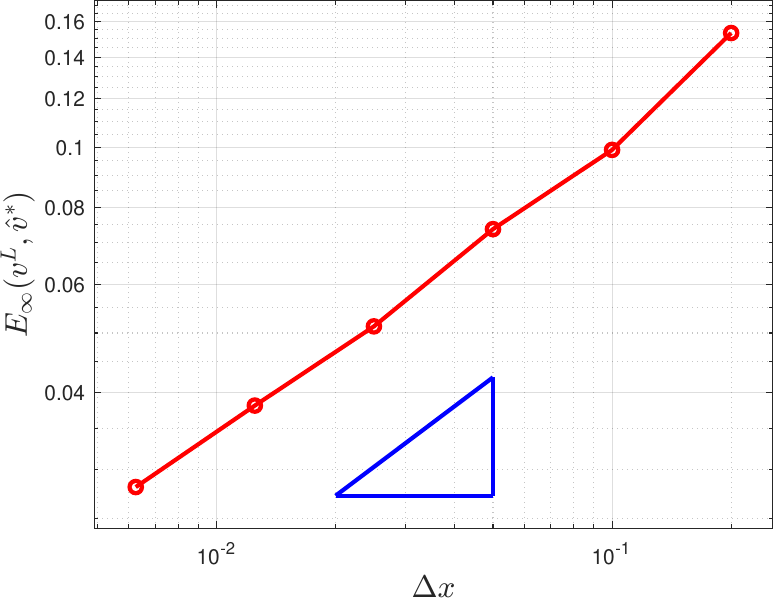}
     \caption{Example  \ref{test1} with $\Delta t=\sqrt{\Delta x}/2$. Convergence plots of $E_1(m^L,\hat m^*)$ (left) and of  $E_\infty(v^L,\hat v^*)$ (right)  in logarithm scale.}\label{fig: convergence rate sqrt}
    \end{figure}

 \begin{figure}[!t]
        \centering        
        \includegraphics[width=4.7cm]{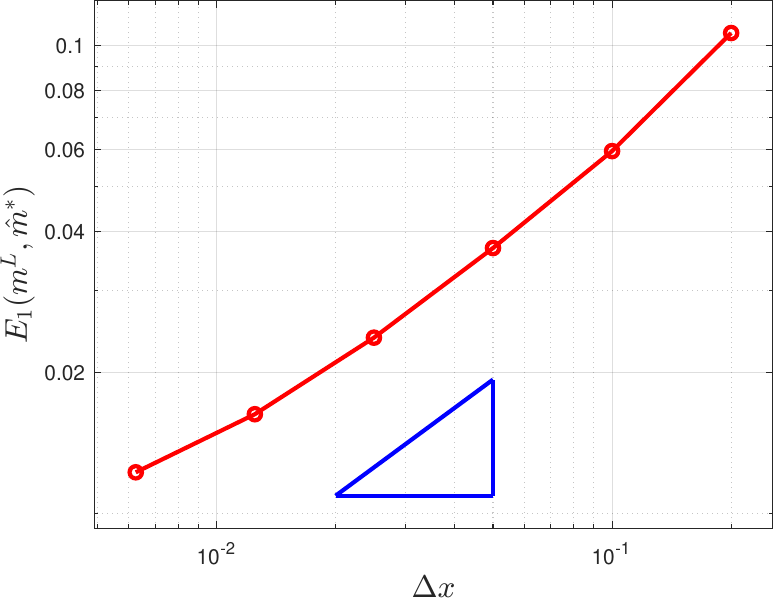}
        \hspace{5mm}
        \includegraphics[width=4.7cm]
        {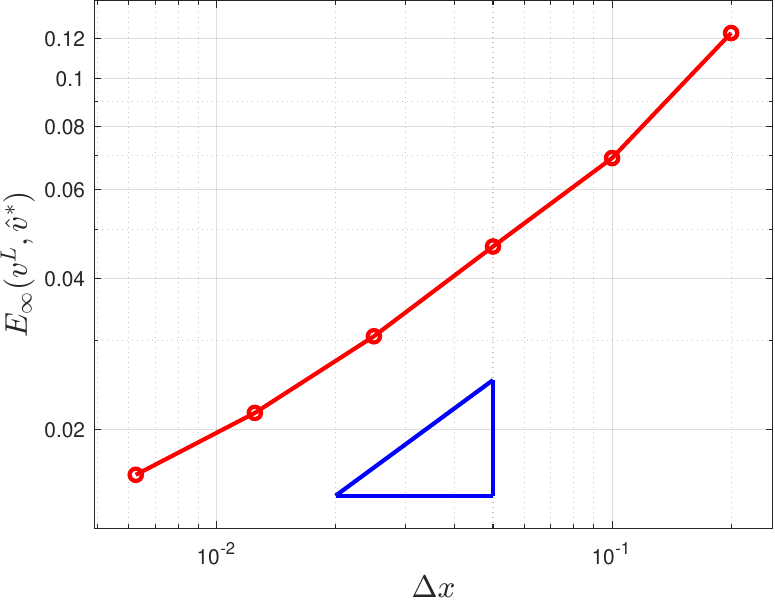}
     \caption{Example  \ref{test1} with $\Delta t=\Delta x^{2/3}/2$. Convergence plots  of $E_1(m^L,\hat m^*)$ (left) and of  $E_\infty(v^L,\hat v^*)$ (right) in logarithm scale.}\label{fig: convergence rate}
    \end{figure}

 \begin{figure}[!t]
        \centering        
        \includegraphics[width=4.7cm]{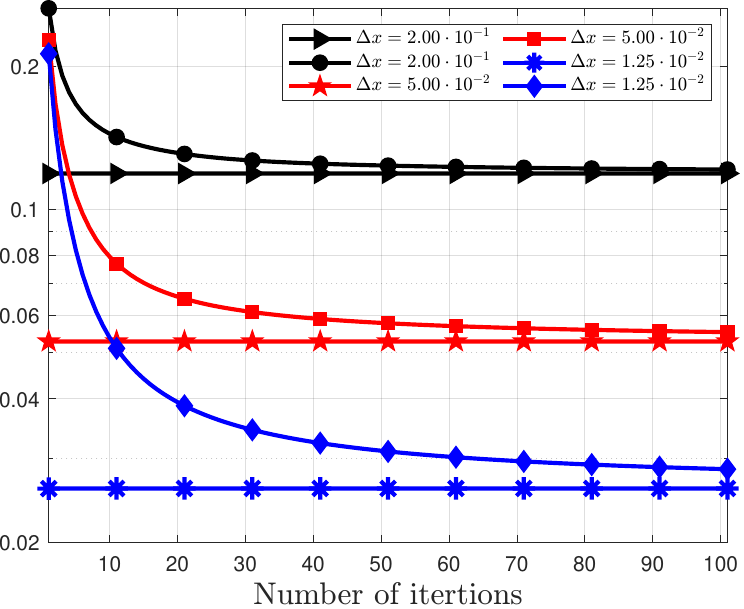}
        \hspace{5mm}
        \includegraphics[width=4.7cm]
        {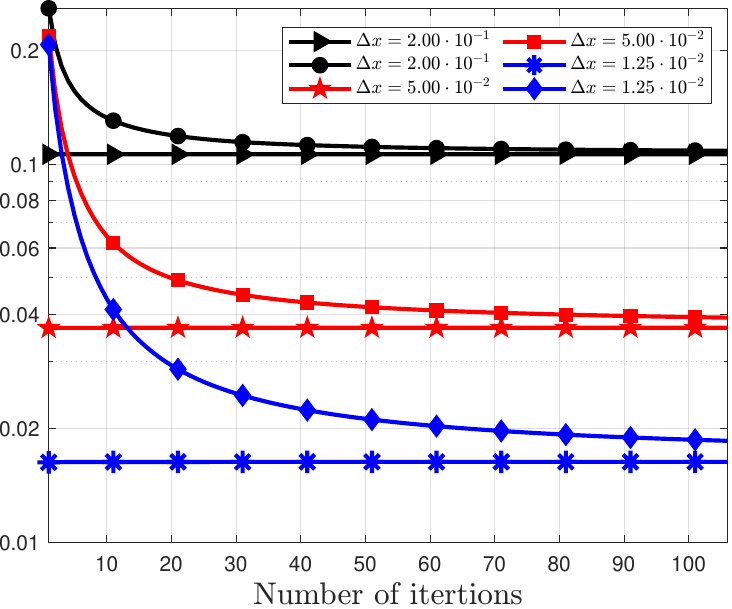}
     \caption{Example  \ref{test1} for $\Delta t=\sqrt{\Delta x}$ (left) and for $\Delta t=\Delta x^{2/3}/2$ (right). History of the errors between the current iterate and the exact solution. }\label{fig: convergence
history of error between current iterate and exact solution}
    \end{figure}
We analyze the convergence with respect the iterations   of the learning algorithm \textbf{DLVI} by computing  the difference between the current iterate $n$ and the exact solution, testing the algorithm for several spatial grid refinements while keeping the total number of iteration $N$ fixed.
Specifically, we set $N=101$ for $\Delta t=\sqrt{\Delta x}/2$ and $N=106$ for $\Delta t=\Delta x^{2/3}/2$. The corresponding results are presented in Figure \ref{fig: convergence history of error between current iterate and exact solution}, on the left and right, respectively. 
In Figure \ref{fig: convergence history of error between current iterate and exact solution}, the curves marked with triangles, stars, and asterisks illustrate the behavior of $E_1(m^{(n+1)}, \hat{m}^*)$ over the iterations $n \in \{1, \dots, N\}$ for $\Delta x = 2.00 \cdot 10^{-1}$, $\Delta x = 5.00 \cdot 10^{-2}$, and $\Delta x = 1.25 \cdot 10^{-2}$, respectively. Similarly, the curves marked with circles, squares, and diamonds represent the behavior of $E_1(\overline{m}^{(n+1)}, \hat{m}^* )$ for the same iterations and space step.
We observe that $E_1(m^{(n+1)}, \hat{m}^*)$ remains nearly constant, indicating that $m^{(n)}$ is already close to the discrete solution of the scheme \eqref{eq:MFGsdiscr1}   after a single  iteration. In contrast, $E_1(\overline{m}^{(n+1)}, \hat{m}^*)$ exhibits a decreasing trend, converging toward the plateau defined by $E_1(m^{(N)}, \hat{m}^*)$.  This behavior confirms that, due to the smoothing procedure, $\overline{m}^{(n)}$ also progressively converges toward to the discrete solution of the scheme \eqref{eq:MFGsdiscr1}, but at a slower rate. Moreover, Figure \ref{fig: convergence history of error between current iterate and exact solution} shows  that these plateau values decrease as $\Delta x$ decreases, 
 in agreement with the decreasing error trends observed in the left plots of Figure \ref{fig: convergence rate sqrt} and of Figure \ref{fig: convergence rate}.

Next, we proceed to compare \textbf{DLVI} and \textbf{ADLVI}. 
We first point out that, given the smoothness of the solution, in the third step of \textbf{DPI} within the \textbf{ADLVI} algorithm, the gradient is approximated using central finite differences, making convolution-based regularization unnecessary.
We implement  \textbf{ADLVI} with  $\Delta x_c=\Delta x_f=\Delta x$,  $\Delta t_c=\Delta t_f=\Delta t$, and 
$tol_c=tol_f=tol$. We compare \textbf{DLVI} and \textbf{ADLVI} across several refinements of the space grid by evaluating the errors $E_1(m^L,m^{ACC})$ and $E_\infty(v^L,v^{ACC})$, along with the computational time and the number of iterations. Table \ref{table: test with analytical solution sqrt} and  Table \ref{table: test with analytical solution} correspond to the time steps $\Delta t=\sqrt{\Delta x}/2$ and $\Delta t=\Delta x^{2/3}/2$, respectively. In both tables, we display the space grid step $\Delta x$, the errors $E_1(m^L,m^{ACC})$ and $E_\infty(v^L,v^{ACC})$, the number of iterations $n_L$ for \textbf{DLVI}, the pair $(n_c,n_f)$ for \textbf{ADLVI}, and the corresponding computational times. 
In both cases, \textbf{ADLVI} demonstrates a clear numerical advantage. Although both algorithms produce comparable solutions, the number of iterations $n_f$ required by \textbf{ADLVI} is smaller than the number of iterations $n_L$ needed by \textbf{DLVI}.
Furthermore, since the iterations $n_c$ involve a substantially lower computational cost, the overall runtime of \textbf{ADLVI} is reduced compared to that of \textbf{DLVI}.

\begin{table}[!t]
  \caption{Example  \ref{test1}  with $\Delta t=\sqrt{\Delta x}/2$. Space grid steps, errors, number iterations, and cpu times.}
        \renewcommand\arraystretch{1.1}
        \centering
        \begin{tabular}{lllllll}
                    \toprule
               $\Delta x$      & $ E_1(m^L,m^{ACC})$  & $E_\infty(v^L,v^{ACC})$  & $n_L$      &$(n_c,n_f)$ & time \textbf{DLVI} & time \textbf{ADLVI}\\ \midrule
        $2.00 \cdot 10^{-1}$ & $4.60 \cdot 10^{-5}$  &  $1.08 \cdot 10^{-4}$ & $101$   & $(20,19)$ & $4.83s$ & $0.927s$  
                \\ 
                $1.00 \cdot 10^{-1}$&    $4.63 \cdot 10^{-5}$  &   $1.01 \cdot 10^{-4}$  & $76$   & $(15,4)$ & $10.1s$ & $0.577s$ 
                \\ 
                  $5.00 \cdot 10^{-2}$  &  $3.85 \cdot 10^{-5}$  &  $8.20 \cdot 10^{-5}$  &  $56$ & $(20,8)$ & $19.4s$ & $2.91s$  
                \\ 
                $2.50 \cdot 10^{-2}$  &   $3.12 \cdot 10^{-5}$ &  $6.28 \cdot 10^{-5}$ &   $43$  & $(17,8)$ & $51.1s$ & $15.5s$  
                \\ 
                $1.25 \cdot 10^{-1}$  &  $2.66 \cdot 10^{-5}$  & $5.01 \cdot 10^{-5}$  &  $34$ & $(17,7)$ & $108s$ & $25.3s$  
                \\ 
                $6.25\cdot 10^{-3}$  & $2.26 \cdot 10^{-5}$ &  $4.23 \cdot 10^{-5}$ &$28$  & $(22,6)$ & $295s$ & $71.9s$        \\  \bottomrule
        \end{tabular}

        \label{table: test with analytical solution sqrt}
    \end{table}

\begin{table}[!t]
  \caption{Example  \ref{test1}  with $\Delta t=\Delta x^{2/3}/2$. Space grid steps, errors, number iterations, and cpu times.}
        \renewcommand\arraystretch{1.1}
        \centering
        \begin{tabular}{lllllll}
                    \toprule
               $\Delta x$      & $ E_1(m^L,m^{ACC})$   & $E_\infty(v^L,v^{ACC})$ & $n_L$      &$(n_c,n_f)$ & time \textbf{DLVI} & time \textbf{ADLVI}\\ \midrule
  $2.00 \cdot 10^{-1}$   & $5.65 \cdot 10^{-5}$ &$1.24 \cdot 10^{-4}$ &  $106$ & $(12,11)$ & $6.24s$ & $0.691s$  
                \\ 
                $1.00 \cdot 10^{-1}$  &  $5.28 \cdot 10^{-5}$ &  $1.14 \cdot 10^{-4}$  &  $79$ & $(9,9)$ & $15.1s$ & $1.86s$  
                \\ 
                  $5.00 \cdot 10^{-2}$ & $4.15 \cdot 10^{-5}$  &$8.48 \cdot 10^{-5}$  &  $58$ & $(8,8)$ & $34.0s$ & $4.82s$  
                \\ 
                $2.50 \cdot 10^{-2}$ & $3.25 \cdot 10^{-5}$  & $6.30 \cdot 10^{-5}$ & $44$  & $(8,6)$ & $91.1s$ & $12.6s$  
                \\ $1.25 \cdot 10^{-1}$  &  $2.86 \cdot 10^{-5}$ & $4.88 \cdot 10^{-5}$ &  $35$  & $(8,5)$ & $246s$ & $35.1s$  
                \\ 
                $6.25\cdot 10^{-3}$ &$2.21 \cdot 10^{-5}$ &  $4.06 \cdot 10^{-5}$ &  $29$ & $(8,4)$ & $755s$ & $104s$       \\ \bottomrule
        \end{tabular}

        \label{table: test with analytical solution}
    \end{table}

\subsection{Examples with a target position and aversion to crowded regions}\label{test2}
We consider two MFGs systems in dimensions $d=1$ and $d=2$ with state-independent and state-dependent Hamiltonians, respectively,  with coupling terms:  
\begin{equation*} 
F(x,\eta)= \lambda\min\{|x-\ov{x}|^{2},R\} +(r_{\sigma}*\eta)(x),\quad G(x,\eta)=0,
\end{equation*}
where $\lambda,
R,\sigma >0$, $\ov{x}\in\RR^d$, and $r_\sigma(x)=\frac{1}{(2\pi \sigma^2)^{d/2}}e^{-\frac{|x|}{2\sigma^2}}$.
We observe that these functions satisfy {\rm(H1)}, {\rm(H2)}, {\rm(H5)},  {\rm(H6)} and {\rm(H9)}. 
These tests are inspired by  similar examples in \cite{Lauriere21, CarliniSilvaZorkot}.
In this game, agents aim to reach the target point $\overline{x}$, driven by the first component of the cost function $F$, while simultaneously being adverse to the presence of other players, as penalized by the second component of $F$.
The following simulations show the influence of the positive parameter $\lambda$, which weights the objective of reaching the target point.
We consider as initial density
\begin{equation*} 
m^*_{0}(x)= \frac{1}{2\int_{\RR^d}m^*(x)\dd x} \left (m^*(x-1)+ m^*(x+1)\right),
\end{equation*} 
where 
\begin{equation*}
    m^*(x)=(1-4|x|^2)^3\chi_{\ov B(0,1/2)}(x).
\end{equation*}
 This initial density satisfies {\rm(H3)}, and describes a configuration with agents  distributed in two regions of radius $1/2$, as illustrated in Figure  \ref{fig:initial dencity}
 (left plot  for the case $d=1$,  central and  right plots for the case 
 $d=2$).
Moreover, we consider $\ov x=0$,  $\sigma=0.5$ and $R=9$. The problems are posed on the spatial domain $[-3,3]^d$, while the approximated densities are displayed over the restricted space domain $[-2,2]^d$ to focus on the region where  the density is effectively supported.
 \begin{figure}[!h]
        \centering        
        \includegraphics[width=4cm]{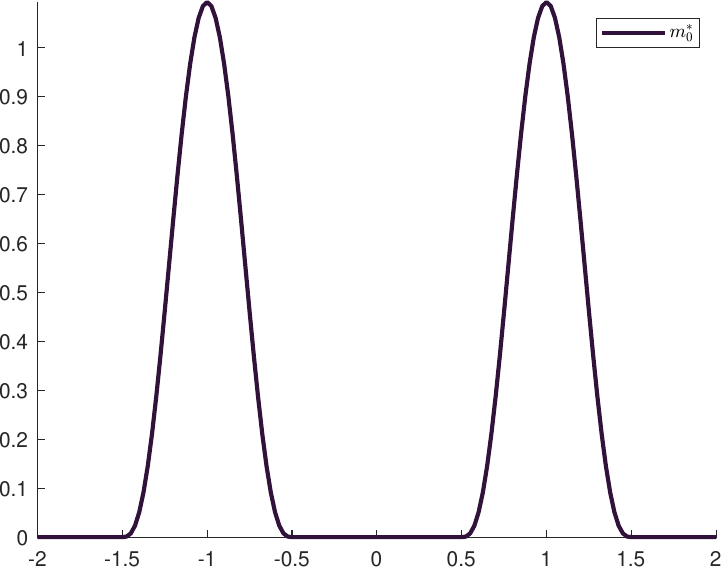}
        \includegraphics[width=4.7cm]{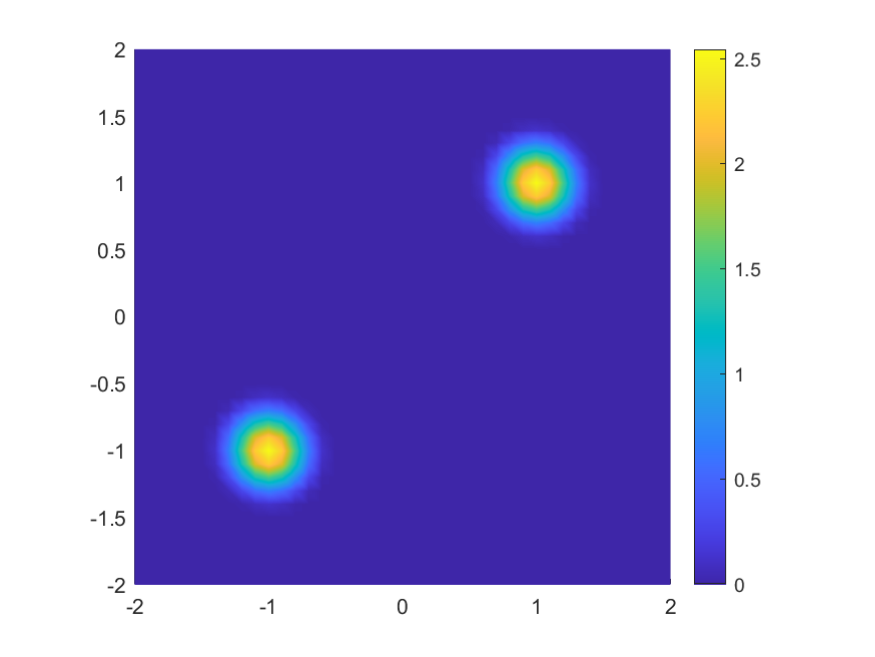}
        \includegraphics[width=4.7cm]{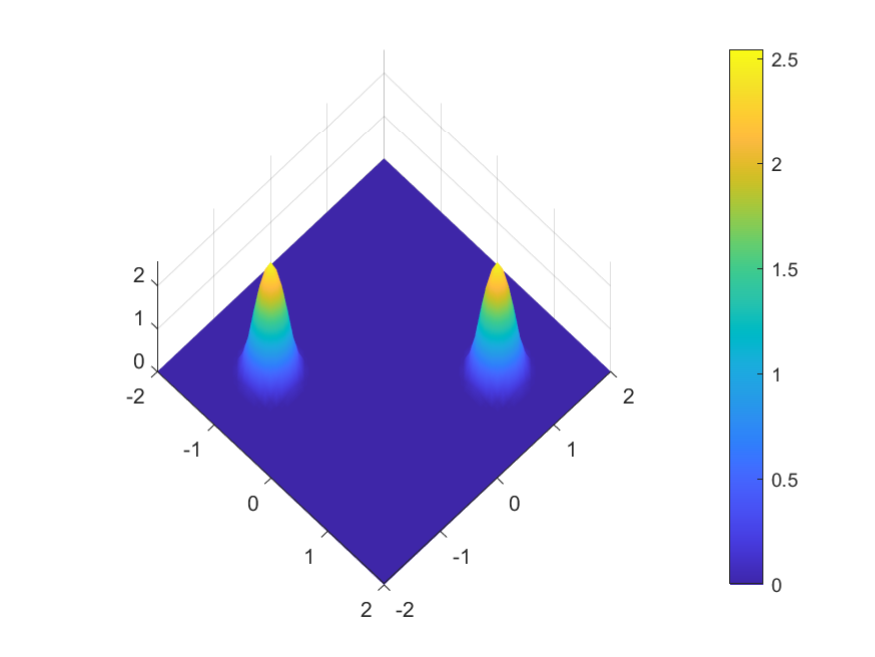} \caption{Example  \ref{test2}. Initial density for $d=1$ (left) and for $d=2$ (center and right). }\label{fig:initial dencity}
    \end{figure}    
\subsubsection{
Case $d=1$ with state-independent quadratic Hamiltonian
}\label{test2_1d}
In this example, we consider the above problem with quadratic Hamiltonian $H(x,p)=|p|^2/2$ and dimension $d=1$. 
In the following simulations, we set $T=4$, $\Delta x=2.5\cdot10^{-2}$ and $\Delta t=\Delta x^{2/3}/2$. We apply \textbf{DLVI} to solve the problem with $\lambda=2.5$ and $\lambda =0.8$, and we show the time evolution  of the density $m^L_{10\cdot k}$ for $k=0,\cdots,\lfloor N_T/10 \rfloor$ in Figure  \ref{fig: evolution 1d lambda25}, left and  right plots, respectively. In the first case,  the mass concentrates in the target point, since for $\lambda=2.5$ the term penalizing the distance to the target dominates over the aversion to crowded regions, as illustrated in  Figure  \ref{fig: evolution 1d lambda25} (left). In contrast,  Figure  \ref{fig: evolution 1d lambda25} (right) shows that  the mass does not concentrate in the target and the two parts of the density remain separate with lower peaks, since for $\lambda=0.8$ the aversion term has a dominant influence on the dynamics.  Using the same test settings, we apply \textbf{ADLVI} with $\Delta x_c=2\Delta x$,  $\Delta t_c=2\Delta t$,  $\epsilon_c=3\sqrt{\Delta t_c}$, 
$tol_c=tol_f=tol$, $\Delta x_f=\Delta x$,   $\Delta t_f=\Delta t$,  and we choose $\rho(x)=\frac{1}{\sqrt{2\pi}}e^{-|x|^2/2}$ as mollifier in the convolution. We compare  \textbf{ADLVI}
with  \textbf{DLVI}
by computing the errors
$E_1(m^L,m^{ACC})$ and $E_\infty(v^L,v^{ACC})$, as well as the computational time and number of iterations.
In  Table  \ref{table: test1d},  we report $E_1(m^L,m^{ACC})$, $E_\infty(v^L,v^{ACC})$, the number of iterations $n_L$ for \textbf{DLVI}, the pair $(n_c,n_f)$ for \textbf{ADLVI}, and the corresponding computational times. The first and second rows of the table correspond to the cases 
$\lambda=2.5$ and $\lambda=0.8$, respectively. In both cases, \textbf{ADLVI} is advantageous from a numerical point of view. In fact, although both algorithms yield comparable solutions,  $n_f$ is smaller than the number of iterations $n_L$ required by \textbf{DLVI} and,  thanks to the low cost of the $n_c$ iterations, the total computational time of \textbf{ADLVI} is lower than that of \textbf{DLVI}.

Finally, it is worth noting that the factor $1/(n+1)$ in step (iv) of \textbf{DLVI}, while ensuring the convergence of the algorithm, makes the
convergence slow in practice, as already observed in  example \ref{test1} Figure \ref{fig: convergence
history of error between current iterate and exact solution}. As a numerical attempt to accelerate the convergence, we introduce an accelerated variant, denoted as \textbf{ADLVI+}. The structure of \textbf{ADLVI+} is identical to that of \textbf{ADLVI}, except for the final step, where \textbf{DLVI} is executed on the fine mesh and the average density is updated according to
$\overline{m}^{(n+1)}_{i,k}=2/3\overline{m}^{(n)}_{i,k}+1/3{m}^{(n+1)}_{i,k}$. 
The weights $2/3$ and $1/3$ are chosen empirically through experimentation.
We denote by $(m^{ACC+}, v^{ACC+})$ the solution computed by \textbf{ADLVI+} and by $(n_c^+,n_f^+)$ the number of iterations performed by it.
We compare \textbf{DLVI} with \textbf{ADLVI+} by evaluating the errors $E_1(m^L, m^{ACC+})$ and $E_\infty(v^L, v^{ACC+})$, as well as the computational time and the number of iterations. The results are reported in Table \ref{table: test1d +}. As in the previous cases, \textbf{ADLVI+} achieves faster convergence, further reducing the CPU time.   Moreover, \textbf{ADLVI+}  reduces on average the CPU time by $97.55\%$, whereas the \textbf{ADLVI} reduces the CPU time by $75 .55\%$.
 \begin{figure}[!t]
        \centering 
            \includegraphics[width=4.5cm]{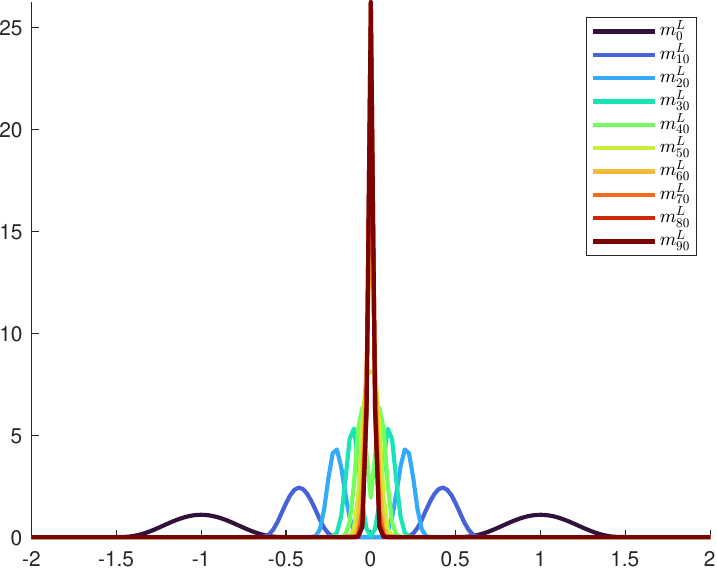}       
        \hspace{1cm}
            \includegraphics[width=4.5cm]{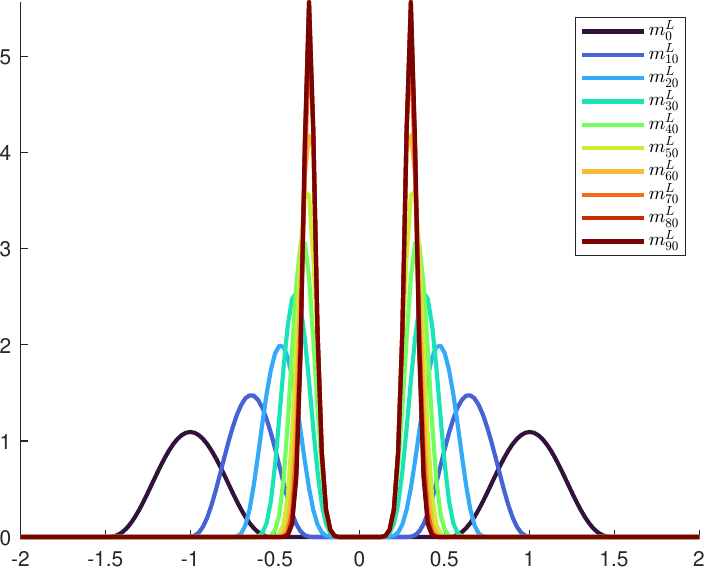}
        \caption{Example  \ref{test2_1d}. Time-evolution of the density for $\lambda=2.5$ (left) and for $\lambda=0.8$ (right).} \label{fig: evolution 1d lambda25}
\end{figure}

 \begin{table}[!t]
  \caption{Example  \ref{test2_1d}. Parameter $\lambda$, errors, number iterations, and cpu times.}
        \renewcommand\arraystretch{1.1}
        \centering
        \begin{tabular}{lllllll}
                    \toprule
               $\lambda $     & $ E_1(m^L,m^{ACC})$  & $E_\infty(v^L,v^{ACC})$   & $n_L$      &$(n_c,n_f)$ & time \textbf{DLVI} & time \textbf{ADLVI}\\ \midrule
              $2.5$  &  $1.70 \cdot 10^{-3}$   & $1.50 \cdot 10^{-3}$ & $1410$ & $(16,148)$ & $9.54 \cdot 10^{3}s$ &$1.00 \cdot 10^{3}s$      \\ 
              $0.8$ & $6.68 \cdot 10^{-4}$  &$5.41 \cdot 10^{-4}$ & $875$ & $(16,320)$ &  $4.97 \cdot 10^{3}s$  & $1.81 \cdot 10^{3}s$ \\ 
                 \bottomrule
        \end{tabular}
       
        \label{table: test1d}
    \end{table}
    
 \begin{table}[!t]
  \caption{Example  \ref{test2_1d}. Parameter $\lambda$, errors, number iterations, and cpu times.}
        \renewcommand\arraystretch{1.1}
        \centering
        \begin{tabular}{lllllll}
                    \toprule
               $\lambda $     & $ E_1(m^L,m^{ACC+})$  & $E_\infty(v^L,v^{ACC+})$   & $n_L$      &$(n_c^+,n_f^+)$ & time \textbf{DLVI} & time \textbf{ADLVI+}\\ \midrule
               $2.5$    &  $1.61 \cdot 10^{-3}$ & $1.38 \cdot 10^{-3}$  & $1410$ & $(16,22)$ & $9.54 \cdot 10^{3}s$ &$1.52\cdot 10^{2}s$      \\ 
              $0.8$  & $1.65 \cdot 10^{-3}$ &$8.41 \cdot 10^{-4}$ & $875$ & $(16,29)$ &  $4.97 \cdot 10^{3}s$  & $1.63 \cdot 10^{2}s$  \\ 
                 \bottomrule
        \end{tabular}
       
        \label{table: test1d +}
    \end{table}

\subsubsection{Case $d=2$ with state-dependent quadratic Hamiltonian}\label{test2_2d}
In this example, we consider the above problem in dimension $d=2$ and with  Hamiltonian $H(x,p)=|p|^2/2+b(x)\cdot p$, where 
\begin{equation*}
    b(x)=\gamma(-x_2,x_1), \qquad \text{for any }x=(x_1,x_2)\in\RR^2
\end{equation*}
is a rotating velocity field and $\gamma>0$. 
We observe that the Hamiltonian $H(x,p)$ does not satisfy all the required assumptions. However, the drift can be regularized by multiplying the linear term by a suitable mollifier, chosen so that the Hamiltonian remains unchanged within the selected spatial domain. 
We set $T=2.5$, $\gamma=2.5$, $\Delta x=1.00\cdot10^{-1}$ and $\Delta t=\Delta x^{2/3}/2$. As in the previous problem, we consider two  tests for two different values of $\lambda$. 
In the first test, we apply \textbf{DLVI}  with $\lambda=2$ and we show the time evolution  of the density $m^L_{ k}$ for $k=0,\cdots, N_T$, using a 2D representation,  on the left of Figure  \ref{fig: test2d_2D}. A 3D representation of the final density $m^L_{N_T}$, together with the contours of its time evolution, is displayed on the left of Figure  \ref{fig: test2d_3D}.
Analogously to the one-dimensional case, we observe that the mass concentrates at the target point, as the attraction toward the target outweighs the effect of crowd aversion for $\lambda=2$. However, in this setting, the two groups of agents approach the target while rotating clockwise, due to the influence of the rotational velocity field.  In the second test,
we apply  \textbf{DLVI}  with $\lambda =0.8$. We display on the right of Figure \ref{fig: test2d_2D} the time evolution of the density $m^L_{ k}$ for $k=0,\cdots, N_T$, using a 2D visualization. A 3D representation of the final density $m^L_{N_T}$, together with the contours of its time evolution, is displayed on the right of Figure  \ref{fig: test2d_3D}.
In this case, the agents still move toward the target while rotating clockwise. However, they do not reach the target and remain split into two separate groups, as the influence of crowd aversion prevails over the attraction to the target. 

We now evaluate the performance of \textbf{ADLVI} under the same conditions used in the previous tests  Using the same test settings, we apply \textbf{ADLVI} with  $\Delta x_c=\Delta x_f=\Delta x$,  $\Delta t_c=\Delta t_f=\Delta t$, $tol_f=tol_c=tol$,  $\epsilon_c=4\sqrt{\Delta t_c}$ and we choose $\rho(x)=\frac{1}{2\pi}e^{-|x|^2/2}$ as mollifier in the convolution. We compare  \textbf{ADLVI} with \textbf{DLVI}.
The results are summarized in  Table  \ref{table: test2d}, where we report the values of $E_1(m^L,m^{ACC})$, $E_\infty(v^L,v^{ACC})$, the number of iterations $n_L$ for \textbf{DLVI}, the pair $(n_c,n_f)$ for \textbf{ADLVI}, and the corresponding computational times for both methods. The first row refers to the case  $\lambda=2$ and the second to the case  $\lambda=0.8$. In both situations, \textbf{ADLVI} achieves better numerical efficiency. Although the two algorithms produce solutions of comparable accuracy,   $n_f$ is lower  than the number of iterations $n_L$ of \textbf{DLVI} and,  since  the $n_c$ iterations have a very low cost, the total computational time of \textbf{ADLVI} remains below that of \textbf{DLVI}.
\begin{figure}[!b]
        \centering     
        {
            \includegraphics[width=5.2cm]{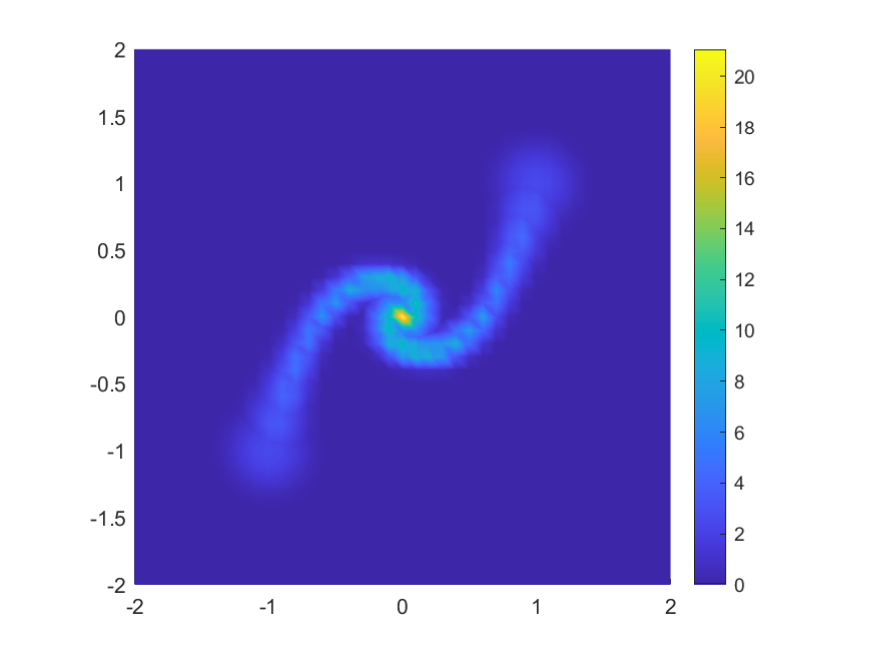}
        }       
        {
            \includegraphics[width=5.2cm]{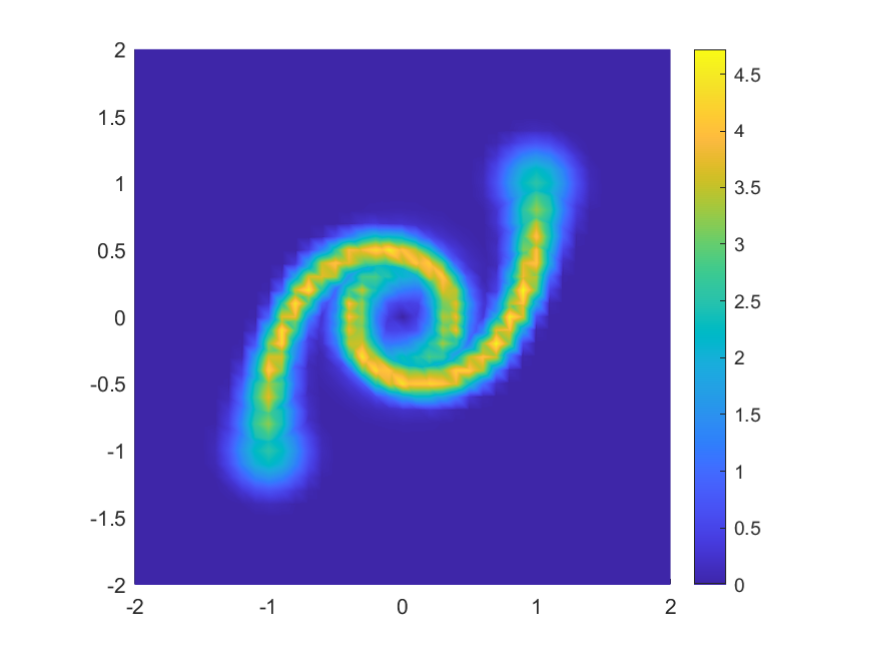
            }
        }
        \caption{Example  \ref{test2_2d}.  Time-evolution (2D view) of the density, $\lambda=2$ (left) and   $\lambda=0.8$ (right).  \label{fig: test2d_2D}}
    \end{figure}

\begin{figure}[!t]
       \centering
        {
            \includegraphics[width=7cm]{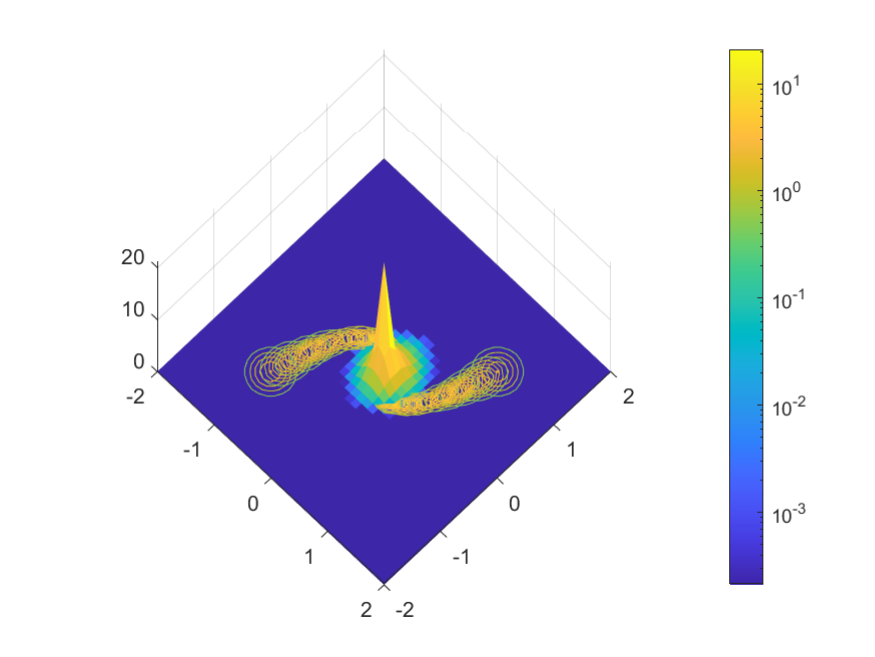}
        } 
        {
            \includegraphics[width=7cm]{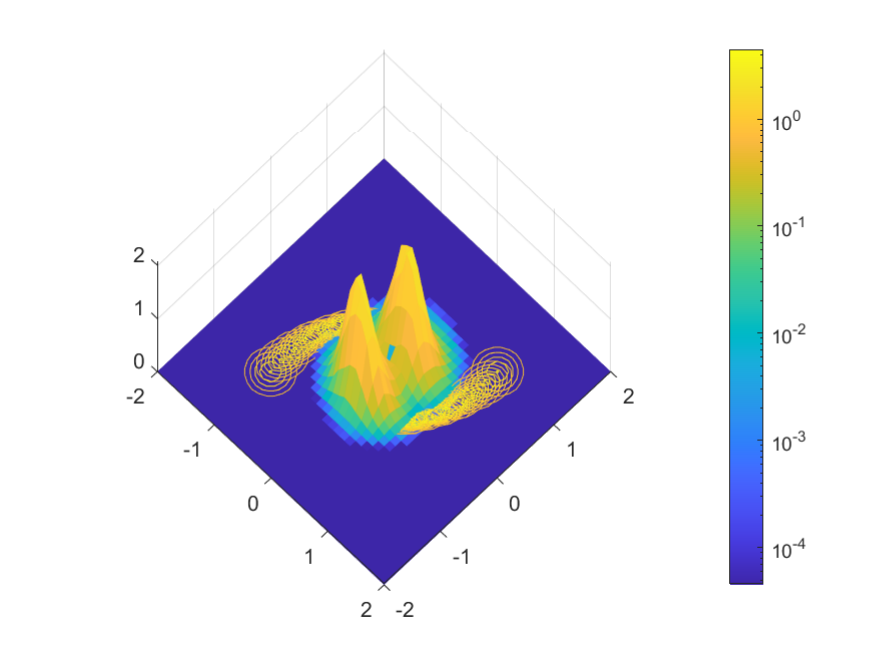}
        }
        \caption{ Example  \ref{test2_2d}.  Final  density (3D view), $\lambda=2$ (left) and   $\lambda=0.8$ (right).}\label{fig: test2d_3D}
    \end{figure}

   \begin{table}[!t]
 \caption{Example  \ref{test2_2d}. Parameter $\lambda$, errors, number iterations, and cpu times.}
        \renewcommand\arraystretch{1.1}
        \centering
        \begin{tabular}{lllllll}
                    \toprule
               $\lambda $     & $ E_1(m^L,m^{ACC})$  & $E_\infty(v^L,v^{ACC})$   & $n_L$      &$(n_c,n_f)$ & time \textbf{DLVI} & time \textbf{ADLVI}\\ \midrule
               $2$  & $7.06 \cdot 10^{-4}$   &    $1.26 \cdot 10^{-3}$ &$909$      &  $(16,435)$  &  $7.45\cdot 10^{3}s$         & $3.73\cdot 10^{3}s$\\ \hline
                 $0.8$ &  $1.34 \cdot 10^{-6}$&       $7.42 \cdot 10^{-4}$ & $712$  & $(13,386)$  & $5.89\cdot 10^{3}s$           & $3.35\cdot 10^{3}s$  \\ \bottomrule
        \end{tabular}
       
        \label{table: test2d}
    \end{table}

\appendix 

\section{Appendix}\label{Appendix}
\begin{proof}[Proof of \cref{Adelta properties}] 
Let $w=(m,v),\tilde w=(\tilde m,\tilde v),q,\tilde q, \mu ,\tilde \mu$ be as in the statement. Since $m$ and $\tilde m$ have compact support, there exists $\tilde C>0$ such that if $x_i\notin \ov B(0,\tilde C)$ then $m_{i,k}=\tilde m_{i,k}=0$ for any $k\in\It$. As a consequence, the sum over $i\in \Ix$ appearing in $ \langle  A_{\Delta }[\tilde w]-A_{\Delta }[ w], \tilde w-w \rangle_{\Delta }$, is effectively restricted to the finite set $ 
\{i\in\Ix | x_i\in  B(0,\tilde C+\sqrt{d}TC_H)\neq \emptyset \}$.
Combining this observation with \eqref{optimalrelaxedcontrolproperty}, we can write $ \langle  A_{\Delta }[\tilde w]-A_{\Delta }[ w], \tilde w-w \rangle_{\Delta }$ as follows 
\begin{equation*}
    \begin{aligned}
          \langle  A_{\Delta }[\tilde w]-A_{\Delta }[ w], \tilde w-w \rangle_{\Delta }&= \sum_{i\in \Ix} \big((\tilde v_{i,0}-v_{i,0})(m_{i,0}-\tilde m_{i,0}) +(v_{i,N_T}-\tilde v_{i,N_T})(m_{i,N_T}-\tilde m_{i,N_T}) \big)\\ 
         &+\Delta t\sum_{k\in\It^*}\sum_{i\in \Ix}\big(f(x_i,m_k)-f(x_i,\tilde m_k)\big)(m_{i,k}-\tilde m_{i,k})\\
        &+ \sum_{k\in \It^*} \sum_{i\in\Ix} \Big[
		   \int_{\RR^d}\Big(I_{\Delta x}[v_{k+1}](x_i - \Delta t a) + \Delta tL(x_i,a)\Big)\dd [\mu_{i,k}](a)\Big. \Big. \\
		& -\int_{\RR^d}\Big(I_{\Delta x}[\tilde{v}_{k+1}](x_i - \Delta t a) +  \Delta tL(x_i,a)\Big)\dd [\tilde \mu_{i,k}](a) \Big]  (m_{i,k} - \tilde{m}_{i,k})
		\\
		&+  \sum_{k\in \It^*} \sum_{i\in\Ix}\Big[  - \sum_{j\in\Ix}  m_{j,k}\int_{\RR^d}\beta_i(x_j-\Delta t a)\dd [\mu_{j,k}](a) \\ &  + \sum_{j\in\Ix} \tilde{m}_{j,k}\int_{\RR^d} \beta_i(x_j-\Delta t a)\dd[\tilde \mu_{j,k}](a)  \Big] (v_{i,k+1} - \tilde{v}_{i,k+1} ). \\
    \end{aligned}
\end{equation*}
Finally, \eqref{eq:equality Adelta} is proved by recalling  the definition of the interpolation operator \eqref{interpolation operator} and by observing that we can commute the summations, since they are finite. 
In addition,
using  \eqref{minproperty} and the fact that $m_{i,k},\tilde m _{i,k}\geq 0$ for any $(i,k)\in \Ixt$,  we obtain that 
 \begin{equation*}
        \begin{aligned}
           & \langle  A_\Delta[\tilde w]-A_\Delta[ w], \tilde w-w \rangle_{\Delta} \\
            &\geq  \sum_{i\in \Ix}\big((-v_{i,0}+\tilde v_{i,0})(m_{i,0}-\tilde m_{i,0}) +(v_{i,N_T}-\tilde v_{i,N_T})(m_{i,N_T}-\tilde m_{i,N_T}) \big)+\\
            &+\Delta t\sum_{k\in\It^*}\sum_{i\in \Ix}\big(f(x_i,m_k)-f(x_i,\tilde m_k)\big)(m_{i,k}-\tilde m_{i,k}).
        \end{aligned}
    \end{equation*}
If \cref{DiscreteMonotonicity} holds,  this  inequality implies \eqref{eq:  inequality Adelta discrmon}.

\end{proof}
\begin{lem}\label{Compact Lemma 2.4 CapuzzoDolcetta}
 Let  $\Omega\subset\RR^d$ be a compact space, $w\in\CC(\Omega)$ and suppose that $x^0$ is the strict minimum point of $w$ in $\Omega$. If  $w^n\in \CC(\Omega)$ converges  uniformly to $w$ in $\Omega$, then for any  $\{x^n\}$ such that
    \begin{equation*}
        w^n(x_n)\leq w^n(x) \quad \forall x \in   \Omega
        \end{equation*}
        (there exists at least one), the following holds true 
    \begin{equation*}
       \lim_{n\to \infty} x^{n}= x^0.
        \end{equation*}
\end{lem}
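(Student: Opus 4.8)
The plan is to argue by compactness and subsequences, exploiting that $x^0$ is a \emph{strict} minimizer of the limit function. First I would observe that each $x^n$ exists because $w^n\in\CC(\Omega)$ and $\Omega$ is compact, so $w^n$ attains its minimum on $\Omega$. Since $\Omega$ is compact, the sequence $\{x^n\}$ is bounded, and to prove $x^n\to x^0$ it suffices to show that every convergent subsequence of $\{x^n\}$ has limit $x^0$; then the usual subsequence principle (a bounded sequence in $\RR^d$, all of whose convergent subsequences share the same limit $x^0$, converges to $x^0$) gives the claim.

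Next I would fix an arbitrary convergent subsequence $x^{n_k}\to\bar x\in\Omega$ (such a subsequence exists by Bolzano--Weierstrass together with closedness of $\Omega$) and show $\bar x=x^0$. The key estimate is
\begin{equation*}
|w^{n_k}(x^{n_k})-w(\bar x)|\le \sup_{x\in\Omega}|w^{n_k}(x)-w(x)| + |w(x^{n_k})-w(\bar x)|,
\end{equation*}
whose right-hand side tends to $0$ by uniform convergence of $w^n$ to $w$ and by continuity of $w$ at $\bar x$; hence $w^{n_k}(x^{n_k})\to w(\bar x)$. On the other hand, the minimality of $x^{n_k}$ for $w^{n_k}$ gives $w^{n_k}(x^{n_k})\le w^{n_k}(x^0)$, and $w^{n_k}(x^0)\to w(x^0)$ again by uniform convergence. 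Passing to the limit yields $w(\bar x)\le w(x^0)$. Since $x^0$ is the strict minimum point of $w$ on $\Omega$, we have $w(x^0)\le w(\bar x)$ with equality only if $\bar x=x^0$; combining the two inequalities forces $\bar x=x^0$.

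There is no serious obstacle here: the argument is a routine combination of compactness, uniform convergence, and the strictness of the minimum. The only point requiring a little care is making explicit why it is enough to treat convergent subsequences — i.e.\ invoking the subsequence characterization of convergence for the bounded sequence $\{x^n\}\subset\Omega$ — and ensuring that the strictness hypothesis on $x^0$ is used precisely at the step where $w(\bar x)\le w(x^0)$ is upgraded to $\bar x=x^0$.
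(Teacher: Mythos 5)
Your proof is correct and follows essentially the same route as the paper, which does not write out an argument but simply refers to \cite[Lemma 2.4]{BardiDolcetta}: that classical proof is exactly this combination of compactness, uniform convergence of $w^n$ to $w$ along minimizing points, and the strictness of the minimum at $x^0$ to identify every subsequential limit, followed by the subsequence principle. No gaps: existence of the minimizers, the limit $w^{n_k}(x^{n_k})\to w(\bar x)$, the inequality $w(\bar x)\le w(x^0)$, and the upgrade to $\bar x=x^0$ are all justified as you state.
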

 \begin{lem}\label{Compact nostrict Lemma 2.4 CapuzzoDolcetta} 
      Let $\Omega\subset\RR^d$ be a compact space and $w\in \CC(\Omega)$. If $w_n\in \CC(\Omega)$ converges  uniformly to $w$ in $\Omega$, then for any sequence  $\{x_n\}_n$ such that
    \begin{equation*}
        w_n(x_n)\leq w_n(x) \quad \forall x \in   \Omega
        \end{equation*}
    (at least one exists) and, for any convergent subsequences $\{x_{n_k}\}_k$ (at least one exists), there exists a point $\tilde x \in \Omega$ such that
    \begin{equation*}
       \lim_{k\to \infty} x_{n_k}=\tilde x, \quad and  \quad w(\tilde x)\leq w(x) \quad \text{for any } x \in \Omega.
        \end{equation*}
\end{lem}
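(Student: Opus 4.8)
The plan is to argue in three short steps, all relying on the compactness of $\Omega$ together with the uniform convergence $w_n\to w$. First I would check that the objects in the statement are well defined: since each $w_n$ is continuous on the compact set $\Omega$, it attains its infimum, so at least one minimizer $x_n$ exists; and since $\{x_n\}\subset\Omega$ with $\Omega$ compact, the Bolzano--Weierstrass property guarantees that at least one convergent subsequence $\{x_{n_k}\}$ exists, with limit $\tilde x\in\Omega$ because $\Omega$ is closed.

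The core of the argument is to show that $\tilde x$ minimizes $w$ on $\Omega$. Fix an arbitrary $x\in\Omega$. By the defining property of $x_{n_k}$ we have $w_{n_k}(x_{n_k})\le w_{n_k}(x)$ for every $k$. On the right-hand side, uniform convergence gives $w_{n_k}(x)\to w(x)$. On the left-hand side, I would estimate
\[
|w_{n_k}(x_{n_k})-w(\tilde x)|\le \|w_{n_k}-w\|_{L^\infty(\Omega)}+|w(x_{n_k})-w(\tilde x)|,
\]
where the first term vanishes by uniform convergence and the second by continuity of $w$ combined with $x_{n_k}\to\tilde x$; hence $w_{n_k}(x_{n_k})\to w(\tilde x)$. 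Passing to the limit $k\to\infty$ in the inequality yields $w(\tilde x)\le w(x)$, and since $x\in\Omega$ was arbitrary, $\tilde x$ is a minimizer of $w$.

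I do not expect a genuine obstacle here; the only point requiring a little care is the passage to the limit in $w_{n_k}(x_{n_k})$, where one must simultaneously use the uniform convergence of $w_n$ (to replace $w_{n_k}$ by $w$ uniformly in the moving argument) and the continuity of the limit $w$ (to let that argument converge). Note also that, in contrast with \cref{Compact Lemma 2.4 CapuzzoDolcetta}, no strict-minimum hypothesis is assumed, so one cannot conclude convergence of the whole sequence $\{x_n\}$ — only that every convergent subsequence has a limit that is a (possibly non-unique) minimizer of $w$, which is exactly what the statement claims.
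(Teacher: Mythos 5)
Your proof is correct and is exactly the standard argument the paper invokes (it simply refers to \cite[Lemma 2.4]{BardiDolcetta} rather than writing it out): existence of minimizers and of a convergent subsequence by compactness, then passing to the limit in $w_{n_k}(x_{n_k})\le w_{n_k}(x)$ using uniform convergence together with the continuity of $w$. No gaps; the remark distinguishing this from the strict-minimum case of \cref{Compact Lemma 2.4 CapuzzoDolcetta} is also accurate.
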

\cref{Compact Lemma 2.4 CapuzzoDolcetta} and \cref{Compact nostrict Lemma 2.4 CapuzzoDolcetta} are proved as \cite[Lemma 2.4]{BardiDolcetta}.

\bibliographystyle{abbrv}   
\bibliography{referencerevised}

\end{document}